\documentclass{article}
\usepackage{graphicx} 
\usepackage{tikz}
\usepackage{circuitikz}
\usepackage{mathtools, stmaryrd}
\usepackage{amsfonts, amsmath, amssymb, amsthm}
\usepackage{color}

\title{Tasty Chocolate Games}
\author{Fran\c{c}ois Carret\thanks{ENS de Lyon\\ francois.carret@ens-lyon.fr} \and Koki Suetsugu\thanks{Toyo University\\ suetsugu.koki@gmail.com}}
\date{}

\newtheorem{theorem}{Theorem}
\newtheorem{lemma}{Lemma}
\newtheorem{definition}{Definition}

\newtheorem{remark}{Remark}

\newcommand{\MCHN}{\operatorname{MCHN}}
\newcommand{\CHN}{\operatorname{CHN}}
\newcommand{\CHCG}{\operatorname{CHCG}}
\newcommand{\DCG}{\operatorname{DCG}}
\newcommand{\CB}{\operatorname{ICG}}
\newcommand{\SCG}{\operatorname{RCG}}
\newcommand{\MRCG}{\operatorname{NIM}}

\begin{document}

\maketitle

\begin{abstract}

In this paper, we investigate Chocolate Games.
In Chocolate Games, each player cuts a chocolate bar into two chocolate bars and eats one of them so that they do not eat an indicated bitter block. 
 The player who eats the bitter part loses the game. This game can be considered as a generalization of Nim, and previous studies consider the condition that for what kind of the shape of chocolate bar, the Sprague--Grundy value of the position can be calculated by Nim--sum (XOR) of the width and height of the chocolate bar. Higher dimensional cases were also studied. 
 In this paper, we show that the ``Tasty condition,'' which was introduced in previous work for increasing staircase chocolate bars, can also be used for various other shapes of chocolate bars as a sufficient condition, or in some cases a necessary and sufficient condition, for the Sprague--Grundy value to be given by the Nim-sum of the sizes of the chocolate bar in each dimension.

\end{abstract}

\section{Introduction}


\subsection{Chocolate Games}
In this paper, we consider Chocolate Games.
In Chocolate Games, a chocolate bar is given as a position of the game. Some blocks of the chocolate bar is bitter. Each player, in their turn, cuts the chocolate bar into two bars along its groove and eats one of them. The player who eats the bitter part loses the game.

Chocolate Games are investigated in some previous studies like \cite{ MIF16, MNN20, MN21}.
One of the reasons why Chocolate Games are important is that Chocolate Games are generalizations of some well-known games. 
For example, some Chocolate Games can be considered as a position in Nim, which is one of the most well-known combinatorial games, solved in \cite{B01}.
In Nim, some piles of tokens are given as a position. Each player removes arbitrary many tokes from a pile. The player who moves last is the winner.
A position in Chocolate Game in which the chocolate bar is rectangular in shape and has a bitter block in one corner, like shown in Fig. \ref{fig:rcg57}, can be thought of as a two-pile Nim game since reducing the height or width corresponds to removing tokens from one of the piles. 
We assign coordinates to the blocks of the chocolate bar, with the bitter block at $(0,0)$. We call $(x+1) \times (y+1)$ rectangular chocolate bar $\SCG(x,y)$; a precise definition will be given later. Note that a single bitter block is $\SCG(0, 0)$.
Figures \ref{fig:hcutrcg} and \ref{fig:vcutrcg} show moves from $\SCG(7,5)$ by horizontal and vertical cuts.
\begin{figure}[!ht]
\centering
\resizebox{1\textwidth/4}{!}{%
\begin{circuitikz}
\tikzstyle{every node}=[font=\fontsize{18.2pt}{23.7pt}\selectfont]
\draw [color=white]  (-2,-1) rectangle (8,7);
\draw  (0,0) rectangle (1,1);
\draw  (1,0) rectangle (2,1);
\draw  (2,0) rectangle (3,1);
\draw  (3,0) rectangle (4,1);
\draw  (4,0) rectangle (5,1);
\draw  (5,0) rectangle (6,1);
\draw  (6,0) rectangle (7,1);
\draw  (-1,1) rectangle (0,2);
\draw  (0,1) rectangle (1,2);
\draw  (1,1) rectangle (2,2);
\draw  (2,1) rectangle (3,2);
\draw  (3,1) rectangle (4,2);
\draw  (4,1) rectangle (5,2);
\draw  (5,1) rectangle (6,2);
\draw  (6,1) rectangle (7,2);
\draw  (-1,2) rectangle (0,3);
\draw  (0,2) rectangle (1,3);
\draw  (1,2) rectangle (2,3);
\draw  (2,2) rectangle (3,3);
\draw  (3,2) rectangle (4,3);
\draw  (4,2) rectangle (5,3);
\draw  (5,2) rectangle (6,3);
\draw  (6,2) rectangle (7,3);
\draw  (-1,3) rectangle (0,4);
\draw  (0,3) rectangle (1,4);
\draw  (1,3) rectangle (2,4);
\draw  (2,3) rectangle (3,4);
\draw  (3,3) rectangle (4,4);
\draw  (4,3) rectangle (5,4);
\draw  (5,3) rectangle (6,4);
\draw  (6,3) rectangle (7,4);
\draw  (-1,4) rectangle (0,5);
\draw  (0,4) rectangle (1,5);
\draw  (1,4) rectangle (2,5);
\draw  (2,4) rectangle (3,5);
\draw  (3,4) rectangle (4,5);
\draw  (4,4) rectangle (5,5);
\draw  (5,4) rectangle (6,5);
\draw  (6,4) rectangle (7,5);
\draw  (-1,5) rectangle (0,6);
\draw  (0,5) rectangle (1,6);
\draw  (1,5) rectangle (2,6);
\draw  (2,5) rectangle (3,6);
\draw  (3,5) rectangle (4,6);
\draw  (4,5) rectangle (5,6);
\draw  (5,5) rectangle (6,6);
\draw  (6,5) rectangle (7,6);
\draw [ fill={rgb,255:red,0; green,0; blue,0}, fill opacity=1] (-1,0) rectangle (0,1);
\end{circuitikz}
}%
\caption{Representation of the chocolate bar $\SCG(7, 5)$}
\label{fig:rcg57}
\end{figure}

\begin{figure}[!ht]
\centering
\resizebox{1\textwidth/2}{!}{%
\begin{circuitikz}
\tikzstyle{every node}=[font=\fontsize{18.2pt}{23.7pt}\selectfont]
\draw [color=white]  (-2,-1) rectangle (8,7);
\draw  (0,0) rectangle (1,1);
\draw  (1,0) rectangle (2,1);
\draw  (2,0) rectangle (3,1);
\draw  (3,0) rectangle (4,1);
\draw  (4,0) rectangle (5,1);
\draw  (5,0) rectangle (6,1);
\draw  (6,0) rectangle (7,1);
\draw  (-1,1) rectangle (0,2);
\draw  (0,1) rectangle (1,2);
\draw  (1,1) rectangle (2,2);
\draw  (2,1) rectangle (3,2);
\draw  (3,1) rectangle (4,2);
\draw  (4,1) rectangle (5,2);
\draw  (5,1) rectangle (6,2);
\draw  (6,1) rectangle (7,2);
\draw  (-1,2) rectangle (0,3);
\draw  (0,2) rectangle (1,3);
\draw  (1,2) rectangle (2,3);
\draw  (2,2) rectangle (3,3);
\draw  (3,2) rectangle (4,3);
\draw  (4,2) rectangle (5,3);
\draw  (5,2) rectangle (6,3);
\draw  (6,2) rectangle (7,3);
\draw  (-1,3) rectangle (0,4);
\draw  (0,3) rectangle (1,4);
\draw  (1,3) rectangle (2,4);
\draw  (2,3) rectangle (3,4);
\draw  (3,3) rectangle (4,4);
\draw  (4,3) rectangle (5,4);
\draw  (5,3) rectangle (6,4);
\draw  (6,3) rectangle (7,4);
\draw  (-1,4) rectangle (0,5);
\draw  (0,4) rectangle (1,5);
\draw  (1,4) rectangle (2,5);
\draw  (2,4) rectangle (3,5);
\draw  (3,4) rectangle (4,5);
\draw  (4,4) rectangle (5,5);
\draw  (5,4) rectangle (6,5);
\draw  (6,4) rectangle (7,5);
\draw  (-1,5) rectangle (0,6);
\draw  (0,5) rectangle (1,6);
\draw  (1,5) rectangle (2,6);
\draw  (2,5) rectangle (3,6);
\draw  (3,5) rectangle (4,6);
\draw  (4,5) rectangle (5,6);
\draw  (5,5) rectangle (6,6);
\draw  (6,5) rectangle (7,6);
\draw [ fill={rgb,255:red,0; green,0; blue,0}, fill opacity=1] (-1,0) rectangle (0,1);
\draw [line width=5,color=red] (-2,4) -- (8,4);
\end{circuitikz}
\begin{circuitikz}
\draw [color=white]  (-2,-4) rectangle (2,4);
\draw [line width=5, double distance=6,
             arrows = {-Latex[length=0pt 3 0]}] (-2,0) -- (2,0);
\end{circuitikz}
\begin{circuitikz}
\tikzstyle{every node}=[font=\fontsize{18.2pt}{23.7pt}\selectfont]
\draw [color=white]  (-2,-1) rectangle (8,7);
\draw  (0,0) rectangle (1,1);
\draw  (1,0) rectangle (2,1);
\draw  (2,0) rectangle (3,1);
\draw  (3,0) rectangle (4,1);
\draw  (4,0) rectangle (5,1);
\draw  (5,0) rectangle (6,1);
\draw  (6,0) rectangle (7,1);
\draw  (-1,1) rectangle (0,2);
\draw  (0,1) rectangle (1,2);
\draw  (1,1) rectangle (2,2);
\draw  (2,1) rectangle (3,2);
\draw  (3,1) rectangle (4,2);
\draw  (4,1) rectangle (5,2);
\draw  (5,1) rectangle (6,2);
\draw  (6,1) rectangle (7,2);
\draw  (-1,2) rectangle (0,3);
\draw  (0,2) rectangle (1,3);
\draw  (1,2) rectangle (2,3);
\draw  (2,2) rectangle (3,3);
\draw  (3,2) rectangle (4,3);
\draw  (4,2) rectangle (5,3);
\draw  (5,2) rectangle (6,3);
\draw  (6,2) rectangle (7,3);
\draw  (-1,3) rectangle (0,4);
\draw  (0,3) rectangle (1,4);
\draw  (1,3) rectangle (2,4);
\draw  (2,3) rectangle (3,4);
\draw  (3,3) rectangle (4,4);
\draw  (4,3) rectangle (5,4);
\draw  (5,3) rectangle (6,4);
\draw  (6,3) rectangle (7,4);
\draw [ fill={rgb,255:red,0; green,0; blue,0}, fill opacity=1] (-1,0) rectangle (0,1);
\end{circuitikz}
}%
\caption{Representation of a move from $\SCG(7, 5)$}
\label{fig:hcutrcg}
\end{figure}

\begin{figure}[!ht]
\centering
\resizebox{1\textwidth/2}{!}{%
\begin{circuitikz}
\tikzstyle{every node}=[font=\fontsize{18.2pt}{23.7pt}\selectfont]
\draw [color=white]  (-2,-1) rectangle (8,7);
\draw  (0,0) rectangle (1,1);
\draw  (1,0) rectangle (2,1);
\draw  (2,0) rectangle (3,1);
\draw  (3,0) rectangle (4,1);
\draw  (4,0) rectangle (5,1);
\draw  (5,0) rectangle (6,1);
\draw  (6,0) rectangle (7,1);
\draw  (-1,1) rectangle (0,2);
\draw  (0,1) rectangle (1,2);
\draw  (1,1) rectangle (2,2);
\draw  (2,1) rectangle (3,2);
\draw  (3,1) rectangle (4,2);
\draw  (4,1) rectangle (5,2);
\draw  (5,1) rectangle (6,2);
\draw  (6,1) rectangle (7,2);
\draw  (-1,2) rectangle (0,3);
\draw  (0,2) rectangle (1,3);
\draw  (1,2) rectangle (2,3);
\draw  (2,2) rectangle (3,3);
\draw  (3,2) rectangle (4,3);
\draw  (4,2) rectangle (5,3);
\draw  (5,2) rectangle (6,3);
\draw  (6,2) rectangle (7,3);
\draw  (-1,3) rectangle (0,4);
\draw  (0,3) rectangle (1,4);
\draw  (1,3) rectangle (2,4);
\draw  (2,3) rectangle (3,4);
\draw  (3,3) rectangle (4,4);
\draw  (4,3) rectangle (5,4);
\draw  (5,3) rectangle (6,4);
\draw  (6,3) rectangle (7,4);
\draw  (-1,4) rectangle (0,5);
\draw  (0,4) rectangle (1,5);
\draw  (1,4) rectangle (2,5);
\draw  (2,4) rectangle (3,5);
\draw  (3,4) rectangle (4,5);
\draw  (4,4) rectangle (5,5);
\draw  (5,4) rectangle (6,5);
\draw  (6,4) rectangle (7,5);
\draw  (-1,5) rectangle (0,6);
\draw  (0,5) rectangle (1,6);
\draw  (1,5) rectangle (2,6);
\draw  (2,5) rectangle (3,6);
\draw  (3,5) rectangle (4,6);
\draw  (4,5) rectangle (5,6);
\draw  (5,5) rectangle (6,6);
\draw  (6,5) rectangle (7,6);
\draw [ fill={rgb,255:red,0; green,0; blue,0}, fill opacity=1] (-1,0) rectangle (0,1);
\draw [line width=5,color=red] (3,-1) -- (3,7);
\end{circuitikz}
\begin{circuitikz}
\draw [color=white]  (-2,-4) rectangle (2,4);
\draw [line width=5, double distance=6,
             arrows = {-Latex[length=0pt 3 0]}] (-2,0) -- (2,0);
\end{circuitikz}
\begin{circuitikz}
\tikzstyle{every node}=[font=\fontsize{18.2pt}{23.7pt}\selectfont]
\draw [color=white]  (-2,-1) rectangle (8,7);
\draw  (0,0) rectangle (1,1);
\draw  (1,0) rectangle (2,1);
\draw  (2,0) rectangle (3,1);
\draw  (-1,1) rectangle (0,2);
\draw  (0,1) rectangle (1,2);
\draw  (1,1) rectangle (2,2);
\draw  (2,1) rectangle (3,2);
\draw  (-1,2) rectangle (0,3);
\draw  (0,2) rectangle (1,3);
\draw  (1,2) rectangle (2,3);
\draw  (2,2) rectangle (3,3);
\draw  (-1,3) rectangle (0,4);
\draw  (0,3) rectangle (1,4);
\draw  (1,3) rectangle (2,4);
\draw  (2,3) rectangle (3,4);
\draw  (-1,4) rectangle (0,5);
\draw  (0,4) rectangle (1,5);
\draw  (1,4) rectangle (2,5);
\draw  (2,4) rectangle (3,5);
\draw  (-1,5) rectangle (0,6);
\draw  (0,5) rectangle (1,6);
\draw  (1,5) rectangle (2,6);
\draw  (2,5) rectangle (3,6);
\draw [ fill={rgb,255:red,0; green,0; blue,0}, fill opacity=1] (-1,0) rectangle (0,1);
\end{circuitikz}
}%
\caption{Representation of a move from $\SCG(5, 7)$}
\label{fig:vcutrcg}
\end{figure}

Moreover, generalized Chocolate Games in which the chocolate bar has an increasing staircase shape have also been studied. Figure \ref{fig:icg} is an example of such a chocolate bar. This chocolate bar is a part of $(7+1)\times(3+1)$ chocolate bar and  the height of each column is bounded by a function $f(i) = \left \lfloor \frac{i}{2} \right \rfloor$. We call such a chocolate bar $\CB(f, 7, 3)$; a precise definition will be given later. 
Figures \ref{fig:hcuticg} and \ref{fig:vcuticg} show two legal moves from this chocolate bar.

\begin{figure}[!ht]
\centering
\resizebox{1\textwidth/4}{!}{%
\begin{circuitikz}
\tikzstyle{every node}=[font=\fontsize{18.2pt}{23.7pt}\selectfont]
\draw [color=white]  (-2,-1) rectangle (8,7);
\draw  (0,0) rectangle (1,1);
\draw  (1,0) rectangle (2,1);
\draw  (2,0) rectangle (3,1);
\draw  (3,0) rectangle (4,1);
\draw  (4,0) rectangle (5,1);
\draw  (5,0) rectangle (6,1);
\draw  (6,0) rectangle (7,1);
\draw  (1,1) rectangle (2,2);
\draw  (2,1) rectangle (3,2);
\draw  (3,1) rectangle (4,2);
\draw  (4,1) rectangle (5,2);
\draw  (5,1) rectangle (6,2);
\draw  (6,1) rectangle (7,2);
\draw  (3,2) rectangle (4,3);
\draw  (4,2) rectangle (5,3);
\draw  (5,2) rectangle (6,3);
\draw  (6,2) rectangle (7,3);
\draw  (5,3) rectangle (6,4);
\draw  (6,3) rectangle (7,4);
\draw [ fill={rgb,255:red,0; green,0; blue,0}, fill opacity=1] (-1,0) rectangle (0,1);
\end{circuitikz}
}%
\caption{Representation of the chocolate bar $\CB(f, 7, 3)$ with $f(i)=\lfloor \frac{i}{2} \rfloor$ }
\label{fig:icg}
\end{figure}

\begin{figure}[!ht]
\centering
\resizebox{1\textwidth/2}{!}{%
\begin{circuitikz}
\tikzstyle{every node}=[font=\fontsize{18.2pt}{23.7pt}\selectfont]
\draw [color=white]  (-2,-1) rectangle (8,7);
\draw  (0,0) rectangle (1,1);
\draw  (1,0) rectangle (2,1);
\draw  (2,0) rectangle (3,1);
\draw  (3,0) rectangle (4,1);
\draw  (4,0) rectangle (5,1);
\draw  (5,0) rectangle (6,1);
\draw  (6,0) rectangle (7,1);
\draw  (1,1) rectangle (2,2);
\draw  (2,1) rectangle (3,2);
\draw  (3,1) rectangle (4,2);
\draw  (4,1) rectangle (5,2);
\draw  (5,1) rectangle (6,2);
\draw  (6,1) rectangle (7,2);
\draw  (3,2) rectangle (4,3);
\draw  (4,2) rectangle (5,3);
\draw  (5,2) rectangle (6,3);
\draw  (6,2) rectangle (7,3);
\draw  (5,3) rectangle (6,4);
\draw  (6,3) rectangle (7,4);
\draw [ fill={rgb,255:red,0; green,0; blue,0}, fill opacity=1] (-1,0) rectangle (0,1);
\draw [line width=5,color=red] (-2,3) -- (8,3);
\end{circuitikz}
\begin{circuitikz}
\draw [color=white]  (-2,-4) rectangle (2,4);
\draw [line width=5, double distance=6,
             arrows = {-Latex[length=0pt 3 0]}] (-2,0) -- (2,0);
\end{circuitikz}
\begin{circuitikz}
\tikzstyle{every node}=[font=\fontsize{18.2pt}{23.7pt}\selectfont]
\draw [color=white]  (-2,-1) rectangle (8,7);
\draw  (0,0) rectangle (1,1);
\draw  (1,0) rectangle (2,1);
\draw  (2,0) rectangle (3,1);
\draw  (3,0) rectangle (4,1);
\draw  (4,0) rectangle (5,1);
\draw  (5,0) rectangle (6,1);
\draw  (6,0) rectangle (7,1);
\draw  (1,1) rectangle (2,2);
\draw  (2,1) rectangle (3,2);
\draw  (3,1) rectangle (4,2);
\draw  (4,1) rectangle (5,2);
\draw  (5,1) rectangle (6,2);
\draw  (6,1) rectangle (7,2);
\draw  (3,2) rectangle (4,3);
\draw  (4,2) rectangle (5,3);
\draw  (5,2) rectangle (6,3);
\draw  (6,2) rectangle (7,3);
\draw [ fill={rgb,255:red,0; green,0; blue,0}, fill opacity=1] (-1,0) rectangle (0,1);
\end{circuitikz}
}%
\caption{Representation of a move from $\CB(f, 7, 3)$ with $f(i)=\lfloor \frac{i}{2} \rfloor$}
\label{fig:hcuticg}
\end{figure}

\begin{figure}[!ht]
\centering
\resizebox{1\textwidth/2}{!}{%
\begin{circuitikz}
\tikzstyle{every node}=[font=\fontsize{18.2pt}{23.7pt}\selectfont]
\draw [color=white]  (-2,-1) rectangle (8,7);
\draw  (0,0) rectangle (1,1);
\draw  (1,0) rectangle (2,1);
\draw  (2,0) rectangle (3,1);
\draw  (3,0) rectangle (4,1);
\draw  (4,0) rectangle (5,1);
\draw  (5,0) rectangle (6,1);
\draw  (6,0) rectangle (7,1);
\draw  (1,1) rectangle (2,2);
\draw  (2,1) rectangle (3,2);
\draw  (3,1) rectangle (4,2);
\draw  (4,1) rectangle (5,2);
\draw  (5,1) rectangle (6,2);
\draw  (6,1) rectangle (7,2);
\draw  (3,2) rectangle (4,3);
\draw  (4,2) rectangle (5,3);
\draw  (5,2) rectangle (6,3);
\draw  (6,2) rectangle (7,3);
\draw  (5,3) rectangle (6,4);
\draw  (6,3) rectangle (7,4);
\draw [ fill={rgb,255:red,0; green,0; blue,0}, fill opacity=1] (-1,0) rectangle (0,1);
\draw [line width=5,color=red] (3,-1) -- (3,7);
\end{circuitikz}
\begin{circuitikz}
\draw [color=white]  (-2,-4) rectangle (2,4);
\draw [line width=5, double distance=6,
             arrows = {-Latex[length=0pt 3 0]}] (-2,0) -- (2,0);
\end{circuitikz}
\begin{circuitikz}
\tikzstyle{every node}=[font=\fontsize{18.2pt}{23.7pt}\selectfont]
\draw [color=white]  (-2,-1) rectangle (8,7);
\draw  (0,0) rectangle (1,1);
\draw  (1,0) rectangle (2,1);
\draw  (2,0) rectangle (3,1);
\draw  (1,1) rectangle (2,2);
\draw  (2,1) rectangle (3,2);
\draw [ fill={rgb,255:red,0; green,0; blue,0}, fill opacity=1] (-1,0) rectangle (0,1);
\end{circuitikz}
}%
\caption{Representation of a move from $\CB(f, 7, 3)$ with $f(i)=\lfloor \frac{i}{2} \rfloor$}
\label{fig:vcuticg}
\end{figure}

We can use such Chocolate Games to express ``Nim with a pass'' under the standard normal play convention. A game with a pass is a game in which one of the players may pass exactly once during the play of the game. Once a pass has been made, neither player may make a pass again. In addition, a pass cannot be made from a terminal position.
For example, when we play one-pile Nim with a pass, one can choose to remove an arbitrary positive number of tokens from the pile or to make a pass on their turn if a pass has not been used, but cannot make any move from the terminal position. 

The Chocolate Game played on the chocolate bar shown in Fig. \ref{fig:passchoco} is isomorphic to one-pile Nim with a pass. Since one can choose to reduce the width of the bar as long as the width is greater than one, or to reduce the height from two to one if no one has reduced it yet, but cannot make any move if the width has been reduced to one since at the same time, even if the height has not been reduced, it is reduced to one and only the bitter block remains. In \cite{MN21}, three-dimensional Chocolate Games, like shown in Fig. \ref{fig:3d_passchoco}, are also considered as a generalization of two-pile Nim with a pass.

\begin{figure}[!ht]
\centering
\resizebox{1\textwidth/4}{!}{%
\begin{circuitikz}
\tikzstyle{every node}=[font=\fontsize{18.2pt}{23.7pt}\selectfont]
\draw  (0,0) rectangle (1,1);
\draw  (1,0) rectangle (2,1);
\draw  (2,0) rectangle (3,1);
\draw  (3,0) rectangle (4,1);
\draw  (4,0) rectangle (5,1);
\draw  (5,0) rectangle (6,1);
\draw  (6,0) rectangle (7,1);
\draw  (0,1) rectangle (1,2);
\draw  (1,1) rectangle (2,2);
\draw  (2,1) rectangle (3,2);
\draw  (3,1) rectangle (4,2);
\draw  (4,1) rectangle (5,2);
\draw  (5,1) rectangle (6,2);
\draw  (6,1) rectangle (7,2);
\draw [ fill={rgb,255:red,0; green,0; blue,0}, fill opacity=1] (-1,0) rectangle (0,1);
\end{circuitikz}
}%
\caption{Representation of a chocolate bar on which Chocolate Game is isomorphic to one-pile Nim with a pass}
\label{fig:passchoco}
\end{figure}

\begin{figure}[!ht]
    \centering
\resizebox{1\textwidth/4}{!}{%
\begin{tikzpicture}
	\draw [ fill={rgb,255:red,255; green,255; blue,255}, fill opacity=1] (0,1,0) -- (1,1,0) -- (1,1,1) -- (0,1,1) -- cycle;

	\draw [ fill={rgb,255:red,255; green,255; blue,255}, fill opacity=1] (1,1,0) -- (2,1,0) -- (2,1,1) -- (1,1,1) -- cycle;

	\draw [ fill={rgb,255:red,255; green,255; blue,255}, fill opacity=1] (2,1,0) -- (3,1,0) -- (3,1,1) -- (2,1,1) -- cycle;

	\draw[ fill={rgb,255:red,255; green,255; blue,255}, fill opacity=1] (3,1,0) -- (4,1,0) -- (4,1,1) -- (3,1,1) -- cycle;

    \draw[ fill={rgb,255:red,255; green,255; blue,255}, fill opacity=1] (5,0,0) -- (5,1,0) -- (5,1,1) -- (5,0,1) -- cycle;
	\draw[ fill={rgb,255:red,255; green,255; blue,255}, fill opacity=1] (4,1,0) -- (5,1,0) -- (5,1,1) -- (4,1,1) -- cycle;

	\draw[ fill={rgb,255:red,255; green,255; blue,255}, fill opacity=1] (0,1,1) -- (1,1,1) -- (1,1,2) -- (0,1,2) -- cycle;

	\draw[ fill={rgb,255:red,255; green,255; blue,255}, fill opacity=1] (1,1,1) -- (2,1,1) -- (2,1,2) -- (1,1,2) -- cycle;

	\draw[ fill={rgb,255:red,255; green,255; blue,255}, fill opacity=1] (2,1,1) -- (3,1,1) -- (3,1,2) -- (2,1,2) -- cycle;

	\draw[ fill={rgb,255:red,255; green,255; blue,255}, fill opacity=1] (3,1,1) -- (4,1,1) -- (4,1,2) -- (3,1,2) -- cycle;

    \draw[ fill={rgb,255:red,255; green,255; blue,255}, fill opacity=1] (5,0,1) -- (5,1,1) -- (5,1,2) -- (5,0,2) -- cycle;
	\draw[ fill={rgb,255:red,255; green,255; blue,255}, fill opacity=1] (4,1,1) -- (5,1,1) -- (5,1,2) -- (4,1,2) -- cycle;

	\draw[ fill={rgb,255:red,255; green,255; blue,255}, fill opacity=1] (0,1,2) -- (1,1,2) -- (1,1,3) -- (0,1,3) -- cycle;

	\draw[ fill={rgb,255:red,255; green,255; blue,255}, fill opacity=1] (1,1,2) -- (2,1,2) -- (2,1,3) -- (1,1,3) -- cycle;

	\draw[ fill={rgb,255:red,255; green,255; blue,255}, fill opacity=1] (2,1,2) -- (3,1,2) -- (3,1,3) -- (2,1,3) -- cycle;

	\draw[ fill={rgb,255:red,255; green,255; blue,255}, fill opacity=1] (3,1,2) -- (4,1,2) -- (4,1,3) -- (3,1,3) -- cycle;

    \draw[ fill={rgb,255:red,255; green,255; blue,255}, fill opacity=1] (5,0,2) -- (5,1,2) -- (5,1,3) -- (5,0,3) -- cycle;
	\draw[ fill={rgb,255:red,255; green,255; blue,255}, fill opacity=1] (4,1,2) -- (5,1,2) -- (5,1,3) -- (4,1,3) -- cycle;

	\draw[ fill={rgb,255:red,255; green,255; blue,255}, fill opacity=1] (0,1,3) -- (1,1,3) -- (1,1,4) -- (0,1,4) -- cycle;

	\draw[ fill={rgb,255:red,255; green,255; blue,255}, fill opacity=1] (1,1,3) -- (2,1,3) -- (2,1,4) -- (1,1,4) -- cycle;

	\draw[ fill={rgb,255:red,255; green,255; blue,255}, fill opacity=1] (2,1,3) -- (3,1,3) -- (3,1,4) -- (2,1,4) -- cycle;

	\draw[ fill={rgb,255:red,255; green,255; blue,255}, fill opacity=1] (3,1,3) -- (4,1,3) -- (4,1,4) -- (3,1,4) -- cycle;

    \draw[ fill={rgb,255:red,255; green,255; blue,255}, fill opacity=1] (5,0,3) -- (5,1,3) -- (5,1,4) -- (5,0,4) -- cycle;
	\draw[ fill={rgb,255:red,255; green,255; blue,255}, fill opacity=1] (4,1,3) -- (5,1,3) -- (5,1,4) -- (4,1,4) -- cycle;

	\draw[ fill={rgb,255:red,0; green,0; blue,0}, fill opacity=1] (0,1,4) -- (1,1,4) -- (1,1,5) -- (0,1,5) -- cycle;
	\draw[ fill={rgb,255:red,0; green,0; blue,0}, fill opacity=1] (0,0,5) -- (0,1,5) -- (1,1,5) -- (1,0,5) -- cycle;

	\draw[ fill={rgb,255:red,255; green,255; blue,255}, fill opacity=1] (1,1,4) -- (2,1,4) -- (2,1,5) -- (1,1,5) -- cycle;
	\draw[ fill={rgb,255:red,255; green,255; blue,255}, fill opacity=1] (1,0,5) -- (1,1,5) -- (2,1,5) -- (2,0,5) -- cycle;

	\draw[ fill={rgb,255:red,255; green,255; blue,255}, fill opacity=1] (2,1,4) -- (3,1,4) -- (3,1,5) -- (2,1,5) -- cycle;
	\draw[ fill={rgb,255:red,255; green,255; blue,255}, fill opacity=1] (2,0,5) -- (2,1,5) -- (3,1,5) -- (3,0,5) -- cycle;

	\draw[ fill={rgb,255:red,255; green,255; blue,255}, fill opacity=1] (3,1,4) -- (4,1,4) -- (4,1,5) -- (3,1,5) -- cycle;
	\draw[ fill={rgb,255:red,255; green,255; blue,255}, fill opacity=1] (3,0,5) -- (3,1,5) -- (4,1,5) -- (4,0,5) -- cycle;

    \draw[ fill={rgb,255:red,255; green,255; blue,255}, fill opacity=1] (5,0,4) -- (5,1,4) -- (5,1,5) -- (5,0,5) -- cycle;
	\draw[ fill={rgb,255:red,255; green,255; blue,255}, fill opacity=1] (4,1,4) -- (5,1,4) -- (5,1,5) -- (4,1,5) -- cycle;
	\draw[ fill={rgb,255:red,255; green,255; blue,255}, fill opacity=1] (4,0,5) -- (4,1,5) -- (5,1,5) -- (5,0,5) -- cycle;

	\draw[ fill={rgb,255:red,255; green,255; blue,255}, fill opacity=1](0,2,0) -- (1,2,0) -- (1,2,1) -- (0,2,1) -- cycle;

	\draw[ fill={rgb,255:red,255; green,255; blue,255}, fill opacity=1](1,2,0) -- (2,2,0) -- (2,2,1) -- (1,2,1) -- cycle;

	\draw [ fill={rgb,255:red,255; green,255; blue,255}, fill opacity=1] (2,2,0) -- (3,2,0) -- (3,2,1) -- (2,2,1) -- cycle;

	\draw[ fill={rgb,255:red,255; green,255; blue,255}, fill opacity=1] (3,2,0) -- (4,2,0) -- (4,2,1) -- (3,2,1) -- cycle;

    \draw[ fill={rgb,255:red,255; green,255; blue,255}, fill opacity=1] (5,1,0) -- (5,2,0) -- (5,2,1) -- (5,1,1) -- cycle;
	\draw[ fill={rgb,255:red,255; green,255; blue,255}, fill opacity=1] (4,2,0) -- (5,2,0) -- (5,2,1) -- (4,2,1) -- cycle;

	\draw[ fill={rgb,255:red,255; green,255; blue,255}, fill opacity=1] (0,2,1) -- (2,2,1) -- (2,2,2) -- (0,2,2) -- cycle;
    
	\draw[ fill={rgb,255:red,255; green,255; blue,255}, fill opacity=1] (1,2,1) -- (2,2,1) -- (2,2,2) -- (1,2,2) -- cycle;
	\draw[ fill={rgb,255:red,255; green,255; blue,255}, fill opacity=1] (1,1,2) -- (1,2,2) -- (2,2,2) -- (2,1,2) -- cycle;

	\draw[ fill={rgb,255:red,255; green,255; blue,255}, fill opacity=1] (2,2,1) -- (3,2,1) -- (3,2,2) -- (2,2,2) -- cycle;

	\draw[ fill={rgb,255:red,255; green,255; blue,255}, fill opacity=1] (3,2,1) -- (4,2,1) -- (4,2,2) -- (3,2,2) -- cycle;

    \draw[ fill={rgb,255:red,255; green,255; blue,255}, fill opacity=1] (5,1,1) -- (5,2,1) -- (5,2,2) -- (5,1,2) -- cycle;
	\draw[ fill={rgb,255:red,255; green,255; blue,255}, fill opacity=1] (4,2,1) -- (5,2,1) -- (5,2,2) -- (4,2,2) -- cycle;

	\draw[ fill={rgb,255:red,255; green,255; blue,255}, fill opacity=1] (0,2,2) -- (1,2,2) -- (1,2,3) -- (0,2,3) -- cycle;
    
	\draw[ fill={rgb,255:red,255; green,255; blue,255}, fill opacity=1] (1,2,2) -- (2,2,2) -- (2,2,3) -- (1,2,3) -- cycle;

	\draw[ fill={rgb,255:red,255; green,255; blue,255}, fill opacity=1] (2,2,2) -- (3,2,2) -- (3,2,3) -- (2,2,3) -- cycle;

	\draw[ fill={rgb,255:red,255; green,255; blue,255}, fill opacity=1] (3,2,2) -- (4,2,2) -- (4,2,3) -- (3,2,3) -- cycle;

    \draw[ fill={rgb,255:red,255; green,255; blue,255}, fill opacity=1] (5,1,2) -- (5,2,2) -- (5,2,3) -- (5,1,3) -- cycle;
	\draw[ fill={rgb,255:red,255; green,255; blue,255}, fill opacity=1] (4,2,2) -- (5,2,2) -- (5,2,3) -- (4,2,3) -- cycle;

	\draw[ fill={rgb,255:red,255; green,255; blue,255}, fill opacity=1] (0,2,3) -- (1,2,3) -- (1,2,4) -- (0,2,4) -- cycle;
	\draw[ fill={rgb,255:red,255; green,255; blue,255}, fill opacity=1] (0,1,4) -- (0,2,4) -- (1,2,4) -- (1,1,4) -- cycle;

	\draw[ fill={rgb,255:red,255; green,255; blue,255}, fill opacity=1] (1,2,3) -- (2,2,3) -- (2,2,4) -- (1,2,4) -- cycle;
	\draw[ fill={rgb,255:red,255; green,255; blue,255}, fill opacity=1] (1,1,4) -- (1,2,4) -- (2,2,4) -- (2,1,4) -- cycle;

	\draw[ fill={rgb,255:red,255; green,255; blue,255}, fill opacity=1] (2,2,3) -- (3,2,3) -- (3,2,4) -- (2,2,4) -- cycle;
	\draw[ fill={rgb,255:red,255; green,255; blue,255}, fill opacity=1] (2,1,4) -- (2,2,4) -- (3,2,4) -- (3,1,4) -- cycle;

	\draw[ fill={rgb,255:red,255; green,255; blue,255}, fill opacity=1] (3,2,3) -- (4,2,3) -- (4,2,4) -- (3,2,4) -- cycle;

    \draw[ fill={rgb,255:red,255; green,255; blue,255}, fill opacity=1] (5,1,3) -- (5,2,3) -- (5,2,4) -- (5,1,4) -- cycle;
	\draw[ fill={rgb,255:red,255; green,255; blue,255}, fill opacity=1] (4,2,3) -- (5,2,3) -- (5,2,4) -- (4,2,4) -- cycle;

	\draw[ fill={rgb,255:red,255; green,255; blue,255}, fill opacity=1] (1,2,4) -- (2,2,4) -- (2,2,5) -- (1,2,5) -- cycle;
	\draw[ fill={rgb,255:red,255; green,255; blue,255}, fill opacity=1] (1,1,5) -- (1,2,5) -- (2,2,5) -- (2,1,5) -- cycle;

	\draw[ fill={rgb,255:red,255; green,255; blue,255}, fill opacity=1] (2,2,4) -- (3,2,4) -- (3,2,5) -- (2,2,5) -- cycle;
	\draw[ fill={rgb,255:red,255; green,255; blue,255}, fill opacity=1] (2,1,5) -- (2,2,5) -- (3,2,5) -- (3,1,5) -- cycle;

	\draw[ fill={rgb,255:red,255; green,255; blue,255}, fill opacity=1] (3,2,4) -- (4,2,4) -- (4,2,5) -- (3,2,5) -- cycle;
	\draw[ fill={rgb,255:red,255; green,255; blue,255}, fill opacity=1] (3,1,5) -- (3,2,5) -- (4,2,5) -- (4,1,5) -- cycle;

    \draw[ fill={rgb,255:red,255; green,255; blue,255}, fill opacity=1] (5,1,4) -- (5,2,4) -- (5,2,5) -- (5,1,5) -- cycle;
	\draw[ fill={rgb,255:red,255; green,255; blue,255}, fill opacity=1] (4,2,4) -- (5,2,4) -- (5,2,5) -- (4,2,5) -- cycle;
	\draw[ fill={rgb,255:red,255; green,255; blue,255}, fill opacity=1] (4,1,5) -- (4,2,5) -- (5,2,5) -- (5,1,5) -- cycle;

\end{tikzpicture}
}%
    \caption{Representation of a three--dimensional chocolate bar on which Chocolate Game is isomorphic to two--pile Nim with a pass}
    \label{fig:3d_passchoco}
\end{figure}

Considering them, Chocolate Games on increasing staircase chocolate bars can be considered as a generalization of Nim with a pass.

Inspired by there results, in this paper, we consider Chocolate Games on decreasing staircase chocolate bars like shown as Fig. \ref{fig:dcg}. 
This chocolate bar is a part of $(7+1)\times(4+1)$ chocolate bar and  the height of each column is bounded by a function $f(i) = \max(\lfloor \frac{8-i}{2} \rfloor,0)$. We call such a chocolate bar $\DCG(f, 7, 4)$; a precise definition will be given later.

\begin{figure}[!ht]
\centering
\resizebox{1\textwidth/4}{!}{%
\begin{circuitikz}
\tikzstyle{every node}=[font=\fontsize{18.2pt}{23.7pt}\selectfont]
\draw [color=white]  (-2,-1) rectangle (8,7);
\draw  (0,0) rectangle (1,1);
\draw  (1,0) rectangle (2,1);
\draw  (2,0) rectangle (3,1);
\draw  (3,0) rectangle (4,1);
\draw  (4,0) rectangle (5,1);
\draw  (5,0) rectangle (6,1);
\draw  (6,0) rectangle (7,1);
\draw  (-1,1) rectangle (0,2);
\draw  (0,1) rectangle (1,2);
\draw  (1,1) rectangle (2,2);
\draw  (2,1) rectangle (3,2);
\draw  (3,1) rectangle (4,2);
\draw  (4,1) rectangle (5,2);
\draw  (5,1) rectangle (6,2);
\draw  (-1,2) rectangle (0,3);
\draw  (0,2) rectangle (1,3);
\draw  (1,2) rectangle (2,3);
\draw  (2,2) rectangle (3,3);
\draw  (3,2) rectangle (4,3);
\draw  (-1,3) rectangle (0,4);
\draw  (0,3) rectangle (1,4);
\draw  (1,3) rectangle (2,4);
\draw  (-1,4) rectangle (0,5);
\draw [ fill={rgb,255:red,0; green,0; blue,0}, fill opacity=1] (-1,0) rectangle (0,1);
\end{circuitikz}
}%
\caption{Representation of the chocolate bar $\DCG(f, 7, 4)$ with $f(i)=\max(\lfloor \frac{8-i}{2} \rfloor,0)$ }
\label{fig:dcg}
\end{figure}


For decreasing staircase chocolate bars, we impose the rule that at least one square must be removed from the rightmost column on each turn. Note that, for increasing chocolate bars, at least one square is always removed from the rightmost column on each turn even without this additional rule. 
Figures \ref{fig:vcutdcg} and \ref{fig:hcutdcg} show legal moves and Fig. \ref{fig:illegalcutdcg} shows an illegal move since no square on the rightmost column is removed.

\begin{figure}[!ht]
\centering
\resizebox{1\textwidth/2}{!}{%
\begin{circuitikz}
\tikzstyle{every node}=[font=\fontsize{18.2pt}{23.7pt}\selectfont]
\draw [color=white]  (-2,-1) rectangle (8,7);
\draw  (0,0) rectangle (1,1);
\draw  (1,0) rectangle (2,1);
\draw  (2,0) rectangle (3,1);
\draw  (3,0) rectangle (4,1);
\draw  (4,0) rectangle (5,1);
\draw  (5,0) rectangle (6,1);
\draw  (6,0) rectangle (7,1);
\draw  (-1,1) rectangle (0,2);
\draw  (0,1) rectangle (1,2);
\draw  (1,1) rectangle (2,2);
\draw  (2,1) rectangle (3,2);
\draw  (3,1) rectangle (4,2);
\draw  (4,1) rectangle (5,2);
\draw  (5,1) rectangle (6,2);
\draw  (-1,2) rectangle (0,3);
\draw  (0,2) rectangle (1,3);
\draw  (1,2) rectangle (2,3);
\draw  (2,2) rectangle (3,3);
\draw  (3,2) rectangle (4,3);
\draw  (-1,3) rectangle (0,4);
\draw  (0,3) rectangle (1,4);
\draw  (1,3) rectangle (2,4);
\draw  (-1,4) rectangle (0,5);
\draw [ fill={rgb,255:red,0; green,0; blue,0}, fill opacity=1] (-1,0) rectangle (0,1);
\draw [line width=5,color=red] (3,-1) -- (3,7);
\end{circuitikz}
\begin{circuitikz}
\draw [color=white]  (-2,-4) rectangle (2,4);
\draw [line width=5, double distance=6,
             arrows = {-Latex[length=0pt 3 0]}] (-2,0) -- (2,0);
\end{circuitikz}
\begin{circuitikz}
\tikzstyle{every node}=[font=\fontsize{18.2pt}{23.7pt}\selectfont]
\draw [color=white]  (-2,-1) rectangle (8,7);
\draw  (0,0) rectangle (1,1);
\draw  (1,0) rectangle (2,1);
\draw  (2,0) rectangle (3,1);
\draw  (-1,1) rectangle (0,2);
\draw  (0,1) rectangle (1,2);
\draw  (1,1) rectangle (2,2);
\draw  (2,1) rectangle (3,2);
\draw  (-1,2) rectangle (0,3);
\draw  (0,2) rectangle (1,3);
\draw  (1,2) rectangle (2,3);
\draw  (2,2) rectangle (3,3);
\draw  (-1,3) rectangle (0,4);
\draw  (0,3) rectangle (1,4);
\draw  (1,3) rectangle (2,4);
\draw  (-1,4) rectangle (0,5);
\draw [ fill={rgb,255:red,0; green,0; blue,0}, fill opacity=1] (-1,0) rectangle (0,1);
\end{circuitikz}
}%
\caption{Representation of a move from $\DCG(f, 7, 4)$ with $f(i)=\max(\lfloor \frac{8-i}{2} \rfloor,0)$}
\label{fig:vcutdcg}
\end{figure}

\begin{figure}[!ht]
\centering
\resizebox{1\textwidth/2}{!}{%
\begin{circuitikz}
\tikzstyle{every node}=[font=\fontsize{18.2pt}{23.7pt}\selectfont]
\draw [color=white]  (-2,-1) rectangle (8,7);
\draw  (0,0) rectangle (1,1);
\draw  (1,0) rectangle (2,1);
\draw  (2,0) rectangle (3,1);
\draw  (3,0) rectangle (4,1);
\draw  (4,0) rectangle (5,1);
\draw  (-1,1) rectangle (0,2);
\draw  (0,1) rectangle (1,2);
\draw  (1,1) rectangle (2,2);
\draw  (2,1) rectangle (3,2);
\draw  (3,1) rectangle (4,2);
\draw  (4,1) rectangle (5,2);
\draw  (-1,2) rectangle (0,3);
\draw  (0,2) rectangle (1,3);
\draw  (1,2) rectangle (2,3);
\draw  (2,2) rectangle (3,3);
\draw  (3,2) rectangle (4,3);
\draw  (-1,3) rectangle (0,4);
\draw  (0,3) rectangle (1,4);
\draw  (1,3) rectangle (2,4);
\draw  (-1,4) rectangle (0,5);
\draw [ fill={rgb,255:red,0; green,0; blue,0}, fill opacity=1] (-1,0) rectangle (0,1);
\draw [line width=5,color=red] (-2,1) -- (8,1);
\end{circuitikz}
\begin{circuitikz}
\draw [color=white]  (-2,-4) rectangle (2,4);
\draw [line width=5, double distance=6,
             arrows = {-Latex[length=0pt 3 0]}] (-2,0) -- (2,0);
\end{circuitikz}
\begin{circuitikz}
\tikzstyle{every node}=[font=\fontsize{18.2pt}{23.7pt}\selectfont]
\draw [color=white]  (-2,-1) rectangle (8,7);
\draw  (0,0) rectangle (1,1);
\draw  (1,0) rectangle (2,1);
\draw  (2,0) rectangle (3,1);
\draw  (3,0) rectangle (4,1);
\draw  (4,0) rectangle (5,1);
\draw [ fill={rgb,255:red,0; green,0; blue,0}, fill opacity=1] (-1,0) rectangle (0,1);
\end{circuitikz}
}%
\caption{Representation of a move from $\DCG(f, 5, 4)$ with $f(i)=\max(\lfloor \frac{8-i}{2} \rfloor,0)$}
\label{fig:hcutdcg}
\end{figure}

\begin{figure}[!ht]
\centering
\resizebox{1\textwidth/2}{!}{%
\begin{circuitikz}
\tikzstyle{every node}=[font=\fontsize{18.2pt}{23.7pt}\selectfont]
\draw [color=white]  (-2,-1) rectangle (8,7);
\draw  (0,0) rectangle (1,1);
\draw  (1,0) rectangle (2,1);
\draw  (2,0) rectangle (3,1);
\draw  (3,0) rectangle (4,1);
\draw  (4,0) rectangle (5,1);
\draw  (5,0) rectangle (6,1);
\draw  (6,0) rectangle (7,1);
\draw  (-1,1) rectangle (0,2);
\draw  (0,1) rectangle (1,2);
\draw  (1,1) rectangle (2,2);
\draw  (2,1) rectangle (3,2);
\draw  (3,1) rectangle (4,2);
\draw  (4,1) rectangle (5,2);
\draw  (5,1) rectangle (6,2);
\draw  (-1,2) rectangle (0,3);
\draw  (0,2) rectangle (1,3);
\draw  (1,2) rectangle (2,3);
\draw  (2,2) rectangle (3,3);
\draw  (3,2) rectangle (4,3);
\draw  (-1,3) rectangle (0,4);
\draw  (0,3) rectangle (1,4);
\draw  (1,3) rectangle (2,4);
\draw  (-1,4) rectangle (0,5);
\draw [ fill={rgb,255:red,0; green,0; blue,0}, fill opacity=1] (-1,0) rectangle (0,1);
\draw [line width=5,color=red] (-2,2) -- (8,2);
\end{circuitikz}
\begin{circuitikz}
\draw [color=white]  (-2,-4) rectangle (2,4);
\draw [line width=5, double distance=6,
             arrows = {-Latex[length=0pt 3 0]}] (-2,0) -- (2,0);
\draw [line width=5,color=red] (-2,-2) -- (2,2);
\draw [line width=5,color=red] (2,-2) -- (-2,2);
\end{circuitikz}
\begin{circuitikz}
\tikzstyle{every node}=[font=\fontsize{18.2pt}{23.7pt}\selectfont]
\draw [color=white]  (-2,-1) rectangle (8,7);
\draw  (0,0) rectangle (1,1);
\draw  (1,0) rectangle (2,1);
\draw  (2,0) rectangle (3,1);
\draw  (3,0) rectangle (4,1);
\draw  (4,0) rectangle (5,1);
\draw  (5,0) rectangle (6,1);
\draw  (6,0) rectangle (7,1);
\draw  (-1,1) rectangle (0,2);
\draw  (0,1) rectangle (1,2);
\draw  (1,1) rectangle (2,2);
\draw  (2,1) rectangle (3,2);
\draw  (3,1) rectangle (4,2);
\draw  (4,1) rectangle (5,2);
\draw  (5,1) rectangle (6,2);
\draw [ fill={rgb,255:red,0; green,0; blue,0}, fill opacity=1] (-1,0) rectangle (0,1);
\end{circuitikz}
}%
\caption{Representation of an illegal move from $\DCG(f, 7, 4)$ with $f(i)=\max(\lfloor \frac{8-i}{2} \rfloor,0)$}
\label{fig:illegalcutdcg}
\end{figure}

With this additional rule, Chocolate Games can be considered as a generalization of games under the mis\`{e}re play convention, in which the player who moves last loses.
If we play the game under the normal play convention with the additional condition that, as soon as a terminal position is reached, a new one-pile Nim position consisting of a single token appears, then the player who moves to the terminal position loses. Consequently, one must consider the same winning strategy as in mis\`{e}re play. Such an additional condition can be represented naturally in the chocolate game as follows.

Let us consider the Chocolate Game played on the chocolate bar shown in Fig. \ref{fig:mischoco} under the normal play convention. 
Here, one can only reduce the width of this chocolate until only the leftmost column remains. The height can be reduced if and only if only the leftmost column remains. Therefore, the player who reduces the width to zero becomes the loser, which corresponds to one-pile Nim under the mis\`{e}re play convention, where the player who removes the last token loses.

\begin{figure}[!ht]
\centering
\resizebox{1\textwidth/4}{!}{%
\begin{circuitikz}
\tikzstyle{every node}=[font=\fontsize{18.2pt}{23.7pt}\selectfont]
\draw  (0,0) rectangle (1,1);
\draw  (1,0) rectangle (2,1);
\draw  (2,0) rectangle (3,1);
\draw  (3,0) rectangle (4,1);
\draw  (4,0) rectangle (5,1);
\draw  (5,0) rectangle (6,1);
\draw  (6,0) rectangle (7,1);
\draw  (-1,1) rectangle (0,2);
\draw [ fill={rgb,255:red,0; green,0; blue,0}, fill opacity=1] (-1,0) rectangle (0,1);
\end{circuitikz}
}%
\caption{Representation of a chocolate bar on which Chocolate Game corresponds to one-pile Nim under the mis\`{e}re play convention}
\label{fig:mischoco}
\end{figure}

In previous studies \cite{MN21}, a necessary and sufficient condition for the Sprague--Grundy value of an increasing chocolate bar to be equal to the Nim--sum of the coordinates of the position is given.

Figure~\ref{fig:tastychoco} shows, in each block, the Sprague--Grundy value of the position obtained when that block becomes the upper-right block of the remaining chocolate bar, that is, when all blocks above it and to its right have been removed. In this figure, for a block with coordinates $(x,y)$, the Sprague--Grundy value is given by $x\oplus y$, where $\oplus$ denotes the Nim--sum. On the other hand, in the chocolate bar shown in Figure~\ref{fig:nontastychoco}, this property fails for several blocks (described by red numbers). In this paper, we call a chocolate bar {\em tasty} if the Sprague--Grundy value associated with every block is given by the Nim--sum of its coordinates. The term ``tasty'' is a play on the well--known term ``tame,'' which is used for games with well-behaved properties under mis\`{e}re play.

  \begin{figure}[!ht]
\centering
\resizebox{1\textwidth/4}{!}{%
\begin{circuitikz}
\tikzstyle{every node}=[font=\fontsize{18.2pt}{23.7pt}\selectfont]
\draw  (0,0) rectangle (1,1);
\draw  (1,0) rectangle (2,1);
\draw  (2,0) rectangle (3,1);
\draw  (3,0) rectangle (4,1);
\draw  (4,0) rectangle (5,1);
\draw  (5,0) rectangle (6,1);
\draw  (6,0) rectangle (7,1);
\draw  (1,1) rectangle (2,2);
\draw  (2,1) rectangle (3,2);
\draw  (3,1) rectangle (4,2);
\draw  (4,1) rectangle (5,2);
\draw  (5,1) rectangle (6,2);
\draw  (6,1) rectangle (7,2);
\draw  (3,2) rectangle (4,3);
\draw  (4,2) rectangle (5,3);
\draw  (5,2) rectangle (6,3);
\draw  (6,2) rectangle (7,3);
\draw  (5,3) rectangle (6,4);
\draw  (6,3) rectangle (7,4);
\draw [ fill={rgb,255:red,0; green,0; blue,0}, fill opacity=1] (-1,0) rectangle (0,1);
\node [font=\fontsize{18.2pt}{23.7pt}\selectfont, inner xsep=0.080cm, inner ysep=0.085cm, rounded corners=0.020cm, color=white] at (-0.5,0.5) {0};
\node [font=\fontsize{18.2pt}{23.7pt}\selectfont, inner xsep=0.080cm, inner ysep=0.085cm, rounded corners=0.020cm] at (0.5,0.5) {1};
\node [font=\fontsize{18.2pt}{23.7pt}\selectfont, inner xsep=0.080cm, inner ysep=0.085cm, rounded corners=0.020cm] at (1.5,0.5) {2};
\node [font=\fontsize{18.2pt}{23.7pt}\selectfont, inner xsep=0.080cm, inner ysep=0.085cm, rounded corners=0.020cm] at (2.5,0.5) {3};
\node [font=\fontsize{18.2pt}{23.7pt}\selectfont, inner xsep=0.080cm, inner ysep=0.085cm, rounded corners=0.020cm] at (3.5,0.5) {4};
\node [font=\fontsize{18.2pt}{23.7pt}\selectfont, inner xsep=0.080cm, inner ysep=0.085cm, rounded corners=0.020cm] at (4.5,0.5) {5};
\node [font=\fontsize{18.2pt}{23.7pt}\selectfont, inner xsep=0.080cm, inner ysep=0.085cm, rounded corners=0.020cm] at (5.5,0.5) {6};
\node [font=\fontsize{18.2pt}{23.7pt}\selectfont, inner xsep=0.080cm, inner ysep=0.085cm, rounded corners=0.020cm] at (6.5,0.5) {7};
\node [font=\fontsize{18.2pt}{23.7pt}\selectfont, inner xsep=0.080cm, inner ysep=0.085cm, rounded corners=0.020cm] at (1.5,1.5) {3};
\node [font=\fontsize{18.2pt}{23.7pt}\selectfont, inner xsep=0.080cm, inner ysep=0.085cm, rounded corners=0.020cm] at (2.5,1.5) {2};
\node [font=\fontsize{18.2pt}{23.7pt}\selectfont, inner xsep=0.080cm, inner ysep=0.085cm, rounded corners=0.020cm] at (3.5,1.5) {5};
\node [font=\fontsize{18.2pt}{23.7pt}\selectfont, inner xsep=0.080cm, inner ysep=0.085cm, rounded corners=0.020cm] at (4.5,1.5) {4};
\node [font=\fontsize{18.2pt}{23.7pt}\selectfont, inner xsep=0.080cm, inner ysep=0.085cm, rounded corners=0.020cm] at (5.5,1.5) {7};
\node [font=\fontsize{18.2pt}{23.7pt}\selectfont, inner xsep=0.080cm, inner ysep=0.085cm, rounded corners=0.020cm] at (6.5,1.5) {6};
\node [font=\fontsize{18.2pt}{23.7pt}\selectfont, inner xsep=0.080cm, inner ysep=0.085cm, rounded corners=0.020cm] at (3.5,2.5) {6};
\node [font=\fontsize{18.2pt}{23.7pt}\selectfont, inner xsep=0.080cm, inner ysep=0.085cm, rounded corners=0.020cm] at (4.5,2.5) {7};
\node [font=\fontsize{18.2pt}{23.7pt}\selectfont, inner xsep=0.080cm, inner ysep=0.085cm, rounded corners=0.020cm] at (5.5,2.5) {4};
\node [font=\fontsize{18.2pt}{23.7pt}\selectfont, inner xsep=0.080cm, inner ysep=0.085cm, rounded corners=0.020cm] at (6.5,2.5) {5};
\node [font=\fontsize{18.2pt}{23.7pt}\selectfont, inner xsep=0.080cm, inner ysep=0.085cm, rounded corners=0.020cm] at (5.5,3.5) {5};
\node [font=\fontsize{18.2pt}{23.7pt}\selectfont, inner xsep=0.080cm, inner ysep=0.085cm, rounded corners=0.020cm] at (6.5,3.5) {4};
\end{circuitikz}
}%
\caption{Representation of a tasty chocolate bar filled with Sprague--Grundy value}
\label{fig:tastychoco}
\end{figure}

 \begin{figure}[!ht]
\centering
\resizebox{1\textwidth/4}{!}{%
\begin{circuitikz}
\tikzstyle{every node}=[font=\fontsize{18.2pt}{23.7pt}\selectfont]
\draw  (0,0) rectangle (1,1);
\draw  (1,0) rectangle (2,1);
\draw  (2,0) rectangle (3,1);
\draw  (3,0) rectangle (4,1);
\draw  (4,0) rectangle (5,1);
\draw  (5,0) rectangle (6,1);
\draw  (6,0) rectangle (7,1);
\draw  (0,1) rectangle (1,2);
\draw  (1,1) rectangle (2,2);
\draw  (2,1) rectangle (3,2);
\draw  (3,1) rectangle (4,2);
\draw  (4,1) rectangle (5,2);
\draw  (5,1) rectangle (6,2);
\draw  (6,1) rectangle (7,2);
\draw  (3,2) rectangle (4,3);
\draw  (4,2) rectangle (5,3);
\draw  (5,2) rectangle (6,3);
\draw  (6,2) rectangle (7,3);
\draw  (5,3) rectangle (6,4);
\draw  (6,3) rectangle (7,4);
\draw [ fill={rgb,255:red,0; green,0; blue,0}, fill opacity=1] (-1,0) rectangle (0,1);
\node [font=\fontsize{18.2pt}{23.7pt}\selectfont, inner xsep=0.080cm, inner ysep=0.085cm, rounded corners=0.020cm, color=white] at (-0.5,0.5) {0};
\node [font=\fontsize{18.2pt}{23.7pt}\selectfont, inner xsep=0.080cm, inner ysep=0.085cm, rounded corners=0.020cm] at (0.5,0.5) {1};
\node [font=\fontsize{18.2pt}{23.7pt}\selectfont, inner xsep=0.080cm, inner ysep=0.085cm, rounded corners=0.020cm] at (1.5,0.5) {2};
\node [font=\fontsize{18.2pt}{23.7pt}\selectfont, inner xsep=0.080cm, inner ysep=0.085cm, rounded corners=0.020cm] at (2.5,0.5) {3};
\node [font=\fontsize{18.2pt}{23.7pt}\selectfont, inner xsep=0.080cm, inner ysep=0.085cm, rounded corners=0.020cm] at (3.5,0.5) {4};
\node [font=\fontsize{18.2pt}{23.7pt}\selectfont, inner xsep=0.080cm, inner ysep=0.085cm, rounded corners=0.020cm] at (4.5,0.5) {5};
\node [font=\fontsize{18.2pt}{23.7pt}\selectfont, inner xsep=0.080cm, inner ysep=0.085cm, rounded corners=0.020cm] at (5.5,0.5) {6};
\node [font=\fontsize{18.2pt}{23.7pt}\selectfont, inner xsep=0.080cm, inner ysep=0.085cm, rounded corners=0.020cm] at (6.5,0.5) {7};
\node [font=\fontsize{18.2pt}{23.7pt}\selectfont, inner xsep=0.080cm, inner ysep=0.085cm, rounded corners=0.020cm, color=red] at (0.5,1.5) {2};
\node [font=\fontsize{18.2pt}{23.7pt}\selectfont, inner xsep=0.080cm, inner ysep=0.085cm, rounded corners=0.020cm, color=red] at (1.5,1.5) {1};
\node [font=\fontsize{18.2pt}{23.7pt}\selectfont, inner xsep=0.080cm, inner ysep=0.085cm, rounded corners=0.020cm, color=red] at (2.5,1.5) {4};
\node [font=\fontsize{18.2pt}{23.7pt}\selectfont, inner xsep=0.080cm, inner ysep=0.085cm, rounded corners=0.020cm, color=red] at (3.5,1.5) {3};
\node [font=\fontsize{18.2pt}{23.7pt}\selectfont, inner xsep=0.080cm, inner ysep=0.085cm, rounded corners=0.020cm, color=red] at (4.5,1.5) {6};
\node [font=\fontsize{18.2pt}{23.7pt}\selectfont, inner xsep=0.080cm, inner ysep=0.085cm, rounded corners=0.020cm, color=red] at (5.5,1.5) {5};
\node [font=\fontsize{18.2pt}{23.7pt}\selectfont, inner xsep=0.080cm, inner ysep=0.085cm, rounded corners=0.020cm, color=red] at (6.5,1.5) {8};
\node [font=\fontsize{18.2pt}{23.7pt}\selectfont, inner xsep=0.080cm, inner ysep=0.085cm, rounded corners=0.020cm, color=red] at (3.5,2.5) {5};
\node [font=\fontsize{18.2pt}{23.7pt}\selectfont, inner xsep=0.080cm, inner ysep=0.085cm, rounded corners=0.020cm, color=red] at (4.5,2.5) {3};
\node [font=\fontsize{18.2pt}{23.7pt}\selectfont, inner xsep=0.080cm, inner ysep=0.085cm, rounded corners=0.020cm, color=red] at (5.5,2.5) {7};
\node [font=\fontsize{18.2pt}{23.7pt}\selectfont, inner xsep=0.080cm, inner ysep=0.085cm, rounded corners=0.020cm, color=red] at (6.5,2.5) {6};
\node [font=\fontsize{18.2pt}{23.7pt}\selectfont, inner xsep=0.080cm, inner ysep=0.085cm, rounded corners=0.020cm, color=red] at (5.5,3.5) {8};
\node [font=\fontsize{18.2pt}{23.7pt}\selectfont, inner xsep=0.080cm, inner ysep=0.085cm, rounded corners=0.020cm, color=red] at (6.5,3.5) {9};
\end{circuitikz}
}%
\caption{Representation of a non-tasty chocolate bar fill with Sprague--Grundy value}
\label{fig:nontastychoco}
\end{figure}

In the previous studies \cite{MN21}, 
a necessary and sufficient condition for
each of two cases of increasing two-dimensional chocolate bar and increasing three-dimensional chocolate bar to be tasty is shown.
For each case, a lengthy argument requiring an entire paper was necessary. In this study, our result gives a simpler proof that subsumes these previous results. We give a necessary and sufficient condition to be tasty not only for the cases in previous studies, but also increasing chocolate bars in two and higher dimensional cases. We also show a sufficient condition to be tasty for more complex shaped chocolate bars. 
This is one of the main results of this paper, and we believe that it makes a significant contribution to the study of Chocolate Games.

Furthermore, from these results, we show relationships between these chocolate bars.

Generalizations of mis\`{e}re play and games with a pass have already been studied, as in \cite{S26}. As we have seen above, mis\`{e}re play can be regarded as a game in which a one-pile Nim component with one token is created as soon as a terminal position is reached, whereas a game with a pass can be regarded as one in which such a component disappears at the same moment. The generalizations considered in the paper replace this terminal-position condition by allowing a one-pile Nim component with one token to be created or removed at a specified point during the play of the game.

In such variants, the Sprague--Grundy value of the entire position in the generalization of misère play is equal to the Nim--sum of the Sprague--Grundy value of the original position and the size of the additional Nim pile if and only if the analogous equality holds for the corresponding generalization of games with a pass.
This can be applied for Chocolate Games. This result shows a correspondence that both chocolate bars in Figs. \ref{fig:tastypass} and \ref{fig:tastymisere} are tasty; where the steps of one staircase chocolate bar become lower, those of the other staircase chocolate bar become higher. 
In this paper, we show a further generalized result:
If an increasing chocolate bar is tasty, there exists a corresponding decreasing tasty chocolate bar, and the cross-sections of the two pieces of chocolate fit together perfectly to form a single rectangle, as shown in Fig. \ref{fig:inc_dec_tasty}.

\begin{figure}[!ht]
\centering
\resizebox{1\textwidth/4}{!}{%
\begin{circuitikz}
\tikzstyle{every node}=[font=\fontsize{18.2pt}{23.7pt}\selectfont]
\draw  (0,0) rectangle (1,1);
\draw  (1,0) rectangle (2,1);
\draw  (2,0) rectangle (3,1);
\draw  (3,0) rectangle (4,1);
\draw  (4,0) rectangle (5,1);
\draw  (5,0) rectangle (6,1);
\draw  (6,0) rectangle (7,1);
\draw  (3,1) rectangle (4,2);
\draw  (4,1) rectangle (5,2);
\draw  (5,1) rectangle (6,2);
\draw  (6,1) rectangle (7,2);
\draw [ fill={rgb,255:red,0; green,0; blue,0}, fill opacity=1] (-1,0) rectangle (0,1);
\node [font=\fontsize{18.2pt}{23.7pt}\selectfont, inner xsep=0.080cm, inner ysep=0.085cm, rounded corners=0.020cm, color=white] at (-0.5,0.5) {0};
\node [font=\fontsize{18.2pt}{23.7pt}\selectfont, inner xsep=0.080cm, inner ysep=0.085cm, rounded corners=0.020cm] at (0.5,0.5) {1};
\node [font=\fontsize{18.2pt}{23.7pt}\selectfont, inner xsep=0.080cm, inner ysep=0.085cm, rounded corners=0.020cm] at (1.5,0.5) {2};
\node [font=\fontsize{18.2pt}{23.7pt}\selectfont, inner xsep=0.080cm, inner ysep=0.085cm, rounded corners=0.020cm] at (2.5,0.5) {3};
\node [font=\fontsize{18.2pt}{23.7pt}\selectfont, inner xsep=0.080cm, inner ysep=0.085cm, rounded corners=0.020cm] at (3.5,0.5) {4};
\node [font=\fontsize{18.2pt}{23.7pt}\selectfont, inner xsep=0.080cm, inner ysep=0.085cm, rounded corners=0.020cm] at (4.5,0.5) {5};
\node [font=\fontsize{18.2pt}{23.7pt}\selectfont, inner xsep=0.080cm, inner ysep=0.085cm, rounded corners=0.020cm] at (5.5,0.5) {6};
\node [font=\fontsize{18.2pt}{23.7pt}\selectfont, inner xsep=0.080cm, inner ysep=0.085cm, rounded corners=0.020cm] at (6.5,0.5) {7};
\node [font=\fontsize{18.2pt}{23.7pt}\selectfont, inner xsep=0.080cm, inner ysep=0.085cm, rounded corners=0.020cm] at (3.5,1.5) {5};
\node [font=\fontsize{18.2pt}{23.7pt}\selectfont, inner xsep=0.080cm, inner ysep=0.085cm, rounded corners=0.020cm] at (4.5,1.5) {4};
\node [font=\fontsize{18.2pt}{23.7pt}\selectfont, inner xsep=0.080cm, inner ysep=0.085cm, rounded corners=0.020cm] at (5.5,1.5) {7};
\node [font=\fontsize{18.2pt}{23.7pt}\selectfont, inner xsep=0.080cm, inner ysep=0.085cm, rounded corners=0.020cm] at (6.5,1.5) {6};
\end{circuitikz}
}%
\caption{Representation of a tasty one staircase chocolate bar filled with Sprague--Grundy value}
\label{fig:tastypass}
\end{figure}

\begin{figure}[!ht]
\centering
\resizebox{1\textwidth/4}{!}{%
\begin{circuitikz}
\tikzstyle{every node}=[font=\fontsize{18.2pt}{23.7pt}\selectfont]
\draw  (0,0) rectangle (1,1);
\draw  (1,0) rectangle (2,1);
\draw  (2,0) rectangle (3,1);
\draw  (3,0) rectangle (4,1);
\draw  (4,0) rectangle (5,1);
\draw  (5,0) rectangle (6,1);
\draw  (6,0) rectangle (7,1);
\draw  (-1,1) rectangle (0,2);
\draw  (0,1) rectangle (1,2);
\draw  (1,1) rectangle (2,2);
\draw  (2,1) rectangle (3,2);
\draw [ fill={rgb,255:red,0; green,0; blue,0}, fill opacity=1] (-1,0) rectangle (0,1);
\node [font=\fontsize{18.2pt}{23.7pt}\selectfont, inner xsep=0.080cm, inner ysep=0.085cm, rounded corners=0.020cm, color=white] at (-0.5,0.5) {0};
\node [font=\fontsize{18.2pt}{23.7pt}\selectfont, inner xsep=0.080cm, inner ysep=0.085cm, rounded corners=0.020cm] at (0.5,0.5) {1};
\node [font=\fontsize{18.2pt}{23.7pt}\selectfont, inner xsep=0.080cm, inner ysep=0.085cm, rounded corners=0.020cm] at (1.5,0.5) {2};
\node [font=\fontsize{18.2pt}{23.7pt}\selectfont, inner xsep=0.080cm, inner ysep=0.085cm, rounded corners=0.020cm] at (2.5,0.5) {3};
\node [font=\fontsize{18.2pt}{23.7pt}\selectfont, inner xsep=0.080cm, inner ysep=0.085cm, rounded corners=0.020cm] at (3.5,0.5) {4};
\node [font=\fontsize{18.2pt}{23.7pt}\selectfont, inner xsep=0.080cm, inner ysep=0.085cm, rounded corners=0.020cm] at (4.5,0.5) {5};
\node [font=\fontsize{18.2pt}{23.7pt}\selectfont, inner xsep=0.080cm, inner ysep=0.085cm, rounded corners=0.020cm] at (5.5,0.5) {6};
\node [font=\fontsize{18.2pt}{23.7pt}\selectfont, inner xsep=0.080cm, inner ysep=0.085cm, rounded corners=0.020cm] at (6.5,0.5) {7};
\node [font=\fontsize{18.2pt}{23.7pt}\selectfont, inner xsep=0.080cm, inner ysep=0.085cm, rounded corners=0.020cm] at (-0.5,1.5) {1};
\node [font=\fontsize{18.2pt}{23.7pt}\selectfont, inner xsep=0.080cm, inner ysep=0.085cm, rounded corners=0.020cm] at (0.5,1.5) {0};
\node [font=\fontsize{18.2pt}{23.7pt}\selectfont, inner xsep=0.080cm, inner ysep=0.085cm, rounded corners=0.020cm] at (1.5,1.5) {3};
\node [font=\fontsize{18.2pt}{23.7pt}\selectfont, inner xsep=0.080cm, inner ysep=0.085cm, rounded corners=0.020cm] at (2.5,1.5) {2};
\end{circuitikz}
}%
\caption{Representation of a tasty one staircase chocolate bar filled with Sprague--Grundy value}
\label{fig:tastymisere}
\end{figure}

\begin{figure}[!ht]
\centering
\resizebox{1\textwidth/4}{!}{%
\begin{circuitikz}
\tikzstyle{every node}=[font=\fontsize{18.2pt}{23.7pt}\selectfont]
\draw [color=white]  (-2,-1) rectangle (8,7);
\draw  (0,0) rectangle (1,1);
\draw  (1,0) rectangle (2,1);
\draw  (2,0) rectangle (3,1);
\draw  (3,0) rectangle (4,1);
\draw  (4,0) rectangle (5,1);
\draw  (5,0) rectangle (6,1);
\draw  (6,0) rectangle (7,1);
\draw  (1,1) rectangle (2,2);
\draw  (2,1) rectangle (3,2);
\draw  (3,1) rectangle (4,2);
\draw  (4,1) rectangle (5,2);
\draw  (5,1) rectangle (6,2);
\draw  (6,1) rectangle (7,2);
\draw  (3,2) rectangle (4,3);
\draw  (4,2) rectangle (5,3);
\draw  (5,2) rectangle (6,3);
\draw  (6,2) rectangle (7,3);
\draw  (5,3) rectangle (6,4);
\draw  (6,3) rectangle (7,4);
\draw [ fill={rgb,255:red,0; green,0; blue,0}, fill opacity=1] (-1,0) rectangle (0,1);

\draw  (-1,5) rectangle (0,6);
\draw  (-1,4) rectangle (0,5);
\draw  (-1,3) rectangle (0,4);
\draw  (0,6) rectangle (1,7);
\draw  (0,5) rectangle (1,6);
\draw  (0,4) rectangle (1,5);
\draw  (0,3) rectangle (1,4);
\draw  (1,6) rectangle (2,7);
\draw  (1,5) rectangle (2,6);
\draw  (1,4) rectangle (2,5);
\draw  (2,6) rectangle (3,7);
\draw  (2,5) rectangle (3,6);
\draw  (2,4) rectangle (3,5);
\draw  (3,6) rectangle (4,7);
\draw  (3,5) rectangle (4,6);
\draw  (4,6) rectangle (5,7);
\draw  (4,5) rectangle (5,6);
\draw  (5,6) rectangle (6,7);
\draw  (6,6) rectangle (7,7);
\draw [ fill={rgb,255:red,0; green,0; blue,0}, fill opacity=1] (-1,6) rectangle (0,7);
\end{circuitikz}
}%
\caption{Representation of one increasing tasty chocolate bar and a corresponding decreasing tasty chocolate bar forming a single rectangle}
\label{fig:inc_dec_tasty}
\end{figure}

The remainder of this paper is organized as follows. In the rest of this section, we introduce the necessary definitions and review previous work. 
In the next section, we present several results on functions satisfying a condition called tasty condition,  which plays an important role in this paper.
In Section \ref{sec:main}, we first present our results for the two-dimensional Chocolate Games, and later, we show that analogous results can also be obtained in higher dimensions.

\subsection{Preliminaries}

In this paper, we only consider {\em impartial games}. That is, two-player games such that in every position, the sets of options for the both players are the same.

For impartial games, Sprague--Grundy value is an important parameter.


\begin{definition}
For any position $G$ in an impartial game, its {\em Sprague--Grundy value} $\mathcal{G}(G)$ is    
$$\mathcal{G}(G) = {\rm mex}(\{\mathcal{G}(G')\mid G' \text{ is an option of } G\}),$$
where ${\rm mex}(S) = \min(\mathbb{Z}_{\ge 0}\setminus S)$.
\end{definition}

Sprague--Grundy values are independently introduced by Sprague \cite{S35} and Grundy \cite{G39}. 
The reason why Sprague--Grundy values are important is that the value is zero is a necessary and sufficient condition  for the previous player has a winning strategy in the position. 

Furthermore, the Sprague--Grundy values play a crucial role in the analysis of combinations of games known as disjunctive sums.
A disjunctive sum of games is a game such that there are some game positions and each player chooses exactly one position to make a move, and the turn ends. The play ends when every component becomes a terminal position. The Sprague--Grundy value of a position which can be considered as a disjunctive sum of some games is known to be equal to bitwise-XOR of binary notations (or Nim--sum) of Sprague--Grundy values of the components. The Nim--sum is denoted as $\oplus$.

Note that for $n$-pile Nim whose numbers of tokens are $a_1, \ldots, a_n,$ its Sprague--Grundy values is $a_1 \oplus \cdots \oplus a_n$ since the game is a disjunctive sum of one-pile Nim positions and it is easy to confirm that the Sprague--Grundy value of a one-pile Nim position is the same as its number of tokens.

In this paper, we consider the Sprague--Grundy values of Chocolate Games.
Since, as shown below, Chocolate Game on rectangular chocolate bar is isomorphic to a two--pile Nim and its Sprague--Grundy value can be obtained by the Nim--sum of the width and height of the rectangular chocolate bar, previous studies have investigated the conditions under which the Sprague--Grundy value is given by the Nim--sum of the height and width for non-rectangular chocolate bars.

\begin{definition}[Rectangular Chocolate Game]
     For nonnegative integers $x$ and $y$, we consider the chocolate bar  consists of $x+1$ columns where the bottom of $0$-th column is the bitter square, and the height of the $i$-th column is $t(i) =  y+1$ for $i = 0,1,\ldots ,x$. We denote this by $\SCG(x, y)$.
\end{definition}

\begin{theorem}
   For all $x,y$ in $\mathbb{Z}_{\ge 0}$, $g(\SCG(x,y))=x\oplus y$.
\end{theorem}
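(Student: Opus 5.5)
The plan is to show that Chocolate Game on $\SCG(x,y)$ is isomorphic to two-pile Nim with piles $x$ and $y$, and then use the facts recalled above: a one-pile Nim position has Sprague--Grundy value equal to its size, and the value of a disjunctive sum is the Nim--sum of the component values. This gives $g(\SCG(x,y)) = x\oplus y$.

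First I describe the options of $\SCG(x,y)$. A cut is along a groove, so it is either vertical, between columns $i-1$ and $i$ for some $1\le i\le x$, or horizontal, between rows $j-1$ and $j$ for some $1\le j\le y$. The player must keep the piece containing the bitter block at $(0,0)$, since eating it loses. So a vertical cut leaves columns $0,\dots,i-1$, each of height $y+1$, which is $\SCG(i-1,y)$ with $0\le i-1<x$. Likewise a horizontal cut leaves $\SCG(x,j-1)$ with $0\le j-1<y$.

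Hence the option set of $\SCG(x,y)$ is exactly
\[
\{\SCG(x',y) : 0\le x'<x\}\cup\{\SCG(x,y'): 0\le y'<y\}.
\]
These are precisely the options of the two-pile Nim position $(x,y)$ under the map $\SCG(x,y)\mapsto(x,y)$. The terminal position $\SCG(0,0)$, the lone bitter block, has no options, so the player who must move there has lost, as in normal play. The map is a bijection on positions that commutes with the option relation, so the game graphs coincide.

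To avoid relying on the isomorphism informally, I would instead argue by induction on $x+y$ directly from the mex definition. By the inductive hypothesis, the option values form the set $\{x'\oplus y : x'<x\}\cup\{x\oplus y': y'<y\}$. Two checks remain:

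1. The value $x\oplus y$ itself is not attained, because $x'\oplus y = x\oplus y$ forces $x'=x$.
2. Every $v< x\oplus y$ is attained. Let $k$ be the highest bit of $v\oplus x\oplus y$. Bit $k$ is set in $x\oplus y$ and not in $v$, so it is set in exactly one of $x$ and $y$. If it is set in $x$, then $x':=v\oplus y$ satisfies $x'<x$, since it agrees with $x$ above bit $k$ and has bit $k$ cleared. The case of $y$ is symmetric.

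Hence the mex is $x\oplus y$. The only step that needs any care is this standard bit argument.
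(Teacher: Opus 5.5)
Your proposal is correct and follows the paper's route. Like the paper, you identify $\SCG(x,y)$ with the two-pile Nim position $(x,y)$, using the option sets $\{\SCG(x',y) : x'<x\}\cup\{\SCG(x,y') : y'<y\}$, and read off $x\oplus y$. Your extra induction on $x+y$ with the highest-bit argument is a valid, self-contained substitute for the Nim facts the paper invokes (one-pile Nim values and the disjunctive-sum theorem). It makes the proof independent of those facts, but it is not needed.
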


\begin{proof}
    Rectangular Chocolate Game is isomorphic to two-pile Nim as we can consider the two dimensions as two different piles. Thus, for all $x,y$, $g(\SCG(x,y))=x\oplus y$. 
\end{proof}

We only focus on chocolate bars where the bitter square is in the lower left corner and the player have to take one of the rightmost square at each turn.


In \cite{MN21}, the authors characterize when a chocolate bar whose height increases from right to left has the same Sprague--Grundy value as the rectangular chocolate bar with the same height and width.

\begin{definition}
\label{def:func_increase}
    Let $f$ be a function that satisfies the following two conditions:
    \begin{itemize}
        \item $f(t)\in \mathbb{Z}_{\ge 0}$ for $t \in \mathbb{Z}_{\ge 0}$. 
        \item $f$ is monotonically increasing, i.e., we have $f(u)\le f(v)$ for $u,v\in \mathbb{Z}_{\ge 0}$ with $u\le v$.
    \end{itemize}
    Then, $f$ is an {\em increasing chocolate function}.
\end{definition}

\begin{definition}[Increasing Chocolate Game]
    Let $f$ be an increasing chocolate function. For nonnegative integers $x$, $y$, we consider the chocolate bar consists of $x+1$ columns where the bottom of $0$-th column is the bitter square, and the height of the $i$-th column is $t(i) = \min(f (i), y) + 1$ for $i = 0,1,\ldots  ,x$. We  denote this by $\CB(f, x, y)$.
\end{definition}

\begin{definition}[Tasty condition]
\label{def:conditionCB}
    Let $h$ be a function of $\mathbb{Z}_{\ge 0}^n$ into $\mathbb{Z}_{\ge 0}$.
    $h$ satisfies {\em tasty condition} if: 

Assume that 
    \begin{equation}
        \left\lfloor \frac{z}{2^i} \right\rfloor=\left\lfloor \frac{z'}{2^i} \right\rfloor
    \end{equation}
    for some $z,z' \in \mathbb{Z}_{\ge 0} $ and some positive integer $i$. Then we have for all $x_1,\ldots  ,x_n \in \mathbb{Z}_{\ge 0}$
    \begin{equation}
        \left \lfloor \frac{h(x_1,x_2,\ldots  x_{k-1},z,x_{k+1},\ldots  ,x_n)}{2^{i-1}} \right \rfloor= \left \lfloor \frac{h(x_1,x_2,\ldots  x_{k-1},z',x_{k+1},\ldots  ,x_n)}{2^{i-1}} \right\rfloor
    \end{equation}
\end{definition}

The following Theorem \ref{thm:inctasty} is shown in  \cite{MNN20}.

\begin{definition}
    Let $f$ be a function from $\mathbb{Z}_{\ge 0}$ to $\mathbb{Z}_{\ge 0}.$
    $\CB(f,x,y)$ is {\em tasty} if for every $0\le i \le x$ and $0 \le j \le y$, $$g(\CB(f,i,j))  = i\oplus \min(f(i),j).$$
    Similarly, $\DCG(f,x,y)$ is {\em tasty} if for every $0\le i \le x$ and $0 \le j \le y$, $$g(\DCG(f,i,j))  = i\oplus \min(f(i),j).$$
\end{definition}

\begin{theorem}
\label{thm:inctasty}
    $\CB(f, x, y)$ is tasty for any nonnegative integers $x, y$ if and only if $f$ satisfies the tasty condition. 
\end{theorem}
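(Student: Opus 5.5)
We prove sufficiency by strong induction on $x+y$, and necessity by exhibiting an explicit wrong Sprague--Grundy value. First a normalisation. Since $f$ is increasing, the shape of $\CB(f,x,y)$ depends only on $x$ and $y':=\min(f(x),y)$: every column $i\le x$ has height $\min(f(i),y')+1$. So we may assume $y=y'\le f(x)$.

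The options of this position are of two kinds:
\begin{itemize}
\item horizontal cuts, giving $\CB(f,x,j)$ with $j<y'$;
\item vertical cuts, giving $\CB(f,u,y')$ with $u<x$, which is the normalised position $(u,\min(f(u),y'))$.
\end{itemize}
By induction the values of these options are $x\oplus j$ and $u\oplus\min(f(u),y')$ respectively. It therefore suffices to prove (a) that no option has value $x\oplus y'$, and (b) that every $v<x\oplus y'$ is the value of some option.

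For (a), horizontal cuts are clear, since $j\neq y'$. For a vertical cut, suppose $u\oplus\min(f(u),y')=x\oplus y'$, and let $k$ be the top bit in which $u$ and $x$ differ; $x$ has a $1$ there because $u<x$. Then $\min(f(u),y')\oplus y'$ has top bit exactly $k$, so $\min(f(u),y')=f(u)<y'$. But $\lfloor u/2^{k+1}\rfloor=\lfloor x/2^{k+1}\rfloor$, so the tasty condition with $i=k+1$ gives $\lfloor f(u)/2^k\rfloor=\lfloor f(x)/2^k\rfloor$. Together with $f(u)<y'\le f(x)$ this forces $\lfloor f(u)/2^k\rfloor=\lfloor y'/2^k\rfloor$, so $f(u)$ and $y'$ agree in every bit $\ge k$. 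This contradicts the fact that they differ at bit $k$.

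For (b), let $d$ be the top bit of $v\oplus x\oplus y'$; bit $d$ is set in $x\oplus y'$ and clear in $v$. If $y'$ has a $1$ at bit $d$, the horizontal cut to $j=v\oplus x<y'$ realises $v$. Otherwise $x$ has a $1$ at bit $d$, and we set $u=v\oplus y'<x$, whose top bit of difference from $x$ is $d$. If $f(u)\ge y'$, the vertical cut to $u$ realises $v$.

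The remaining case, $f(u)<y'$, is the main obstacle. Here the tasty condition again shows that $f(u)$ agrees with $y'$ (and with $f(x)$) in all bits $\ge d$. In particular $y'_d=0$, so $f(u)\oplus y'<2^d$ and the value $u\oplus f(u)$ differs from $v$ only below bit $d$. I would handle this by a secondary induction on $d$ (or on $y'-f(u)$). The aim is to replace $u$ by another $u^*$ in the same aligned block of size $2^{d}$ (so $f(u^*)$ still agrees with $y'$ above bit $d-1$) and repeat the argument at lower bits. When $y'$ has a $1$ at the offending lower bit, we instead switch to a horizontal cut, whose target then lies below $y'$. What has to be checked is that this bit-by-bit descent terminates with a legal option of value exactly $v$. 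If this proves awkward, the fallback is to restrict $x$, $y'$ and $f$ to their low $d$ bits and invoke the induction hypothesis on that smaller staircase.

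For necessity, suppose the tasty condition fails. Choose a violating triple $z<z'$, $i$ with $i$ minimal (and then $z'$ minimal), so that $\lfloor z/2^i\rfloor=\lfloor z'/2^i\rfloor$ but $\lfloor f(z)/2^{i-1}\rfloor<\lfloor f(z')/2^{i-1}\rfloor$. Take $x=z'$ and $y=f(z')$, and assume for contradiction that all smaller positions are tasty. The plan is to find $u\in[z,z')$ with $f(u)$ jumping across a multiple of $2^{i-1}$. Since $z'\oplus u<2^{i}$, the vertical cut to $u$ then yields $u\oplus f(u)=z'\oplus f(z')$. This exhibits an option with the same value as $z'\oplus f(z')$, so $\CB(f,z',f(z'))$ is not tasty.
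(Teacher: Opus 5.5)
Your normalisation to $y'=\min(f(x),y)\le f(x)$, your description of the options, and part (a) are correct. Part (a) is in effect the injectivity of $u\mapsto u\oplus\min(f(u),y')$ forced by the tasty condition.

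\textbf{Sufficiency.} The gap is part (b) in the case $x_d=1$, $f(u)<y'$. The ``bit-by-bit descent'' and the ``fallback'' are plans, not arguments, and this case is the whole content of this direction. The paper handles it with the set identity $\{h(u)\oplus u\mid u<x\}=\{h(x)\oplus u\mid u<x\}$ for tasty $h$ (Lemmas~\ref{lemma:yf(x)f(x-1)}--\ref{lemma:equality_f(x)f(y)}). It applies this to $h=\min(f,y')$, which is tasty by Theorem~\ref{theorem:mintasty} and satisfies $h(x)=y'$, so the vertical-cut values coincide with those of the rectangle. Within your own setup there is a short repair:
\begin{itemize}
\item Work with $h=\min(f,y')$ throughout. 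Running the argument of (a) for $h$ shows that $u\mapsto u\oplus h(u)$ is injective.
\item The $2^d$ indices $u$ with $\lfloor u/2^d\rfloor=\lfloor x/2^d\rfloor-1$ are all $<x$ and lie in the same aligned $2^{d+1}$-block as $x$.
\item The tasty condition with $i=d+1$ then gives $\lfloor h(u)/2^d\rfloor=\lfloor y'/2^d\rfloor$, hence $\lfloor (u\oplus h(u))/2^d\rfloor=\lfloor v/2^d\rfloor$.
\item These are $2^d$ distinct values in a window of $2^d$ integers, so one of them equals $v$.
\end{itemize}
Some argument of this kind has to be supplied; as written, sufficiency is not proved.

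\textbf{Necessity.} Here the problem is more serious: the key step is asserted without proof, and it is false. Take $f(0)=1$ and $f(t)=2$ for $t\ge 1$.
\begin{itemize}
\item The minimal violating triple is $z=0$, $z'=1$, $i=1$.
\item Every position preceding $\CB(f,1,2)$ is tasty. For example, the normalised positions $(0,0),(0,1),(1,0),(1,1),(2,0)$ have values $0,1,1,0,2$.
\item $\CB(f,1,2)$ is indeed not tasty. But its option values are $1,0$ from the horizontal cuts and $1$ from the only vertical cut, so its value is $2$, not $1\oplus 2=3$.
\item It fails because a value is \emph{missing}, and there is no $u<1$ with $u\oplus f(u)=3$.
\end{itemize}
So a minimal failure need not appear as a repeated value at $(z',f(z'))$. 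In any case, ``$f(u)$ jumps across a multiple of $2^{i-1}$'' together with $z'\oplus u<2^i$ does not imply $u\oplus f(u)=z'\oplus f(z')$.

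The paper never tries to locate the bad position. From tastiness of all positions it extracts only that $z\mapsto z\oplus\min(m,f(z))$ is injective for every $m$ (Lemma~\ref{lemma:inneguality}). It then combines monotonicity of $f$ with a pigeonhole count on the two off-diagonal quarters of each dyadic block (Lemma~\ref{lemma:divide_by_two}), plus a descending induction on block size (Lemma~\ref{lemma:bornonf(z)}). This forces $\lfloor f/2^{i-1}\rfloor$ to be constant on each aligned $2^i$-block. You need an argument of this type, using injectivity across all heights $m$ rather than at a single chosen position.
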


Similarly to the Chocolate Game on increasing chocolate bar, we characterize the case when a chocolate bar whose height decreases from right to left has the same Sprague--Grundy value as the rectangular  chocolate bar with the same height and width.

\begin{definition}
\label{def:func_decrease}
    Let $f$ be a function that satisfies the following two conditions:
    \begin{itemize}
        \item $f(t)\in \mathbb{Z}_{\ge 0}$ for $t \in \mathbb{Z}_{\ge 0}$.
        \item $f$ is monotonically decreasing, i.e., we have $f(u)\ge f(v)$ for $u,v\in \mathbb{Z}_{\ge 0}$ with $u\le v$.
    \end{itemize}

    Then, $f$ is a {\em decreasing chocolate function}.
\end{definition}

\begin{definition}[Decreasing Chocolate Game]
    Let $f$ be a decreasing chocolate function. For nonnegative integers $x$ and $y$, We consider the chocolate bar consists of $x+1$ columns where the bottom of $0$-th column is the bitter square, and the height of the $i$-th column is $t(i) = \min(f (i), y) + 1$ for $i = 0,1,\ldots  ,x$. We add the rule that at least one square on the rightmost column must be removed on each turn. We denote this by $\DCG(f, x, y)$.
\end{definition}

\begin{remark}
    From the definition, the set of options of $\DCG(f,x,y)$ is $$\{\DCG(f,x',y),\DCG(f,x,y')\mid0\le x'<x,0\le y'<\min(f(x),y)\}.$$
\end{remark}


We also consider chocolate bars with more complex shapes. 
Changing Height Chocolate Game allow not only increasing and decreasing function but any function allowing more type of chocolate bars as represented in Figure~\ref{fig:tasty_chn}.

\begin{definition}[Changing Height Chocolate Game]
    Let $f$ be a function from $\mathbb{Z}_{\ge 0}$ into $\mathbb{Z}_{\ge 0}$. For $x$, $y$, we consider the chocolate bar consists of $x+1$ columns where the $0$-th column is the bitter square, and the height of the $i$-th column is $t(i) = \min(f (i), y) + 1$ for $i = 0,1,\ldots  ,x$. We add the rule that at least one square on the rightmost column must be removed on  each turn. We denote this by $\CHCG(f, x, y)$.
\end{definition}

Figure \ref{fig:tasty_chn} shows a tasty position in Changing Height Chocolate Game.  
Note that the definition of tasty positions that are not in Increasing Chocolate Games or Decreasing Chocolate Games is given later as Definition \ref{def:tastyhigher}.  

\begin{figure}[!ht]
\centering
\resizebox{1\textwidth/2}{!}{%
\begin{circuitikz}
\tikzstyle{every node}=[font=\fontsize{18.2pt}{23.7pt}\selectfont]
\draw [color=white]  (-2,-1) rectangle (8,7);
\draw  (0,0) rectangle (1,1);
\draw  (1,0) rectangle (2,1);
\draw  (2,0) rectangle (3,1);
\draw  (3,0) rectangle (4,1);
\draw  (4,0) rectangle (5,1);
\draw  (5,0) rectangle (6,1);
\draw  (6,0) rectangle (7,1);
\draw  (-1,1) rectangle (0,2);
\draw  (0,1) rectangle (1,2);
\draw  (3,1) rectangle (4,2);
\draw  (4,1) rectangle (5,2);
\draw  (5,1) rectangle (6,2);
\draw  (6,1) rectangle (7,2);
\draw  (3,2) rectangle (4,3);
\draw  (4,2) rectangle (5,3);
\draw  (5,2) rectangle (6,3);
\draw  (6,2) rectangle (7,3);
\draw  (3,3) rectangle (4,4);
\draw  (4,3) rectangle (5,4);

\draw [ fill={rgb,255:red,0; green,0; blue,0}, fill opacity=1] (-1,0) rectangle (0,1);
\end{circuitikz}

\begin{circuitikz}
\tikzstyle{every node}=[font=\fontsize{18.2pt}{23.7pt}\selectfont]
\draw [color=white]  (-2,-1) rectangle (8,7);
\draw  (0,0) rectangle (1,1);
\draw  (1,0) rectangle (2,1);
\draw  (2,0) rectangle (3,1);
\draw  (3,0) rectangle (4,1);
\draw  (4,0) rectangle (5,1);
\draw  (5,0) rectangle (6,1);
\draw  (6,0) rectangle (7,1);
\draw  (-1,1) rectangle (0,2);
\draw  (0,1) rectangle (1,2);
\draw  (3,1) rectangle (4,2);
\draw  (4,1) rectangle (5,2);
\draw  (5,1) rectangle (6,2);
\draw  (6,1) rectangle (7,2);
\draw  (3,2) rectangle (4,3);
\draw  (4,2) rectangle (5,3);
\draw  (5,2) rectangle (6,3);
\draw  (6,2) rectangle (7,3);
\draw  (3,3) rectangle (4,4);
\draw  (4,3) rectangle (5,4);
\draw [ fill={rgb,255:red,0; green,0; blue,0}, fill opacity=1] (-1,0) rectangle (0,1);
\node [font=\fontsize{18.2pt}{23.7pt}\selectfont, inner xsep=0.080cm, inner ysep=0.085cm, rounded corners=0.020cm, color=white] at (-0.5,0.5) {0};
\node [font=\fontsize{18.2pt}{23.7pt}\selectfont, inner xsep=0.080cm, inner ysep=0.085cm, rounded corners=0.020cm] at (0.5,0.5) {1};
\node [font=\fontsize{18.2pt}{23.7pt}\selectfont, inner xsep=0.080cm, inner ysep=0.085cm, rounded corners=0.020cm] at (1.5,0.5) {2};
\node [font=\fontsize{18.2pt}{23.7pt}\selectfont, inner xsep=0.080cm, inner ysep=0.085cm, rounded corners=0.020cm] at (2.5,0.5) {3};
\node [font=\fontsize{18.2pt}{23.7pt}\selectfont, inner xsep=0.080cm, inner ysep=0.085cm, rounded corners=0.020cm] at (3.5,0.5) {4};
\node [font=\fontsize{18.2pt}{23.7pt}\selectfont, inner xsep=0.080cm, inner ysep=0.085cm, rounded corners=0.020cm] at (4.5,0.5) {5};
\node [font=\fontsize{18.2pt}{23.7pt}\selectfont, inner xsep=0.080cm, inner ysep=0.085cm, rounded corners=0.020cm] at (5.5,0.5) {6};
\node [font=\fontsize{18.2pt}{23.7pt}\selectfont, inner xsep=0.080cm, inner ysep=0.085cm, rounded corners=0.020cm] at (6.5,0.5) {7};
\node [font=\fontsize{18.2pt}{23.7pt}\selectfont, inner xsep=0.080cm, inner ysep=0.085cm, rounded corners=0.020cm] at (-0.5,1.5) {1};
\node [font=\fontsize{18.2pt}{23.7pt}\selectfont, inner xsep=0.080cm, inner ysep=0.085cm, rounded corners=0.020cm] at (0.5,1.5) {0};
\node [font=\fontsize{18.2pt}{23.7pt}\selectfont, inner xsep=0.080cm, inner ysep=0.085cm, rounded corners=0.020cm] at (3.5,1.5) {5};
\node [font=\fontsize{18.2pt}{23.7pt}\selectfont, inner xsep=0.080cm, inner ysep=0.085cm, rounded corners=0.020cm] at (4.5,1.5) {4};
\node [font=\fontsize{18.2pt}{23.7pt}\selectfont, inner xsep=0.080cm, inner ysep=0.085cm, rounded corners=0.020cm] at (5.5,1.5) {7};
\node [font=\fontsize{18.2pt}{23.7pt}\selectfont, inner xsep=0.080cm, inner ysep=0.085cm, rounded corners=0.020cm] at (6.5,1.5) {6};
\node [font=\fontsize{18.2pt}{23.7pt}\selectfont, inner xsep=0.080cm, inner ysep=0.085cm, rounded corners=0.020cm] at (3.5,2.5) {6};
\node [font=\fontsize{18.2pt}{23.7pt}\selectfont, inner xsep=0.080cm, inner ysep=0.085cm, rounded corners=0.020cm] at (4.5,2.5) {7};
\node [font=\fontsize{18.2pt}{23.7pt}\selectfont, inner xsep=0.080cm, inner ysep=0.085cm, rounded corners=0.020cm] at (5.5,2.5) {4};
\node [font=\fontsize{18.2pt}{23.7pt}\selectfont, inner xsep=0.080cm, inner ysep=0.085cm, rounded corners=0.020cm] at (6.5,2.5) {5};
\node [font=\fontsize{18.2pt}{23.7pt}\selectfont, inner xsep=0.080cm, inner ysep=0.085cm, rounded corners=0.020cm] at (3.5,3.5) {7};
\node [font=\fontsize{18.2pt}{23.7pt}\selectfont, inner xsep=0.080cm, inner ysep=0.085cm, rounded corners=0.020cm] at (4.5,3.5) {6};

\end{circuitikz}
}%
\caption{Representation of the tasty chocolate bar $\CHN(f, 7, 3)$ with $f(i)=1+4\lfloor \frac{i}{4} \rfloor-\lfloor\frac{i}{2}\rfloor$ }
\label{fig:tasty_chn}
\end{figure}

As it is difficult to represent a chocolate bar in more than $3$ dimensions, in order to have a better representation of Chocolate Games in higher dimension, we use a representation starting from the game of Nim since Nim is equivalent to Rectangular Chocolate Game. Then the following Changing Head Nim and Multiple Changing Head Nim act as generalizations of Changing Height Chocolate Game in higher dimension. 

\begin{definition}[Nim]
    For $x_1,\ldots  ,x_k$, the Nim game consists of $k$ piles of $x_i$ for $1\le i\le k$ is denoted by $\MRCG(x_1,\ldots  ,x_k)$.
\end{definition}

\begin{definition}[Changing Head Nim]
    Let $f$ be a function from $\mathbb{Z}_{\ge 0}^k$ to $\mathbb{Z}_{\ge 0}$ for some nonnegative integer $k$. For $y$, $x_1,\ldots  ,x_k$, the Changing Head Nim consists of $k$ piles of $x_i$ for $0\le i\le k$ and the last pile whose size is $\min(f(x_1,\ldots  ,x_k),y)$ which can be changed when we play on another pile. We denote this by $\CHN(f,x_1,\ldots  ,x_k,  y)$.
\end{definition}

\begin{remark}
    Changing Height Chocolate Game is isomorphic to Changing Head Nim when $k=1$.
\end{remark}

By using the framework of Changing Head Nim, we can consider three dimensional Chocolate Games like shown in Fig. \ref{fig:3d_CHN}.

\begin{figure}[!ht]
    \centering
\resizebox{1\textwidth/4}{!}{%
\begin{tikzpicture}
	\draw [ fill={rgb,255:red,255; green,255; blue,255}, fill opacity=1] (0,1,0) -- (1,1,0) -- (1,1,1) -- (0,1,1) -- cycle;

	\draw [ fill={rgb,255:red,255; green,255; blue,255}, fill opacity=1] (1,1,0) -- (2,1,0) -- (2,1,1) -- (1,1,1) -- cycle;

	\draw [ fill={rgb,255:red,255; green,255; blue,255}, fill opacity=1] (2,1,0) -- (3,1,0) -- (3,1,1) -- (2,1,1) -- cycle;

	\draw[ fill={rgb,255:red,255; green,255; blue,255}, fill opacity=1] (3,1,0) -- (4,1,0) -- (4,1,1) -- (3,1,1) -- cycle;

    \draw[ fill={rgb,255:red,255; green,255; blue,255}, fill opacity=1] (5,0,0) -- (5,1,0) -- (5,1,1) -- (5,0,1) -- cycle;
	\draw[ fill={rgb,255:red,255; green,255; blue,255}, fill opacity=1] (4,1,0) -- (5,1,0) -- (5,1,1) -- (4,1,1) -- cycle;

	\draw[ fill={rgb,255:red,255; green,255; blue,255}, fill opacity=1] (0,1,1) -- (1,1,1) -- (1,1,2) -- (0,1,2) -- cycle;

	\draw[ fill={rgb,255:red,255; green,255; blue,255}, fill opacity=1] (1,1,1) -- (2,1,1) -- (2,1,2) -- (1,1,2) -- cycle;

	\draw[ fill={rgb,255:red,255; green,255; blue,255}, fill opacity=1] (2,1,1) -- (3,1,1) -- (3,1,2) -- (2,1,2) -- cycle;

	\draw[ fill={rgb,255:red,255; green,255; blue,255}, fill opacity=1] (3,1,1) -- (4,1,1) -- (4,1,2) -- (3,1,2) -- cycle;

    \draw[ fill={rgb,255:red,255; green,255; blue,255}, fill opacity=1] (5,0,1) -- (5,1,1) -- (5,1,2) -- (5,0,2) -- cycle;
	\draw[ fill={rgb,255:red,255; green,255; blue,255}, fill opacity=1] (4,1,1) -- (5,1,1) -- (5,1,2) -- (4,1,2) -- cycle;

	\draw[ fill={rgb,255:red,255; green,255; blue,255}, fill opacity=1] (0,1,2) -- (1,1,2) -- (1,1,3) -- (0,1,3) -- cycle;

	\draw[ fill={rgb,255:red,255; green,255; blue,255}, fill opacity=1] (1,1,2) -- (2,1,2) -- (2,1,3) -- (1,1,3) -- cycle;

	\draw[ fill={rgb,255:red,255; green,255; blue,255}, fill opacity=1] (2,1,2) -- (3,1,2) -- (3,1,3) -- (2,1,3) -- cycle;

	\draw[ fill={rgb,255:red,255; green,255; blue,255}, fill opacity=1] (3,1,2) -- (4,1,2) -- (4,1,3) -- (3,1,3) -- cycle;

    \draw[ fill={rgb,255:red,255; green,255; blue,255}, fill opacity=1] (5,0,2) -- (5,1,2) -- (5,1,3) -- (5,0,3) -- cycle;
	\draw[ fill={rgb,255:red,255; green,255; blue,255}, fill opacity=1] (4,1,2) -- (5,1,2) -- (5,1,3) -- (4,1,3) -- cycle;

	\draw[ fill={rgb,255:red,255; green,255; blue,255}, fill opacity=1] (0,1,3) -- (1,1,3) -- (1,1,4) -- (0,1,4) -- cycle;

	\draw[ fill={rgb,255:red,255; green,255; blue,255}, fill opacity=1] (1,1,3) -- (2,1,3) -- (2,1,4) -- (1,1,4) -- cycle;

	\draw[ fill={rgb,255:red,255; green,255; blue,255}, fill opacity=1] (2,1,3) -- (3,1,3) -- (3,1,4) -- (2,1,4) -- cycle;

	\draw[ fill={rgb,255:red,255; green,255; blue,255}, fill opacity=1] (3,1,3) -- (4,1,3) -- (4,1,4) -- (3,1,4) -- cycle;

    \draw[ fill={rgb,255:red,255; green,255; blue,255}, fill opacity=1] (5,0,3) -- (5,1,3) -- (5,1,4) -- (5,0,4) -- cycle;
	\draw[ fill={rgb,255:red,255; green,255; blue,255}, fill opacity=1] (4,1,3) -- (5,1,3) -- (5,1,4) -- (4,1,4) -- cycle;

	\draw[ fill={rgb,255:red,0; green,0; blue,0}, fill opacity=1] (0,1,4) -- (1,1,4) -- (1,1,5) -- (0,1,5) -- cycle;
	\draw[ fill={rgb,255:red,0; green,0; blue,0}, fill opacity=1] (0,0,5) -- (0,1,5) -- (1,1,5) -- (1,0,5) -- cycle;

	\draw[ fill={rgb,255:red,255; green,255; blue,255}, fill opacity=1] (1,1,4) -- (2,1,4) -- (2,1,5) -- (1,1,5) -- cycle;
	\draw[ fill={rgb,255:red,255; green,255; blue,255}, fill opacity=1] (1,0,5) -- (1,1,5) -- (2,1,5) -- (2,0,5) -- cycle;

	\draw[ fill={rgb,255:red,255; green,255; blue,255}, fill opacity=1] (2,1,4) -- (3,1,4) -- (3,1,5) -- (2,1,5) -- cycle;
	\draw[ fill={rgb,255:red,255; green,255; blue,255}, fill opacity=1] (2,0,5) -- (2,1,5) -- (3,1,5) -- (3,0,5) -- cycle;

	\draw[ fill={rgb,255:red,255; green,255; blue,255}, fill opacity=1] (3,1,4) -- (4,1,4) -- (4,1,5) -- (3,1,5) -- cycle;
	\draw[ fill={rgb,255:red,255; green,255; blue,255}, fill opacity=1] (3,0,5) -- (3,1,5) -- (4,1,5) -- (4,0,5) -- cycle;

    \draw[ fill={rgb,255:red,255; green,255; blue,255}, fill opacity=1] (5,0,4) -- (5,1,4) -- (5,1,5) -- (5,0,5) -- cycle;
	\draw[ fill={rgb,255:red,255; green,255; blue,255}, fill opacity=1] (4,1,4) -- (5,1,4) -- (5,1,5) -- (4,1,5) -- cycle;
	\draw[ fill={rgb,255:red,255; green,255; blue,255}, fill opacity=1] (4,0,5) -- (4,1,5) -- (5,1,5) -- (5,0,5) -- cycle;

	\draw[ fill={rgb,255:red,255; green,255; blue,255}, fill opacity=1](1,2,0) -- (2,2,0) -- (2,2,1) -- (1,2,1) -- cycle;

	\draw [ fill={rgb,255:red,255; green,255; blue,255}, fill opacity=1] (2,2,0) -- (3,2,0) -- (3,2,1) -- (2,2,1) -- cycle;

	\draw[ fill={rgb,255:red,255; green,255; blue,255}, fill opacity=1] (3,2,0) -- (4,2,0) -- (4,2,1) -- (3,2,1) -- cycle;

    \draw[ fill={rgb,255:red,255; green,255; blue,255}, fill opacity=1] (5,1,0) -- (5,2,0) -- (5,2,1) -- (5,1,1) -- cycle;
	\draw[ fill={rgb,255:red,255; green,255; blue,255}, fill opacity=1] (4,2,0) -- (5,2,0) -- (5,2,1) -- (4,2,1) -- cycle;

	\draw[ fill={rgb,255:red,255; green,255; blue,255}, fill opacity=1] (1,2,1) -- (2,2,1) -- (2,2,2) -- (1,2,2) -- cycle;
	\draw[ fill={rgb,255:red,255; green,255; blue,255}, fill opacity=1] (1,1,2) -- (1,2,2) -- (2,2,2) -- (2,1,2) -- cycle;

	\draw[ fill={rgb,255:red,255; green,255; blue,255}, fill opacity=1] (2,2,1) -- (3,2,1) -- (3,2,2) -- (2,2,2) -- cycle;

	\draw[ fill={rgb,255:red,255; green,255; blue,255}, fill opacity=1] (3,2,1) -- (4,2,1) -- (4,2,2) -- (3,2,2) -- cycle;

    \draw[ fill={rgb,255:red,255; green,255; blue,255}, fill opacity=1] (5,1,1) -- (5,2,1) -- (5,2,2) -- (5,1,2) -- cycle;
	\draw[ fill={rgb,255:red,255; green,255; blue,255}, fill opacity=1] (4,2,1) -- (5,2,1) -- (5,2,2) -- (4,2,2) -- cycle;

	\draw[ fill={rgb,255:red,255; green,255; blue,255}, fill opacity=1] (2,2,2) -- (3,2,2) -- (3,2,3) -- (2,2,3) -- cycle;

	\draw[ fill={rgb,255:red,255; green,255; blue,255}, fill opacity=1] (3,2,2) -- (4,2,2) -- (4,2,3) -- (3,2,3) -- cycle;

    \draw[ fill={rgb,255:red,255; green,255; blue,255}, fill opacity=1] (5,1,2) -- (5,2,2) -- (5,2,3) -- (5,1,3) -- cycle;
	\draw[ fill={rgb,255:red,255; green,255; blue,255}, fill opacity=1] (4,2,2) -- (5,2,2) -- (5,2,3) -- (4,2,3) -- cycle;

	\draw[ fill={rgb,255:red,255; green,255; blue,255}, fill opacity=1] (2,2,3) -- (3,2,3) -- (3,2,4) -- (2,2,4) -- cycle;
	\draw[ fill={rgb,255:red,255; green,255; blue,255}, fill opacity=1] (2,1,4) -- (2,2,4) -- (3,2,4) -- (3,1,4) -- cycle;

	\draw[ fill={rgb,255:red,255; green,255; blue,255}, fill opacity=1] (3,2,3) -- (4,2,3) -- (4,2,4) -- (3,2,4) -- cycle;

    \draw[ fill={rgb,255:red,255; green,255; blue,255}, fill opacity=1] (5,1,3) -- (5,2,3) -- (5,2,4) -- (5,1,4) -- cycle;
	\draw[ fill={rgb,255:red,255; green,255; blue,255}, fill opacity=1] (4,2,3) -- (5,2,3) -- (5,2,4) -- (4,2,4) -- cycle;

	\draw[ fill={rgb,255:red,255; green,255; blue,255}, fill opacity=1] (3,2,4) -- (4,2,4) -- (4,2,5) -- (3,2,5) -- cycle;
	\draw[ fill={rgb,255:red,255; green,255; blue,255}, fill opacity=1] (3,1,5) -- (3,2,5) -- (4,2,5) -- (4,1,5) -- cycle;

    \draw[ fill={rgb,255:red,255; green,255; blue,255}, fill opacity=1] (5,1,4) -- (5,2,4) -- (5,2,5) -- (5,1,5) -- cycle;
	\draw[ fill={rgb,255:red,255; green,255; blue,255}, fill opacity=1] (4,2,4) -- (5,2,4) -- (5,2,5) -- (4,2,5) -- cycle;
	\draw[ fill={rgb,255:red,255; green,255; blue,255}, fill opacity=1] (4,1,5) -- (4,2,5) -- (5,2,5) -- (5,1,5) -- cycle;

	\draw[ fill={rgb,255:red,255; green,255; blue,255}, fill opacity=1](1,3,0) -- (2,3,0) -- (2,3,1) -- (1,3,1) -- cycle;
	\draw[ fill={rgb,255:red,255; green,255; blue,255}, fill opacity=1] (1,2,1) -- (1,3,1) -- (2,3,1) -- (2,2,1) -- cycle;

	\draw [ fill={rgb,255:red,255; green,255; blue,255}, fill opacity=1] (2,3,0) -- (3,3,0) -- (3,3,1) -- (2,3,1) -- cycle;

	\draw[ fill={rgb,255:red,255; green,255; blue,255}, fill opacity=1] (3,3,0) -- (4,3,0) -- (4,3,1) -- (3,3,1) -- cycle;

    \draw[ fill={rgb,255:red,255; green,255; blue,255}, fill opacity=1] (5,2,0) -- (5,3,0) -- (5,3,1) -- (5,2,1) -- cycle;
	\draw[ fill={rgb,255:red,255; green,255; blue,255}, fill opacity=1] (4,3,0) -- (5,3,0) -- (5,3,1) -- (4,3,1) -- cycle;

	\draw[ fill={rgb,255:red,255; green,255; blue,255}, fill opacity=1] (2,3,1) -- (3,3,1) -- (3,3,2) -- (2,3,2) -- cycle;
	\draw[ fill={rgb,255:red,255; green,255; blue,255}, fill opacity=1] (2,2,2) -- (2,3,2) -- (3,3,2) -- (3,2,2) -- cycle;

	\draw[ fill={rgb,255:red,255; green,255; blue,255}, fill opacity=1] (3,3,1) -- (4,3,1) -- (4,3,2) -- (3,3,2) -- cycle;

    \draw[ fill={rgb,255:red,255; green,255; blue,255}, fill opacity=1] (5,2,1) -- (5,3,1) -- (5,3,2) -- (5,2,2) -- cycle;
	\draw[ fill={rgb,255:red,255; green,255; blue,255}, fill opacity=1] (4,3,1) -- (5,3,1) -- (5,3,2) -- (4,3,2) -- cycle;

	\draw[ fill={rgb,255:red,255; green,255; blue,255}, fill opacity=1] (3,3,2) -- (4,3,2) -- (4,3,3) -- (3,3,3) -- cycle;
    \draw[ fill={rgb,255:red,255; green,255; blue,255}, fill opacity=1] (3,2,3) -- (3,3,3) -- (4,3,3) -- (4,2,3) -- cycle;
    
    \draw[ fill={rgb,255:red,255; green,255; blue,255}, fill opacity=1] (5,2,2) -- (5,3,2) -- (5,3,3) -- (5,2,3) -- cycle;
	\draw[ fill={rgb,255:red,255; green,255; blue,255}, fill opacity=1] (4,3,2) -- (5,3,2) -- (5,3,3) -- (4,3,3) -- cycle;
	\draw[ fill={rgb,255:red,255; green,255; blue,255}, fill opacity=1] (4,2,3) -- (4,3,3) -- (5,3,3) -- (5,2,3) -- cycle;

    \draw[ fill={rgb,255:red,255; green,255; blue,255}, fill opacity=1] (5,2,3) -- (5,3,3) -- (5,3,4) -- (5,2,4) -- cycle;
	\draw[ fill={rgb,255:red,255; green,255; blue,255}, fill opacity=1] (4,3,3) -- (5,3,3) -- (5,3,4) -- (4,3,4) -- cycle;
	\draw[ fill={rgb,255:red,255; green,255; blue,255}, fill opacity=1] (4,2,4) -- (4,3,4) -- (5,3,4) -- (5,2,4) -- cycle;

	\draw [ fill={rgb,255:red,255; green,255; blue,255}, fill opacity=1] (2,4,0) -- (3,4,0) -- (3,4,1) -- (2,4,1) -- cycle;
	\draw[ fill={rgb,255:red,255; green,255; blue,255}, fill opacity=1] (2,3,1) -- (2,4,1) -- (3,4,1) -- (3,3,1) -- cycle;

	\draw[ fill={rgb,255:red,255; green,255; blue,255}, fill opacity=1] (3,4,0) -- (4,4,0) -- (4,4,1) -- (3,4,1) -- cycle;
	\draw[ fill={rgb,255:red,255; green,255; blue,255}, fill opacity=1] (3,4,1) -- (3,4,1) -- (4,4,1) -- (4,3,1) -- cycle;

    \draw[ fill={rgb,255:red,255; green,255; blue,255}, fill opacity=1] (5,3,0) -- (5,4,0) -- (5,4,1) -- (5,3,1) -- cycle;
	\draw[ fill={rgb,255:red,255; green,255; blue,255}, fill opacity=1] (4,4,0) -- (5,4,0) -- (5,4,1) -- (4,4,1) -- cycle;
	\draw[ fill={rgb,255:red,255; green,255; blue,255}, fill opacity=1] (4,3,1) -- (4,4,1) -- (5,4,1) -- (5,3,1) -- cycle;

	\draw[ fill={rgb,255:red,255; green,255; blue,255}, fill opacity=1] (3,4,1) -- (4,4,1) -- (4,4,2) -- (3,4,2) -- cycle;
	\draw[ fill={rgb,255:red,255; green,255; blue,255}, fill opacity=1] (3,3,2) -- (3,4,2) -- (4,4,2) -- (4,3,2) -- cycle;

    \draw[ fill={rgb,255:red,255; green,255; blue,255}, fill opacity=1] (5,3,1) -- (5,4,1) -- (5,4,2) -- (5,3,2) -- cycle;
	\draw[ fill={rgb,255:red,255; green,255; blue,255}, fill opacity=1] (4,4,1) -- (5,4,1) -- (5,4,2) -- (4,4,2) -- cycle;
	\draw[ fill={rgb,255:red,255; green,255; blue,255}, fill opacity=1] (4,3,2) -- (4,4,2) -- (5,4,2) -- (5,3,2) -- cycle;

	\draw [ fill={rgb,255:red,255; green,255; blue,255}, fill opacity=1] (2,5,0) -- (3,5,0) -- (3,5,1) -- (2,5,1) -- cycle;
	\draw[ fill={rgb,255:red,255; green,255; blue,255}, fill opacity=1] (2,4,1) -- (2,5,1) -- (3,5,1) -- (3,4,1) -- cycle;

	\draw[ fill={rgb,255:red,255; green,255; blue,255}, fill opacity=1] (3,5,0) -- (4,5,0) -- (4,5,1) -- (3,5,1) -- cycle;
	\draw[ fill={rgb,255:red,255; green,255; blue,255}, fill opacity=1] (3,4,1) -- (3,5,1) -- (4,5,1) -- (4,4,1) -- cycle;

    \draw[ fill={rgb,255:red,255; green,255; blue,255}, fill opacity=1] (5,4,0) -- (5,5,0) -- (5,5,1) -- (5,4,1) -- cycle;
	\draw[ fill={rgb,255:red,255; green,255; blue,255}, fill opacity=1] (4,5,0) -- (5,5,0) -- (5,5,1) -- (4,5,1) -- cycle;
	\draw[ fill={rgb,255:red,255; green,255; blue,255}, fill opacity=1] (4,4,1) -- (4,5,1) -- (5,5,1) -- (5,4,1) -- cycle;

    \draw[ fill={rgb,255:red,255; green,255; blue,255}, fill opacity=1] (5,4,1) -- (5,5,1) -- (5,5,2) -- (5,4,2) -- cycle;
	\draw[ fill={rgb,255:red,255; green,255; blue,255}, fill opacity=1] (4,5,1) -- (5,5,1) -- (5,5,2) -- (4,5,2) -- cycle;
	\draw[ fill={rgb,255:red,255; green,255; blue,255}, fill opacity=1] (4,4,2) -- (4,5,2) -- (5,5,2) -- (5,4,2) -- cycle;

\end{tikzpicture}
}%
    \caption{Representation of $\CHN(f,4,4,4)$ with $f(x_1,x_2)=\left\lfloor \frac{2x_1}{\max(5-x_2,1)}\right\rfloor$}
    \label{fig:3d_CHN}
\end{figure}

The following game is a further generalization of Chocolate Game.

\begin{definition}[Multiple Changing Head Nim]
    Let $f_1,\ldots ,f_{\ell}$ 
    be functions from $\mathbb{Z}_{\ge 0}^k$ to $\mathbb{Z}_{\ge 0}$ for some nonnegative integer $k$. For $y_1,\ldots ,y_{\ell}$, $x_1,\ldots ,x_k$, the Multiple Changing Head Nim consists of $k$ piles $x_i$ for $1\le i\le k$ and ${\ell}$ piles whose sizes are $\min(f_j(x_1,\ldots  ,x_k),y_j)$ for $1\le j\le {\ell}$ which can be  changed  when we play on another pile. We denote this by $\MCHN(f_1,\ldots ,f_{\ell},x_1,\ldots  ,x_k,  y_1,\ldots ,y_{\ell})$.
\end{definition}

\begin{remark}
    Multiple Changing Head Nim is isomorphic to Changing Head Nim when $\ell=1$.
\end{remark}

\begin{definition}
\label{def:tastyhigher}
    Let $f$ be a function from $\mathbb{Z}_{\ge 0}$ to $\mathbb{Z}_{\ge 0}.$
    
    $\CHCG(f,x,y)$ is {\em tasty} if for every $0\le x' \le x$ and $0 \le y' \le y$, $$g(\CHCG(f,x',y'))  = x'\oplus \min(f(x'),y').$$
    
    Let $f_1,\ldots,f_\ell$ be $\ell$ functions from $\mathbb{Z}_{\ge 0}^k$ to $\mathbb{Z}_{\ge 0}.$
    
    Similary, $\CHN(f_1,x_1,\ldots,x_k,y_1)$ is {\em tasty} if for every $0\le x'_1 \le x_1, \ldots,0\le x'_k \le x_k,\ldots,0 \le y'_1 \le y_1$, $$g(\CHN(f_1,x'_1,\ldots,x'_k,y'_1))  = \min(f_1(x'_1,\ldots,x'_k),y'_1)\oplus x'_1\oplus\cdots\oplus x'_k.$$
    Similary, $\MCHN(f_1,\ldots,f_\ell,x_1,\ldots,x_k,y_1,\ldots,y_\ell)$ is {\em tasty} if for every $0\le x'_1 \le x_1, \ldots,0\le x'_k \le x_k,\ldots,0 \le y'_1 \le y_1,\ldots, 0 \le y'_\ell \le y_\ell$, 
    \begin{align*}
        &g(\MCHN(f_1,\ldots,f_\ell,x'_1,\ldots,x'_k,y'_1,\ldots,y'_\ell))  \\
        =& \min(f_1(x'_1,\ldots,x'_k),y'_1)\oplus\cdots\oplus \min(f_\ell(x'_1,\ldots,x'_k),y'_\ell)\oplus x'_1\oplus\cdots\oplus x'_k.
    \end{align*}
\end{definition}

\begin{definition}
    A function $f$ from $\mathbb{Z}_{\ge 0}^k$ to $\mathbb{Z}_{\ge 0}$ is monotonous according to all dimensions, if for all $i\le k$ and nonnegative integers $x_1,\ldots,x_k$ the function $F_{i,x_1,\ldots, x_k}(z)=f(x_1,\ldots,x_{i-1},zn,x_{i+1},\ldots,x_k)$ is monotonous.
\end{definition}





\section{Functions that satisfy the tasty condition}
\label{sec:tasty}

In this section, we show some results for the functions satisfying the tasty condition. First we show that some operators give us function satisfying the tasty condition if the entries satisfy the tasty condition. We also show some examples of functions satisfying the tasty condition and we show a form in which all functions satisfying the tasty condition can be written.

\begin{theorem}
\label{theorem:mintasty}
    For any functions $f_1,f_2$, if $f_1$ and $f_2$ satisfy the tasty condition then the function $F(z) = \min(f_1(z),f_2(z))$ satisfies the tasty condition.
\end{theorem}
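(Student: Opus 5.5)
The plan is to verify the tasty condition for $F$ directly from Definition~\ref{def:conditionCB}, using one monotonicity fact about floors. The key observation is that for any fixed $m\ge 0$, the map $w\mapsto \lfloor w/2^m\rfloor$ is non-decreasing. Consequently it commutes with minimum:
$$\left\lfloor \frac{\min(a,b)}{2^m}\right\rfloor=\min\left(\left\lfloor \frac{a}{2^m}\right\rfloor,\left\lfloor \frac{b}{2^m}\right\rfloor\right)$$
for all $a,b\in\mathbb{Z}_{\ge 0}$. To see this, say $a\le b$; then $\lfloor a/2^m\rfloor\le\lfloor b/2^m\rfloor$, so both sides equal $\lfloor a/2^m\rfloor$.

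With this identity, the argument is one line. Take $z,z'\in\mathbb{Z}_{\ge0}$ and a positive integer $i$ with $\lfloor z/2^i\rfloor=\lfloor z'/2^i\rfloor$. Since $f_1$ satisfies the tasty condition, $\lfloor f_1(z)/2^{i-1}\rfloor=\lfloor f_1(z')/2^{i-1}\rfloor$, and likewise for $f_2$. Applying the identity with $m=i-1$ gives
$$\left\lfloor \frac{F(z)}{2^{i-1}}\right\rfloor=\min\left(\left\lfloor \frac{f_1(z)}{2^{i-1}}\right\rfloor,\left\lfloor \frac{f_2(z)}{2^{i-1}}\right\rfloor\right)=\min\left(\left\lfloor \frac{f_1(z')}{2^{i-1}}\right\rfloor,\left\lfloor \frac{f_2(z')}{2^{i-1}}\right\rfloor\right)=\left\lfloor \frac{F(z')}{2^{i-1}}\right\rfloor,$$
which is exactly the tasty condition for $F$.

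The statement is written for one variable, but the same argument handles $f_1,f_2$ defined on $\mathbb{Z}_{\ge0}^n$. We fix the other coordinates $x_1,\ldots,x_{k-1},x_{k+1},\ldots,x_n$ and vary only the $k$-th coordinate between $z$ and $z'$, and the computation above goes through unchanged.

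There is no real obstacle. The only point needing care is the commutation of the floor with $\min$, and that follows immediately from monotonicity. The same reasoning shows that $\max(f_1,f_2)$ also satisfies the tasty condition, which may be worth noting alongside the theorem.
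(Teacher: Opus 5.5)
Your proof is correct and takes the same route as the paper: the paper also applies the tasty condition to $f_1$ and $f_2$ separately, then runs the same chain of equalities through $\lfloor \min(a,b)/2^{i-1}\rfloor=\min(\lfloor a/2^{i-1}\rfloor,\lfloor b/2^{i-1}\rfloor)$. You additionally justify this commutation from the monotonicity of $w\mapsto\lfloor w/2^{m}\rfloor$ and handle the multivariate case explicitly, whereas the paper uses the identity without comment and writes only the one-variable case.
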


\begin{proof}
    We take $z,z',i$, such that $\left\lfloor \frac{z}{2^i} \right\rfloor=\left\lfloor \frac{z'}{2^i} \right\rfloor$.

    Then, $\left\lfloor \frac{f_1(z)}{2^{i-1}} \right\rfloor=\left\lfloor \frac{f_1(z')}{2^{i-1}} \right\rfloor$ as $f_1$ satisfies the tasty condition and $\left\lfloor \frac{f_2(z)}{2^{i-1}} \right\rfloor=\left\lfloor \frac{f_2(z')}{2^{i-1}} \right\rfloor$ as $f$ satisfies the tasty condition.

    Thus, 
    \begin{align*}
        \left\lfloor \frac{\min(f_1(z),f_2(z))}{2^{i-1}} \right\rfloor=&\min\left(\left\lfloor \frac{f_1(z)}{2^{i-1}} \right\rfloor, \left\lfloor \frac{f_2(z)}{2^{i-1}} \right\rfloor\right) \\
        =&\min\left(\left\lfloor \frac{f_1(z')}{2^{i-1}} \right\rfloor, \left\lfloor \frac{f_2(z')}{2^{i-1}} \right\rfloor\right) \\
        =&\left\lfloor \frac{\min(f_1(z'),f_2(z'))}{2^{i-1}} \right\rfloor
    \end{align*}
     and  $F(z) = \min(f_1(z),f_2(z))$ satisfies the tasty condition.
\end{proof}

\begin{theorem}
\label{theorem:maxtasty}
    For any functions $f_1,f_2$, if $f_1$ and $f_2$ satisfy the tasty condition then the function $F(z) = \max(f_1(z),f_2(z))$ satisfies the tasty condition.
\end{theorem}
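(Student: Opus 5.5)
The argument will follow the proof of Theorem~\ref{theorem:mintasty} almost word for word, with $\min$ replaced by $\max$. The only fact needed is that, for a fixed positive integer $m$ (here $m=2^{i-1}$), the map $a\mapsto \lfloor a/m\rfloor$ is monotonically non-decreasing on $\mathbb{Z}_{\ge 0}$. Consequently it commutes with $\max$:
\[
\left\lfloor \frac{\max(a,b)}{m}\right\rfloor=\max\left(\left\lfloor \frac{a}{m}\right\rfloor,\left\lfloor \frac{b}{m}\right\rfloor\right).
\]
To see this, suppose without loss of generality that $a\ge b$. Then the left side is $\lfloor a/m\rfloor$, and by monotonicity $\lfloor a/m\rfloor\ge\lfloor b/m\rfloor$, so the right side is also $\lfloor a/m\rfloor$.

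Concretely, fix $z,z'$ and a positive integer $i$ with $\lfloor z/2^i\rfloor=\lfloor z'/2^i\rfloor$. If the functions are of several variables, also fix the remaining coordinates $x_1,\ldots,x_n$ and the position $k$ at which $z$ and $z'$ are substituted, exactly as in Definition~\ref{def:conditionCB}. Since $f_1$ satisfies the tasty condition, we get $\lfloor f_1(z)/2^{i-1}\rfloor=\lfloor f_1(z')/2^{i-1}\rfloor$, and likewise for $f_2$. Then we chain equalities:
\begin{align*}
\left\lfloor \frac{\max(f_1(z),f_2(z))}{2^{i-1}} \right\rfloor
&=\max\left(\left\lfloor \frac{f_1(z)}{2^{i-1}} \right\rfloor, \left\lfloor \frac{f_2(z)}{2^{i-1}} \right\rfloor\right)\\
&=\max\left(\left\lfloor \frac{f_1(z')}{2^{i-1}} \right\rfloor, \left\lfloor \frac{f_2(z')}{2^{i-1}} \right\rfloor\right)\\
&=\left\lfloor \frac{\max(f_1(z'),f_2(z'))}{2^{i-1}} \right\rfloor .
\end{align*}
This is precisely the tasty condition for $F$.

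No step is a real obstacle. The only point requiring care is justifying that the floor commutes with $\max$, which is the monotonicity remark above. The multivariate case needs nothing extra, because the other coordinates are held fixed throughout.
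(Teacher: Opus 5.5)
Your proof is correct and is essentially the paper's argument: apply the tasty condition to $f_1$ and $f_2$ separately, then chain the equalities using the fact that $\lfloor \cdot/2^{i-1}\rfloor$ commutes with $\max$. The paper uses that commutation without comment, so your justification of it via monotonicity of the floor map is a welcome addition.
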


\begin{proof}
    We take $z,z',i$, such that $\left\lfloor \frac{z}{2^i} \right\rfloor=\left\lfloor \frac{z'}{2^i} \right\rfloor$.

    Then, $\left\lfloor \frac{f_1(z)}{2^{i-1}} \right\rfloor=\left\lfloor \frac{f_1(z')}{2^{i-1}} \right\rfloor$ as $f_1$ satisfies the tasty condition and $\left\lfloor \frac{f_2(z)}{2^{i-1}} \right\rfloor=\left\lfloor \frac{f_2(z')}{2^{i-1}} \right\rfloor$ as $f$ satisfies the tasty condition.

    Thus, 
    \begin{align*}
        \left\lfloor \frac{\max(f_1(z),f_2(z))}{2^{i-1}} \right\rfloor=&\max\left(\left\lfloor \frac{f_1(z)}{2^{i-1}} \right\rfloor, \left\lfloor \frac{f_2(z)}{2^{i-1}} \right\rfloor\right) \\
        =&\max\left(\left\lfloor \frac{f_1(z')}{2^{i-1}} \right\rfloor, \left\lfloor \frac{f_2(z')}{2^{i-1}} \right\rfloor\right) \\
        =&\left\lfloor \frac{\max(f_1(z'),f_2(z'))}{2^{i-1}} \right\rfloor
    \end{align*}
    
    and  $F(z) = \max(f_1(z),f_2(z))$ satisfies the tasty condition.
\end{proof}

\begin{theorem}
\label{theorem:tastyoperator}
    For any functions $f_1,f_2$ and for any operator $*$ applying an binary operation bit by bit and such as $0*0=0$, if $f_1$ and $f_2$ satisfy the tasty condition then the function $F(z) = f_1(z)* f_2(z)$ satisfies the tasty condition.
\end{theorem}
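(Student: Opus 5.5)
The proof follows the same pattern as Theorems \ref{theorem:mintasty} and \ref{theorem:maxtasty}. The key fact is that a bitwise operator commutes with discarding low-order bits. Fix $z,z'$ and a positive integer $i$ with $\left\lfloor \frac{z}{2^i} \right\rfloor=\left\lfloor \frac{z'}{2^i} \right\rfloor$. In the multivariate setting, all other coordinates are held fixed and only the $k$-th coordinate varies, exactly as in Definition \ref{def:conditionCB}. Since $f_1$ and $f_2$ satisfy the tasty condition, we get
$$\left\lfloor \frac{f_1(z)}{2^{i-1}} \right\rfloor=\left\lfloor \frac{f_1(z')}{2^{i-1}} \right\rfloor \quad\text{and}\quad \left\lfloor \frac{f_2(z)}{2^{i-1}} \right\rfloor=\left\lfloor \frac{f_2(z')}{2^{i-1}} \right\rfloor .$$

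The central step is the identity
$$\left\lfloor \frac{a*b}{2^{m}} \right\rfloor=\left\lfloor \frac{a}{2^{m}} \right\rfloor*\left\lfloor \frac{b}{2^{m}} \right\rfloor$$
for all $a,b\in\mathbb{Z}_{\ge 0}$ and $m\ge 0$. To prove it, write $a=\sum_j a_j2^j$ and $b=\sum_j b_j2^j$. By definition of a bitwise operator, $a*b=\sum_j (a_j*b_j)2^j$. The hypothesis $0*0=0$ is what makes this well defined: only finitely many bits $a_j*b_j$ are nonzero, so $a*b$ is a genuine nonnegative integer. Dividing by $2^m$ and taking the floor deletes bits $0,\ldots,m-1$ and shifts the rest down. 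This gives $\sum_{j\ge m}(a_j*b_j)2^{j-m}$, which is exactly the bitwise result of applying $*$ to the shifted numbers $\lfloor a/2^m\rfloor$ and $\lfloor b/2^m\rfloor$.

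Applying the identity with $m=i-1$ gives the chain
\begin{align*}
\left\lfloor \frac{F(z)}{2^{i-1}} \right\rfloor
&=\left\lfloor \frac{f_1(z)}{2^{i-1}} \right\rfloor*\left\lfloor \frac{f_2(z)}{2^{i-1}} \right\rfloor\\
&=\left\lfloor \frac{f_1(z')}{2^{i-1}} \right\rfloor*\left\lfloor \frac{f_2(z')}{2^{i-1}} \right\rfloor
=\left\lfloor \frac{F(z')}{2^{i-1}} \right\rfloor ,
\end{align*}
which is the tasty condition for $F$. It also follows that $F$ maps into $\mathbb{Z}_{\ge 0}$, again because $0*0=0$.

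No step here is really hard. The only point needing care is the bit-shift identity, in particular making precise what ``applying a binary operation bit by bit'' means and why $0*0=0$ is required: without it, infinitely many leading zero bits would turn into ones. I would state the identity as a short lemma before the main chain of equalities.
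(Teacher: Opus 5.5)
Your proof is correct and follows the paper's argument: you derive the two floor equalities from the tasty condition for $f_1$ and $f_2$, then use that floor division by $2^{i-1}$ commutes with a bitwise operator. The only difference is that you actually prove this bit-shift identity and explain the role of $0*0=0$. The paper just asserts it informally, saying that ``dividing by a power of $2$ is just moving the digits.''
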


\begin{remark}
We can only consider operators such that $0*0=0$, because if not by applying bit by bit we obtain an infinite result as we can always extend the binary writing by adding zeros.    
\end{remark}

\begin{proof}
    We take $z,z',i$, such that $\left\lfloor \frac{z}{2^i} \right\rfloor=\left\lfloor \frac{z'}{2^i} \right\rfloor$.

    Then, $\left\lfloor \frac{f_1(z)}{2^{i-1}} \right\rfloor=\left\lfloor \frac{f_1(z')}{2^{i-1}} \right\rfloor$ as $f_1$ satisfies the tasty condition and $\left\lfloor \frac{f_2(z)}{2^{i-1}} \right\rfloor=\left\lfloor \frac{f_2(z')}{2^{i-1}} \right\rfloor$ as $f$ satisfies the tasty condition.

    Thus, as dividing by a power of $2$ is just moving the digits in the binary expression, when we divide a number calculated by our operator by a power of $2$, it is the same thing as dividing each of the component of the operator by the same value and thus, $$\left\lfloor \frac{f_1(z)* f_2(z)}{2^{i-1}} \right\rfloor=\left\lfloor \frac{f_1(z)}{2^{i-1}} \right\rfloor* \left\lfloor \frac{f_2(z)}{2^{i-1}} \right\rfloor=\left\lfloor \frac{f_1(z')}{2^{i-1}} \right\rfloor * \left\lfloor \frac{f_2(z')}{2^{i-1}} \right\rfloor=\left\lfloor \frac{f_1(z')* f_2(z')}{2^{i-1}} \right\rfloor$$ and  $F(z) = f_1(z)* f_2(z)$ satisfies the tasty condition.
\end{proof}

\begin{theorem}
\label{theorem:nimsumtasty}
    For any functions $f_1,f_2$, if $f_1$ and $f_2$ satisfy the tasty condition then the function $F(z) = f_1(z)\oplus f_2(z)$ satisfies the tasty condition.
\end{theorem}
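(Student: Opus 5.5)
This is a direct corollary of Theorem~\ref{theorem:tastyoperator}. The Nim--sum $\oplus$ is bitwise XOR, a binary operation applied bit by bit to the binary expansions of its two arguments. It satisfies $0\oplus 0=0$, so it belongs to the class of operators $*$ allowed there. Taking $*=\oplus$ in Theorem~\ref{theorem:tastyoperator} with the given $f_1,f_2$ then gives immediately that $F(z)=f_1(z)\oplus f_2(z)$ satisfies the tasty condition.

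To make the argument self-contained, I would also record the direct check, which follows the same template as the proofs of Theorems~\ref{theorem:mintasty} and \ref{theorem:maxtasty}. Fix $z,z'$ and a positive integer $i$ with $\lfloor z/2^i\rfloor=\lfloor z'/2^i\rfloor$. Since $f_1$ and $f_2$ satisfy the tasty condition,
\[
\left\lfloor f_1(z)/2^{i-1}\right\rfloor=\left\lfloor f_1(z')/2^{i-1}\right\rfloor
\quad\text{and}\quad
\left\lfloor f_2(z)/2^{i-1}\right\rfloor=\left\lfloor f_2(z')/2^{i-1}\right\rfloor .
\]
Next I would use the identity
\[
\left\lfloor \frac{a\oplus b}{2^{m}}\right\rfloor=\left\lfloor \frac{a}{2^{m}}\right\rfloor\oplus\left\lfloor \frac{b}{2^{m}}\right\rfloor ,
\]
valid for all $a,b,m\ge 0$. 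It holds because dividing by $2^m$ and taking the floor just deletes the lowest $m$ binary digits, and XOR acts independently on each digit position. Applying it with $m=i-1$ on both sides and substituting the equalities above yields $\lfloor F(z)/2^{i-1}\rfloor=\lfloor F(z')/2^{i-1}\rfloor$.

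If the functions are multivariate, as in Definition~\ref{def:conditionCB}, the same computation applies verbatim with all coordinates other than the $k$-th held fixed. There is essentially no obstacle here. The only point needing a word of justification is the shift identity for XOR, and that is exactly what the proof of Theorem~\ref{theorem:tastyoperator} already uses.
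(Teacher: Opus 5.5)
Your proposal is correct and matches the paper, which proves this as a one-line corollary of Theorem~\ref{theorem:tastyoperator} with $*=\oplus$, using $0\oplus 0=0$. Your additional direct check via the identity $\lfloor (a\oplus b)/2^{m}\rfloor=\lfloor a/2^{m}\rfloor\oplus\lfloor b/2^{m}\rfloor$ is also correct; it simply unpacks that theorem's proof for this particular operator.
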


\begin{proof}
As Nim--sum is an operator applying a binary operation bit by bit with $0\oplus0=0$, by Theorem~\ref{theorem:tastyoperator} $F(z) = f_1(z)\oplus f_2(z)$ satisfies the tasty condition.


\end{proof}

\begin{theorem}
\label{theorem:csttasty}
    For any nonnegative integer $m$,  the function $F(z) = m$ satisfies the tasty condition.
\end{theorem}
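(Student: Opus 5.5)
This is a direct verification from Definition~\ref{def:conditionCB}, in the same style as the proofs of Theorems~\ref{theorem:mintasty} and~\ref{theorem:maxtasty}. We fix an arbitrary dimension $n$, an arbitrary coordinate $k$, arbitrary values $x_1,\ldots,x_n$ for the other coordinates, a positive integer $i$, and $z,z'\in\mathbb{Z}_{\ge 0}$ with $\left\lfloor \frac{z}{2^i} \right\rfloor=\left\lfloor \frac{z'}{2^i} \right\rfloor$. We must show
\[
\left\lfloor \frac{F(x_1,\ldots,x_{k-1},z,x_{k+1},\ldots,x_n)}{2^{i-1}} \right\rfloor=\left\lfloor \frac{F(x_1,\ldots,x_{k-1},z',x_{k+1},\ldots,x_n)}{2^{i-1}} \right\rfloor .
\]

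Since $F$ is constant, both arguments of the floor equal $m$. Both sides are therefore $\left\lfloor \frac{m}{2^{i-1}} \right\rfloor$, and the equality holds trivially. The hypothesis on $z,z'$ is not even used. Since $n$, $k$, the $x_j$, $i$, $z$ and $z'$ were arbitrary, $F$ satisfies the tasty condition.

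There is no real obstacle. The only points needing care are that $F$ indeed maps into $\mathbb{Z}_{\ge 0}$ (true because $m\ge 0$), and that the statement covers constant functions of any number of variables, not only of one variable $z$. Both follow immediately from the argument above.
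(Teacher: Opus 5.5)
Your proof is correct and matches the paper's: both simply observe that each side of the required equality is $\left\lfloor \frac{m}{2^{i-1}} \right\rfloor$, so the hypothesis on $z,z'$ is never used. Your explicit handling of an arbitrary number of variables is a harmless, slightly more careful generalization of the paper's one-variable phrasing.
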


\begin{proof}
    We take $z,z',i$, such that $\left\lfloor \frac{z}{2^i} \right\rfloor=\left\lfloor \frac{z'}{2^i} \right\rfloor$.

    Then, $\left\lfloor \frac{m}{2^{i-1}} \right\rfloor=\left\lfloor \frac{m}{2^{i-1}} \right\rfloor$ and $F$ satisfies the tasty condition.
\end{proof}

\begin{theorem}
\label{theorem:floortasty}
    For any nonnegative integer $k$,  the function $F(z) = \left\lfloor \frac{z}{2k} \right\rfloor$ satisfies the tasty condition.
\end{theorem}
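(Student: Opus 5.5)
The plan is a direct verification from Definition~\ref{def:conditionCB}, in the same style as the proofs of Theorems~\ref{theorem:mintasty}--\ref{theorem:csttasty}. I read the statement literally, with denominator $2k$ and $k \ge 1$; for $k=0$ the expression is undefined. The idea is to rewrite $\left\lfloor F(z)/2^{i-1}\right\rfloor$ as a function of $\left\lfloor z/2^i\right\rfloor$ alone. The hypothesis of the tasty condition then gives the conclusion immediately.

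First, I will take $z,z',i$ with $i\ge 1$ and $\left\lfloor \frac{z}{2^i} \right\rfloor=\left\lfloor \frac{z'}{2^i} \right\rfloor$. The only tool needed is the nested floor identity $\left\lfloor \lfloor a/m\rfloor / n\right\rfloor = \lfloor a/(mn)\rfloor$ for a nonnegative integer $a$ and positive integers $m,n$. Applying it twice gives
\begin{equation*}
\left\lfloor \frac{F(z)}{2^{i-1}} \right\rfloor
= \left\lfloor \frac{\lfloor z/(2k)\rfloor}{2^{i-1}} \right\rfloor
= \left\lfloor \frac{z}{2^{i}k} \right\rfloor
= \left\lfloor \frac{\lfloor z/2^{i}\rfloor}{k} \right\rfloor .
\end{equation*}

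The right-hand side depends on $z$ only through $\left\lfloor z/2^i\right\rfloor$. Therefore the same chain with $z'$ in place of $z$ yields the same value, which is exactly the required equality $\left\lfloor \frac{F(z)}{2^{i-1}} \right\rfloor=\left\lfloor \frac{F(z')}{2^{i-1}} \right\rfloor$.

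There is no real obstacle. The only point needing care is the justification of the nested floor identity, which I would state briefly: write $a = q m + r$ with $0\le r<m$ and $q = sn + t$ with $0\le t < n$. Then $a = s\,mn + (tm + r)$, and $0 \le tm+r \le (n-1)m + m-1 < mn$, so $\lfloor a/(mn)\rfloor = s$.

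I would also remark on the role of the factor $2$ in the denominator. It is what makes the argument work: with $F(z)=\lfloor z/k\rfloor$ one would only reach $\lfloor z/(2^{i-1}k)\rfloor$, which is not controlled by $\lfloor z/2^i\rfloor$. For instance, $k=1$ gives $F(z)=z$, and this fails the tasty condition at $i=1$ with $z=0$, $z'=1$.
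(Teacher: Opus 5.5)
Your proposal is correct and follows essentially the same route as the paper: both reduce $\lfloor F(z)/2^{i-1}\rfloor$ to $\lfloor \frac{1}{k}\lfloor z/2^i\rfloor \rfloor$, which depends on $z$ only through $\lfloor z/2^i\rfloor$. The paper obtains this identity through a sandwich of inequalities and tacitly drops the inner floor in $F(z)$, whereas your division-with-remainder proof of the nested floor identity handles both floors explicitly; your restriction to $k\ge 1$ is also the right reading of the statement.
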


\begin{proof}
    We take $z,z',i$, such that $\left\lfloor \frac{z}{2^i} \right\rfloor=\left\lfloor \frac{z'}{2^i} \right\rfloor$.
    Then, $$\left\lfloor \frac{\frac{z}{2k}}{2^{i-1}} \right\rfloor=\left\lfloor \frac{z}{k2^{i}} \right\rfloor = \left\lfloor \left\lfloor\frac{1}{k}\left\lfloor \frac{z}{2^i} \right\rfloor \right\rfloor+ \frac{\frac{z}{2^i}-k\left\lfloor\frac{1}{k}\left\lfloor \frac{z}{2^i} \right\rfloor \right\rfloor}{k} \right\rfloor = \left\lfloor \frac{1}{k}\left\lfloor \frac{z}{2^i} \right\rfloor \right\rfloor \le \frac{1}{k}\left\lfloor \frac{z}{2^i} \right\rfloor=\frac{1}{k}\left\lfloor \frac{z'}{2^i} \right\rfloor$$ and $$\left\lfloor \frac{\frac{z}{2k}}{2^{i-1}} \right\rfloor=\left\lfloor \frac{z}{k2^{i}} \right\rfloor = \left\lfloor \left\lfloor\frac{1}{k}\left\lfloor \frac{z}{2^i} \right\rfloor \right\rfloor+ \frac{\frac{z}{2^i}-k\left\lfloor\frac{1}{k}\left\lfloor \frac{z}{2^i} \right\rfloor \right\rfloor}{k} \right\rfloor = \left\lfloor \frac{1}{k}\left\lfloor \frac{z}{2^i} \right\rfloor \right\rfloor > \frac{1}{k}\left\lfloor \frac{z}{2^i}\right\rfloor -1 =\frac{1}{k}\left\lfloor \frac{z'}{2^i}\right\rfloor -1.$$ Thus, $$\left\lfloor \frac{\frac{z}{2k}}{2^{i-1}} \right\rfloor=\left\lfloor\frac{1}{k}\left\lfloor \frac{z'}{2^i}\right\rfloor\right\rfloor$$ and similary  $$\left\lfloor \frac{\frac{z'}{2k}}{2^{i-1}} \right\rfloor=\left\lfloor\frac{1}{k}\left\lfloor \frac{z'}{2^i}\right\rfloor\right\rfloor.$$ Therefore,  $$\left\lfloor \frac{\frac{z}{2k}}{2^{i-1}} \right\rfloor=\left\lfloor \frac{\frac{z'}{2k}}{2^{i-1}} \right\rfloor$$ and $F$ satisfies the tasty condition.
\end{proof}

\begin{theorem}
\label{theorem:bounded_fitting}
    For any function $f$ from $\mathbb{Z}_{\ge 0}$ to $\mathbb{Z}_{\ge 0}$ bounded by $2^k-1$, let $F=2^{k}-f(z)-1$. If $f$ satisfies the tasty condition, then $F$ satisfies the tasty condition too.
\end{theorem}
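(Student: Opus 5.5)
The key observation is that $F(z)=2^k-1-f(z)$ is exactly the bitwise complement of $f(z)$ within the lowest $k$ bits. Since $0\le f(z)\le 2^k-1$, we have $F(z)=(2^k-1)\oplus f(z)$, because subtracting from an all-ones string of length $k$ never borrows. We would like to combine Theorem~\ref{theorem:csttasty} (the constant $2^k-1$ satisfies the tasty condition) with Theorem~\ref{theorem:nimsumtasty}. That combination shows $F(z)=(2^k-1)\oplus f(z)$ satisfies the tasty condition, and the proof then reduces to justifying the identity $2^k-1-f(z)=(2^k-1)\oplus f(z)$.

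For the identity, write $f(z)=\sum_{j<k} b_j 2^j$ with $b_j\in\{0,1\}$. Then
\begin{equation*}
2^k-1-f(z)=\sum_{j<k}(1-b_j)2^j,
\end{equation*}
and $1-b_j=1\oplus b_j$ bitwise, which is exactly the Nim--sum with $2^k-1$. Here we use the hypothesis that $f$ is bounded by $2^k-1$, so no bit of $f(z)$ lies at position $k$ or above. This also gives $F(z)\ge 0$, so $F$ is indeed a function into $\mathbb{Z}_{\ge 0}$ as Definition~\ref{def:conditionCB} requires.

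As a sanity check, or as a direct alternative if one prefers not to route through the operator theorems, we can argue from the definition. Take $z,z',i$ with $\lfloor z/2^i\rfloor=\lfloor z'/2^i\rfloor$, so that $\lfloor f(z)/2^{i-1}\rfloor=\lfloor f(z')/2^{i-1}\rfloor$. Bits at positions $\ge i-1$ of $F(z)$ are the complements (within $k$ bits) of those of $f(z)$. Hence $\lfloor F(z)/2^{i-1}\rfloor=\lfloor (2^k-1)/2^{i-1}\rfloor\oplus\lfloor f(z)/2^{i-1}\rfloor$, and the same holds for $z'$, which gives the equality. The case $i-1\ge k$ is trivial since both sides are $0$.

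The only step requiring care is the complement identity, and in particular the role of the bound $f\le 2^k-1$. Everything else is a direct invocation of Theorems~\ref{theorem:csttasty} and \ref{theorem:nimsumtasty}.
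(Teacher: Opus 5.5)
Your proposal is correct and takes the same route as the paper, which also writes $F(z)=(2^k-1)\oplus f(z)$ using the bound $f\le 2^k-1$ and concludes via Theorem~\ref{theorem:csttasty} and Theorem~\ref{theorem:nimsumtasty}. Your bitwise justification of the complement identity, and the direct check from the definition, spell out a step the paper only asserts.
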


\begin{proof}
By Theorem~\ref{theorem:csttasty}, $2^k-1$ satisfies the tasty condition.
By Theorem~\ref{theorem:nimsumtasty}, $F'(z)=(2^k-1)\oplus f(z)$ satisfies the tasty condition.
As $f$ is bounded by $2^k-1$, $(2^k-1)\oplus f(z)=2^{k}-f(z)-1$ and $F$ satisfies the tasty condition.
\end{proof}

\begin{lemma}
\label{lemma:nimsumconsecutive}
 For any positive integer $n$ and nonnegative integer $i$, $(n\oplus (n-1))+1$ is a power of two and $$\left\lfloor \frac{n\oplus(n-1)}{2^{i}} \right\rfloor=\max(2^{\log_2 ((n\oplus (n-1))+1)-i}-1,0).$$    
\end{lemma}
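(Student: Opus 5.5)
The plan is to compute $n\oplus(n-1)$ explicitly from the binary expansion of $n$, and then read off both claims from that closed form.

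\textbf{Closed form for $n\oplus(n-1)$.} Since $n\ge 1$, let $t\ge 0$ be the number of trailing zeros of $n$, i.e.\ $2^t$ is the largest power of two dividing $n$. Write $n = 2^{t+1}a + 2^t$ for some nonnegative integer $a$. Then
\[
n-1 = 2^{t+1}a + (2^t - 1).
\]
In other words, subtracting one clears bit $t$ and sets bits $0,\ldots,t-1$, while leaving every bit above position $t$ unchanged. The higher bits therefore cancel in the Nim--sum, and
\[
n\oplus(n-1) = 2^t + (2^t-1) = 2^{t+1}-1.
\]
This gives the first claim immediately: $(n\oplus(n-1))+1 = 2^{t+1}$ is a power of two, and $\log_2((n\oplus(n-1))+1) = t+1$.

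\textbf{The floor identity.} It remains to show that
\[
\left\lfloor \frac{2^{t+1}-1}{2^i}\right\rfloor = \max(2^{t+1-i}-1,\,0).
\]
We split into two cases.
\begin{itemize}
\item If $i\le t+1$, then $2^{t+1}-1 = 2^i\left(2^{t+1-i}-1\right) + (2^i-1)$ with $0\le 2^i-1<2^i$. Hence the quotient is $2^{t+1-i}-1$, which is nonnegative, so it equals the maximum.
\item If $i> t+1$, then $0\le 2^{t+1}-1<2^i$, so the floor is $0$. Here $2^{t+1-i}<1$ makes $2^{t+1-i}-1$ negative, so the maximum is also $0$.
\end{itemize}

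The only genuinely nontrivial step is the bit-level description of $n-1$ in the first part; the floor computation is routine. That bit-level step is a standard borrow argument, so I expect no real obstacle beyond stating it carefully.
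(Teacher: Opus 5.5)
Your proof is correct and takes essentially the same route as the paper: a borrow argument on the binary expansion of $n$ shows that $n\oplus(n-1)$ is a block of ones, followed by an elementary floor computation. The differences are cosmetic. You identify the block explicitly as $2^{t+1}-1$ via the trailing zeros of $n$ and compute the floor by exact division with remainder, whereas the paper argues bit by bit that the Nim--sum is a ``chain of 1'' and then sandwiches the floor between two bounds.
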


\begin{proof}
    For any nonnegative integer $k$, if the $k$-th bit of $n \oplus (n - 1)$ is $1$ in the binary expression, then the $k$-th bit of $n$ and $n-1$ in the binary expressions are different. In addition, the $k$-th bit of $n$ and $n-1$ in the binary expressions are  different only when it is either the least significant bit or in the previous bit of $n$ is $0$ and of $n-1$ is $1$. Therefore, $n \oplus (n - 1)$ is a chain of $1$ and $(n \oplus 
    (n - 1)) + 1$ is a power of two.
    

   We have $$\left\lfloor \frac{n\oplus(n-1)}{2^{i}} \right\rfloor\le \frac{n\oplus(n-1)}{2^{i}}=\frac{(n\oplus(n-1))+1}{2^{i}}-\frac{1}{2^{i}}=2^{\log ((n\oplus (n-1))+1)-i}-\frac{1}{2^{i}}$$ and $$\left\lfloor \frac{n\oplus(n-1)}{2^{i}} \right\rfloor> \frac{n\oplus(n-1)}{2^{i}}-1=\frac{(n\oplus(n-1))+1}{2^{i}}-\frac{1}{2^{i}}-1=2^{\log ((n\oplus (n-1))+1)-i}-\frac{1}{2^{i}}-1.$$ As $\max(2^{\log_2 ((n\oplus (n-1))+1)-i}-1,0)$ is an integer between the two, we have $$\left\lfloor \frac{n\oplus(n-1)}{2^{i}} \right\rfloor=\max(2^{\log_2 ((n\oplus (n-1))+1)-i}-1,0).$$
\end{proof}

\begin{lemma}
\label{lemma:xinoplus}
    For any $n$, for all $x\le \frac{n\oplus (n-1)}{2}$, there exist a positive integer $m$ and $\epsilon_{x,1},\ldots,\epsilon_{x,m} \in \{0,1\}$, which satisfy $$x=\bigoplus_{i=1}^m \epsilon_{x,i} \left\lfloor \frac{n\oplus(n-1)}{2^{i}} \right\rfloor.$$
\end{lemma}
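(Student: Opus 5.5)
The plan is to use Lemma~\ref{lemma:nimsumconsecutive} to make the numbers $\left\lfloor \frac{n\oplus(n-1)}{2^{i}} \right\rfloor$ explicit, and then show they span all small integers under $\oplus$ by a triangular-basis argument. By Lemma~\ref{lemma:nimsumconsecutive} there is a positive integer $t$ with $n\oplus(n-1)=2^t-1$. For every $i\ge 1$ it gives
$$\left\lfloor \frac{n\oplus(n-1)}{2^{i}} \right\rfloor=\max(2^{t-i}-1,0).$$
So for $1\le i\le t-1$ the available terms are $2^{t-1}-1,\,2^{t-2}-1,\ldots,3,1$, and for $i\ge t$ the term is $0$. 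The hypothesis $x\le \frac{n\oplus(n-1)}{2}=\frac{2^t-1}{2}$ together with integrality gives $x\le 2^{t-1}-1$, that is, $x<2^{t-1}$.

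The main claim is that every integer $0\le x<2^{s}$ is a Nim--sum of a subset of $\{2^j-1 \mid 1\le j\le s\}$. I would prove this by induction on $x$, or equivalently on the position of its most significant bit.
\begin{itemize}
\item For $x=0$, take the empty subset, i.e.\ all $\epsilon$'s equal to $0$.
\item For $x>0$, let $2^{j-1}$ be the highest bit of $x$, so $1\le j\le s$. Then $x\oplus(2^j-1)$ clears bit $j-1$ and affects only lower bits. Hence $x\oplus(2^j-1)<2^{j-1}\le x$.
\item By induction, $x\oplus(2^j-1)$ is a Nim--sum of a subset of $\{2^{j'}-1 \mid j'\le j-1\}$. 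Adding $2^j-1$ to this subset gives $x$, and no term is repeated because the indices used are all $<j$.
\end{itemize}
Conceptually, the vectors $2^j-1$ are upper-triangular with respect to the bit basis, so they form a basis of $\mathbb{F}_2^{s}$.

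Applying the claim with $s=t-1$ and translating $j=t-i$, I set $m=\max(t-1,1)$. For each $i\le t-1$, I choose $\epsilon_{x,i}=1$ exactly when $2^{t-i}-1$ is used in the representation, and $\epsilon_{x,i}=0$ otherwise. The degenerate case $t=1$ forces $x=0$; there I take $m=1$ and $\epsilon_{x,1}=0$, which is fine since the term for $i=1$ is $0$ anyway.

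No step is a serious obstacle. The only care needed is the bookkeeping at the boundary: converting $x\le (2^t-1)/2$ into $x<2^{t-1}$, and handling $t=1$ so that $m$ stays a positive integer.
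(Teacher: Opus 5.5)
Your argument is correct, but it takes a different route from the paper.

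The paper inducts on $x$ by passing from $x-1$ to $x$. Applying Lemma~\ref{lemma:nimsumconsecutive} to $x$ itself, $x\oplus(x-1)$ is a block of ones $2^s-1$. Because $x\le \frac{n\oplus(n-1)}{2}$, this block is shorter than $n\oplus(n-1)$, so it equals some available term $\left\lfloor \frac{n\oplus(n-1)}{2^{k}}\right\rfloor$ with $k\ge 1$. A representation of $x$ is then obtained from one of $x-1$ by toggling $\epsilon_{x,k}$. The paper also treats a separate case $x\oplus(x-1)<x$, by XOR-ing the coefficient vectors of $x\oplus(x-1)$ and $x-1$. That split is not really needed, since the toggling step works in every case.

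You instead use Lemma~\ref{lemma:nimsumconsecutive} only once, applied to $n$, to identify the nonzero available terms as exactly $\{2^j-1 \mid 1\le j\le t-1\}$. You then eliminate from the most significant bit downward. In effect, you observe that these vectors are triangular with respect to the bit basis and hence form a basis of $\mathbb{F}_2^{t-1}$.

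What each route buys:
\begin{itemize}
\item The paper's argument is incremental: it climbs from $x-1$ to $x$ and reuses the "consecutive integers differ by a block of ones" fact at every step.
\item Yours is more transparent. It fixes $m=\max(t-1,1)$ uniformly, so coefficient vectors of different lengths never need implicit padding when they are combined.
\item You handle the degenerate case $t=1$ explicitly.
\item As a free by-product, you get uniqueness of the representation over the indices $1\le i\le t-1$.
\end{itemize}
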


\begin{proof}
    We prove this by induction on $x$.
    For $x=0$, we can take $m=1$ and $\epsilon_{x,1}=0$ giving us a Nim--sum which is $0$.
    
    We suppose that for all $x'<x$ the lemma holds.
    If $x\oplus (x-1)<x$ then we can take $$x\oplus (x-1)\oplus (x-1)=\bigoplus_{i=1}^m (\epsilon_{x\oplus (x-1),i}\oplus \epsilon_{(x-1),i}) \left\lfloor \frac{n\oplus(n-1)}{2^{i}} \right\rfloor.$$
    If $x\oplus (x-1)\ge x$ then by Lemma~\ref{lemma:nimsumconsecutive} $x\oplus (x-1)$ is a power of two and as $x\le \frac{n\oplus (n-1)}{2}$ then $x$ and $x-1$ have a shorter binary expression than $n\oplus (n-1)$ and as the Nim--sum is applied bit by bit we have $x\oplus (x-1)$ has a shorter binary expression than $n\oplus (n-1)$. Thus, we can find a $k\ge 1$ such as $$\left\lfloor \frac{n\oplus(n-1)}{2^{k}} \right\rfloor=2^{\log_2 ((n\oplus (n-1))+1)-k}-1=x\oplus (x-1)$$ then we just have to take $\epsilon_{x,k}=1-\epsilon_{x-1,k}$ and $\epsilon_{x,i}=\epsilon_{x-1,i}$ for $i\ne k$.

\end{proof}

\begin{lemma}
\label{lemma:inequalitytasty}
    If $f$ satisfies the tasty condition, then $$f(n)\oplus f(n-1)\le\frac{n\oplus (n-1)}{2}.$$
\end{lemma}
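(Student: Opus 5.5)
Let $n\ge 1$. By Lemma~\ref{lemma:nimsumconsecutive}, $n\oplus(n-1)=2^{t}-1$ for some $t\ge 1$: a block of $t$ ones. Equivalently, $t-1$ is the position of the lowest set bit of $n$. The plan is to show that $f(n)$ and $f(n-1)$ agree in every bit of position $t-1$ or higher. Then $f(n)\oplus f(n-1)<2^{t-1}$, that is $f(n)\oplus f(n-1)\le 2^{t-1}-1=\left\lfloor\frac{2^t-1}{2}\right\rfloor$. Since the left side is an integer, this gives $f(n)\oplus f(n-1)\le \frac{n\oplus(n-1)}{2}$.

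The first step is to check that $n$ and $n-1$ satisfy the hypothesis of the tasty condition with $i=t$. Because $n$ and $n-1$ differ exactly in the lowest $t$ bits, they agree in all bits of position $t$ and above. Hence
\[\left\lfloor \frac{n}{2^{t}}\right\rfloor=\left\lfloor \frac{n-1}{2^{t}}\right\rfloor.\]
This is exactly the hypothesis of the tasty condition (Definition~\ref{def:conditionCB}) with $z=n$, $z'=n-1$ and the positive integer $i=t$.

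The second step applies the tasty condition, which yields
\[\left\lfloor \frac{f(n)}{2^{t-1}}\right\rfloor=\left\lfloor \frac{f(n-1)}{2^{t-1}}\right\rfloor.\]
So $f(n)$ and $f(n-1)$ coincide in all bits of position $t-1$ and above. Their Nim--sum therefore has only bits below position $t-1$, so it is at most $2^{t-1}-1$, which finishes the argument.

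There is no real obstacle beyond the index bookkeeping. The point to get right is that the shift by one in the tasty condition (from $2^i$ to $2^{i-1}$) is exactly what produces the factor $\tfrac12$ in the statement. One also needs $t\ge 1$ so that $i$ is positive, and this holds since $n\oplus(n-1)\ge 1$. For the multivariate version of the tasty condition, the same argument applies coordinatewise with the other arguments fixed.
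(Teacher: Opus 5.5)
Your proof is correct and follows the paper's argument. Both write $n\oplus(n-1)=2^t-1$ via Lemma~\ref{lemma:nimsumconsecutive}, observe that $\lfloor n/2^t\rfloor=\lfloor (n-1)/2^t\rfloor$, apply the tasty condition with $i=t$ to get $f(n)\oplus f(n-1)<2^{t-1}$, and conclude using $2^{t-1}-1\le (2^t-1)/2$.
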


\begin{proof}
    If $n\oplus (n-1)<2^k$ for some $k$ then $\left\lfloor \frac{n}{2^{k}} \right\rfloor=\left\lfloor \frac{n-1}{2^{k}} \right\rfloor$ and then by tasty condition $\left\lfloor \frac{f(n)}{2^{k-1}} \right\rfloor=\left\lfloor \frac{f(n-1)}{2^{k-1}} \right\rfloor$ and $f(n)\oplus f(n-1)< 2^{k-1}$. 
    As by Lemma~\ref{lemma:nimsumconsecutive}, $n\oplus (n-1)+1$ is a power of $2$, there exists $k$ such that 
    $n \oplus (n-1) + 1= 2^k$ and $f(n) \oplus f(n-1) < 2^{k-1}.$
    Thus, $f(n)\oplus f(n-1)\le\frac{n\oplus (n-1)}{2}$.
\end{proof}

\begin{theorem}
    $f$ satisfies the tasty condition if and only if, for all nonnegative integer $n$, there exist nonnegative integers $m,k_1,\ldots,k_m,\ell_1,\ldots,\ell_m,c_1,\ldots,c_m$ such as $f(n')=F_n(n')$ for all $n'\le n$ and with $$F_n(z)=\bigoplus_{i=1}^m \max\left(\min\left(\left\lfloor \frac{z}{2^{k_i+1}} \right\rfloor,\ell_i+1\right),\ell_i\right)\oplus c_i.$$
\end{theorem}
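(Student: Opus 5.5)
The plan is to prove the two directions separately. The common tool is an analysis of the single building block
$$T_{k,\ell,c}(z)=\max\left(\min\left(\left\lfloor \frac{z}{2^{k+1}} \right\rfloor,\ell+1\right),\ell\right)\oplus c .$$
Put $a=\lfloor z/2^{k+1}\rfloor$. Then $\max(\min(a,\ell+1),\ell)$ equals $\ell$ when $a\le \ell$ and equals $\ell+1$ when $a\ge \ell+1$. So $T_{k,\ell,c}$ is a one-step function. It takes the value $\ell\oplus c$ for $z<(\ell+1)2^{k+1}$ and the value $(\ell+1)\oplus c$ for $z\ge(\ell+1)2^{k+1}$.

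\emph{Sufficiency.} First I show that every $F_n$ satisfies the tasty condition.
\begin{itemize}
\item $z\mapsto\lfloor z/2^{k+1}\rfloor$ is tasty by Theorem~\ref{theorem:floortasty}, applied with $2^{k}$ in place of $k$ so that the denominator is $2\cdot 2^k=2^{k+1}$.
\item The constants $\ell,\ell+1,c$ are tasty by Theorem~\ref{theorem:csttasty}.
\item Hence each $T_{k,\ell,c}$ is tasty by Theorems~\ref{theorem:mintasty}, \ref{theorem:maxtasty} and \ref{theorem:nimsumtasty}.
\item The Nim--sum of finitely many such blocks is tasty by repeated use of Theorem~\ref{theorem:nimsumtasty}.
\end{itemize}
The tasty condition for a pair $z,z'$ involves only $f(z)$ and $f(z')$. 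Given such a pair, take $n=\max(z,z')$. Since $f=F_n$ on $\{0,\ldots,n\}$ and $F_n$ is tasty, the required equality holds for $f$.

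\emph{Necessity.} I build $F_n$ by induction on $n$, each time adding blocks with $c_i=\ell_i$. Such a block is $0$ before its step point $(\ell_i+1)2^{k_i+1}$ and equals $\ell_i\oplus(\ell_i+1)$ from that point on. Adding it therefore does not change any value before the step point.
\begin{itemize}
\item Base case $n=0$: use the single block $T_{0,0,f(0)}$, which equals $f(0)$ at $z=0$.
\item Inductive step: let $F_{n-1}$ be given. Set $d=f(n)\oplus f(n-1)$, and write $n\oplus(n-1)=2^t-1$, which is possible by Lemma~\ref{lemma:nimsumconsecutive}. Then $2^{t-1}$ divides $n$, and $n/2^{t-1}$ is odd.
\item By Lemma~\ref{lemma:inequalitytasty}, $d\le (n\oplus(n-1))/2$.
\item By Lemma~\ref{lemma:xinoplus}, $d$ is a Nim--sum of some of the numbers $\lfloor (n\oplus(n-1))/2^{i}\rfloor=2^{t-i}-1$ with $1\le i\le t-1$. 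Terms with larger $i$ are $0$ and can be dropped.
\item For each $i$ used, append the block with $k=i-1$, $\ell=n/2^{i}-1$ and $c=\ell$. Its step point is $(\ell+1)2^{i}=n$. The number $\ell+1=n/2^i$ has exactly $t-1-i$ trailing zeros, so $\ell\oplus(\ell+1)=2^{t-i}-1$. Thus the block is $0$ on $[0,n-1]$ and equals $2^{t-i}-1$ at $n$.
\end{itemize}
The new $F_n$ agrees with $F_{n-1}$, and hence with $f$, on $[0,n-1]$. At $n$, the old blocks all have step points at most $n-1$, so they are constant after stepping, and $F_{n-1}(n)=F_{n-1}(n-1)=f(n-1)$. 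Therefore $F_n(n)=f(n-1)\oplus d=f(n)$.

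The main obstacle is this inductive step. Its correctness rests on matching the jump $\ell\oplus(\ell+1)$ of a block placed exactly at $n$ with the pieces $2^{t-i}-1$ produced by Lemma~\ref{lemma:xinoplus}. This matching is where the $2$-adic valuation of $n$, obtained via Lemma~\ref{lemma:nimsumconsecutive}, must be tracked carefully. A block needs $i\ge1$ to be well defined, and this is exactly why Lemma~\ref{lemma:inequalitytasty}, with its halving, is needed.
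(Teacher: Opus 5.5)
\textbf{Gap in the necessity direction.} Your sufficiency half is fine; taking $n=\max(z,z')$ is a cleaner way to say the paper's ``take $n$ large enough''. The inductive step of the necessity half, however, fails.

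You assert that every block already in $F_{n-1}$ has its step point at most $n-1$, so that $F_{n-1}(n)=F_{n-1}(n-1)$. This is false for your base block. By your own description of $T_{k,\ell,c}$, the block $T_{0,0,f(0)}$ steps at $(0+1)2^{1}=2$. It equals $f(0)$ on $\{0,1\}$ and $f(0)\oplus 1$ for $z\ge 2$.

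Here is how the construction goes wrong:
\begin{itemize}
\item At $n=1$ nothing is added, since $f(1)=f(0)$. So $F_1=T_{0,0,f(0)}$, and $F_1(2)=f(0)\oplus 1\neq f(1)$.
\item At $n=2$ your recipe adds blocks contributing $d=f(2)\oplus f(1)$ at $z=2$. This yields $F_2(2)=f(2)\oplus 1$.
\item Continuing, your $F_n$ satisfies $F_n(z)=f(z)\oplus 1$ for all $2\le z\le n$.
\end{itemize}
So the construction as written fails for every tasty $f$. The correction $d=f(n)\oplus f(n-1)$ ignores any jump of $F_{n-1}$ itself at $n$.

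\textbf{Repair 1 (the paper's).} Take as correction the actual discrepancy $d=F_{n-1}(n)\oplus f(n)$.
\begin{itemize}
\item By your sufficiency argument, $F_{n-1}$ is tasty. Hence $h=F_{n-1}\oplus f$ is tasty by Theorem~\ref{theorem:nimsumtasty}.
\item By the induction hypothesis, $h(n-1)=0$.
\item Lemma~\ref{lemma:inequalitytasty} applied to $h$ gives $d=h(n)\oplus h(n-1)\le (n\oplus(n-1))/2$.
\end{itemize}
Your matching of the jumps $\ell\oplus(\ell+1)=2^{t-i}-1$ of blocks stepping exactly at $n$ against the pieces from Lemma~\ref{lemma:xinoplus} then finishes the step. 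No bookkeeping of earlier step points is needed.

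\textbf{Repair 2 (keeps your $d$).} The parameters may depend on $n$, so fix a target $N$ and start from the base block $T_{0,L,L\oplus f(0)}$ with $2(L+1)>N$. This block equals $f(0)$ on $[0,N]$. At each stage $n\le N$, every earlier block then steps either at some point $\le n-1$ or beyond $N$. Hence $F_{n-1}(n)=F_{n-1}(n-1)$ really holds, and your argument goes through verbatim up to $N$.
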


\begin{proof}
    If for all $n$, $f(n')=F_n(n')$ for all $n'\le n$, by Theorem~\ref{theorem:mintasty}, Theorem~\ref{theorem:maxtasty}, Theorem~\ref{theorem:nimsumtasty}, Theorem~\ref{theorem:csttasty} and Theorem~\ref{theorem:floortasty}, we have that all $F_n$ satisfies the tasty condition then by taking $n$ large enough we have that $f$ satisfies the tasty condition.

    If $f$ satisfies the tasty condition, we prove by induction on $n$.
    For $n=0$, we can take $m=1$, $\ell_1=0$, $k_1=0$ and $c_1=f(0)$ giving us that $F_0(0)=f(0).$

    We suppose that we have $F_{n-1}$ such as $F_{n-1}(n')=f(n')$ for all $n'\le n-1$. 

    By Theorem~\ref{theorem:mintasty}, Theorem~\ref{theorem:maxtasty}, Theorem~\ref{theorem:nimsumtasty}, Theorem~\ref{theorem:csttasty} and Theorem~\ref{theorem:floortasty}, $F_{n-1}$ and $F_{n-1} \oplus f$ satisfy the tasty condition.
    We take $F_{n-1}(n)\oplus f(n)$, as $F_{n-1} \oplus f$ satisfy the tasty condition and $F_{n-1}(n-1)\oplus f(n-1)=0$, by Lemma~\ref{lemma:inequalitytasty} $$F_{n-1}(n)\oplus f(n)\le \frac{n\oplus (n-1)}{2}.$$
    By Lemma~\ref{lemma:xinoplus}, we can find a positive integer $m'$ and $\epsilon_{F_{n-1}(n)\oplus f(n),1},\ldots,\epsilon_{F_{n-1}(n)\oplus f(n),m'} \in \{0,1\}$, such that $$F_{n-1}(n)\oplus f(n)=\bigoplus_{i=1}^{m'} \epsilon_{F_{n-1}(n)\oplus f(n),i} \left\lfloor \frac{n\oplus(n-1)}{2^{i}} \right\rfloor.$$

    By adding to the Nim--sum of $F_{n-1}$ some terms for each $\epsilon_{F_{n-1}(n)\oplus f(n),i}=1$,  with $k_j=i-1$, $\ell_{j}=\left\lfloor \frac{n-1}{2^{i}} \right\rfloor$ and $c_j=\ell_j$, 
    we obtain $$F_n(z)=F_{n-1}(z)\bigoplus _{i=1}^{m'} \epsilon_{F_{n-1}(n)\oplus f(n),i} \left(\max\left(\min\left(\left\lfloor \frac{z}{2^{i}} \right\rfloor,\left\lfloor \frac{n-1}{2^{i}} \right\rfloor+1\right),\left\lfloor \frac{n-1}{2^{i}} \right\rfloor\right)\oplus \left\lfloor \frac{n-1}{2^{i}} \right\rfloor\right), $$
    since if $z<n$, $$F_n(z)=F_{n-1}(z)\bigoplus _{i=1}^{m'} \epsilon_{F_{n-1}(n)\oplus f(n),i} \left(\left\lfloor \frac{n-1}{2^{i}} \right\rfloor\oplus \left\lfloor \frac{n-1}{2^{i}} \right\rfloor\right)=F_{n-1}(z)=f(z)$$ and 
    \begin{eqnarray*}
    F_n(n)&=&F_{n-1}(n)\bigoplus _{i=1}^{m'} \epsilon_{F_{n-1}(n)\oplus f(n),i} \left( \left\lfloor \frac{n}{2^{i}} \right\rfloor\oplus \left\lfloor \frac{n-1}{2^{i}} \right\rfloor\right) \\&=&F_{n-1}(n)\bigoplus_{i=1}^{m'} \epsilon_{F_{n-1}(n)\oplus f(n),i} \left\lfloor \frac{n\oplus(n-1)}{2^{i}} \right\rfloor \\&=&f(n).
    \end{eqnarray*}
\end{proof}

\section{Main results}
\label{sec:main}


In this section, we prove that the tasty condition is a sufficient condition for having  Changing Height Chocolate Game is tasty, and by extension, tasty condition is also a sufficient condition for having Multiple Changing Head Nim is tasty.

We also prove that the tasty condition is a necessary condition if the function are monotonous according to each dimension for having all Multiple Changing Head Nim using these functions tasty.

We first prove the easier case,  Changing Height Chocolate Game, as the proof is almost the same one as for Multiple Changing Head Nim but making the notation easier to understand and all equations shorter. If the reader understand the first two subsection, the other subsections are just the same proof but for Multiple Changing Head Nim.

\subsection{Sufficient Condition}

In this subsection, we prove that the tasty condition is a sufficient condition to have $g(\CHCG(f,n,m))=\min(m,f(n))\oplus n$.
Let $f$ be a function that satisfies the tasty condition.

\begin{lemma}
\label{lemma:yf(x)f(x-1)}
    For all $x,y \in \mathbb{Z}_{\ge 0}$, 
    $$x<y \implies x\oplus f(y) \oplus f(y-1) <y.$$
\end{lemma}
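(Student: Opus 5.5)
We need: for $x<y$ (so $y\ge 1$), $x\oplus f(y)\oplus f(y-1)<y$.

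The plan is to combine Lemma~\ref{lemma:nimsumconsecutive} with Lemma~\ref{lemma:inequalitytasty}. By Lemma~\ref{lemma:nimsumconsecutive}, $y\oplus(y-1)=2^k-1$ for some $k\ge 1$. This $k$ is exactly the position one above the lowest set bit of $y$: the binary expansion of $y$ ends with a $1$ in position $k-1$ followed by $k-1$ zeros. By Lemma~\ref{lemma:inequalitytasty}, $d:=f(y)\oplus f(y-1)\le \frac{2^k-1}{2}$, so $d<2^{k-1}$. In other words, $d$ only has bits in positions $0,\ldots,k-2$.

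Now compare $x\oplus d$ with $y$ using the highest bit where $x$ and $y$ differ. Since $x<y$, let $j$ be the most significant bit position where $x$ and $y$ differ. Then $y$ has a $1$ at position $j$, $x$ has a $0$ there, and they agree above $j$. Because $y$ has no $1$ below position $k-1$, we get $j\ge k-1$.

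Nim-summing $x$ with $d$ changes only bits in positions below $k-1$, hence only bits strictly below $j$. So $x\oplus d$ still agrees with $y$ above position $j$ and still has a $0$ at position $j$ where $y$ has a $1$. Therefore $x\oplus d<y$.

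The only step needing care is the bit-position bookkeeping: that $j\ge k-1$ (the lowest set bit of $y$ is at $k-1$) and that $d<2^{k-1}$ is a strict bound. The strict bound follows because $\frac{2^k-1}{2}<2^{k-1}$ and $d$ is an integer. When $k=1$ (that is, $y$ odd), this gives $d=0$, so the claim reduces to $x<y$, which is consistent.
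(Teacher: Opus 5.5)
Your proof is correct, but it is organized differently from the paper's. The paper does not use Lemma~\ref{lemma:nimsumconsecutive} or Lemma~\ref{lemma:inequalitytasty} here; instead it argues by contradiction.

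\begin{itemize}
\item It lets $i$ be the least exponent with $\lfloor f(y)/2^i\rfloor=\lfloor f(y-1)/2^i\rfloor$, i.e.\ $i$ is the bit-length of $d=f(y)\oplus f(y-1)$. Then $x$ and $x\oplus d$ lie in the same block of $2^i$ consecutive integers.
\item If $x\oplus d\ge y$, the chain $x\le y-1<y\le x\oplus d$ forces $\lfloor y/2^i\rfloor=\lfloor (y-1)/2^i\rfloor$.
\item The tasty condition then makes $f(y)$ and $f(y-1)$ agree already at level $i-1$, contradicting the minimality of $i$.
\end{itemize}

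You instead fix the level from $y$'s side ($k-1$ is the lowest set bit of $y$), get $d<2^{k-1}$ from the tasty condition at that level, and finish with a direct highest-differing-bit comparison.

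Both arguments hinge on the tasty condition applied to the consecutive pair $y-1,y$. Yours is more transparent: it says exactly which bits of $x$ can be flipped and why that cannot carry $x$ past $y$. Since your level $k$ is automatically at least $1$, the tasty condition (stated for positive $i$) always applies, with no degenerate case to check. The paper's version buys self-containment, needing nothing beyond the tasty condition itself.

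One small streamlining of your write-up: the detour through $d\le\frac{2^k-1}{2}$ is unnecessary. Since $\lfloor y/2^k\rfloor=\lfloor (y-1)/2^k\rfloor$, the tasty condition with $i=k$ gives $d<2^{k-1}$ at once. This is exactly how Lemma~\ref{lemma:inequalitytasty} is proved before its bound is weakened to the stated form.
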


\begin{proof}
     Let $x,y \in \mathbb{Z}_{\ge 0}$ and $y<x$.
    We take the minimal $i$ such that $$ \left\lfloor \dfrac{f(y-1)}{2^{i}} \right\rfloor = \left\lfloor \dfrac{f(y)}{2^{i}} \right\rfloor,$$ then $$f(y)\oplus f(y-1)<2^{i}$$ and $$\left\lfloor \dfrac{x}{2^{i}} \right\rfloor = \left\lfloor \dfrac{x\oplus f(y) \oplus f(y-1)}{2^{i}} \right\rfloor.$$
    If $x\oplus f(y) \oplus f(y-1)\ge y$, then we have $$x\oplus f(y) \oplus f(y-1)\ge y>y-1\ge x,$$ which implies $$\left\lfloor \dfrac{y}{2^{i}} \right\rfloor = \left\lfloor \dfrac{y-1}{2^{i}} \right\rfloor.$$ However, by tasty condition, we have $$\left\lfloor \dfrac{f(y-1)}{2^{i-1}} \right\rfloor = \left\lfloor \dfrac{f(y)}{2^{i-1}} \right\rfloor,$$ contradicting the minimality of $i$.
    Thus, $x\oplus f(y) \oplus f(y-1) < y$.
\end{proof}

\begin{lemma}
\label{lemma:setequality}
    For any $y \in \mathbb{Z}_{\ge 0}$, we have $$\{f(y-1)\oplus x \mid 0\le x < y\}=\{f(y)\oplus x\mid0\le x < y\}.$$
\end{lemma}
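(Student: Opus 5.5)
Set $d = f(y)\oplus f(y-1)$. The map $x\mapsto x\oplus d$ is an involution on $\mathbb{Z}_{\ge 0}$, and $f(y)\oplus x = f(y-1)\oplus(x\oplus d)$. The claimed equality is therefore equivalent to saying that this involution maps the set $\{0,\ldots,y-1\}$ into itself. Since it is an involution, it is then a bijection of that set onto itself, and the two sets coincide.

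The key step is exactly Lemma~\ref{lemma:yf(x)f(x-1)}. Its statement (the proof's opening line swaps the inequality, but the statement is what we use) says that for $0\le x<y$ we have $x\oplus f(y)\oplus f(y-1)<y$. This is precisely the containment $x\oplus d\in\{0,\ldots,y-1\}$. Consequently every element $f(y)\oplus x$ of the right-hand set, with $x<y$, equals $f(y-1)\oplus x'$ for $x'=x\oplus d<y$, so it lies in the left-hand set.

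For the reverse inclusion, take $f(y-1)\oplus x$ with $x<y$. Then $x'=x\oplus d<y$ by the same lemma, since the lemma's condition only involves $x<y$ and $d$ is symmetric in $f(y)$ and $f(y-1)$. Also $f(y)\oplus x' = f(y-1)\oplus x$, so this element lies in the right-hand set. Alternatively, both sets have exactly $y$ elements because XOR with a fixed number is injective, so one inclusion already suffices.

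The edge case $y=0$ is trivial, since both sets are empty (and $f(-1)$ never needs to be evaluated). The only real content is Lemma~\ref{lemma:yf(x)f(x-1)}, which is already established. The only care needed is to note that its hypothesis is $x<y$ and its conclusion is symmetric in $f(y)$ and $f(y-1)$; I expect no further obstacle.
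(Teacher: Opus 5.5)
Your proposal is correct and takes essentially the same route as the paper. Both inclusions come from applying Lemma~\ref{lemma:yf(x)f(x-1)} to show that $x\mapsto x\oplus f(y)\oplus f(y-1)$ maps $\{0,\ldots,y-1\}$ into itself; your explicit symmetric treatment of the reverse inclusion (or the cardinality shortcut) also avoids the typo in the paper's second inclusion, where $z=z'\oplus f(y-1)$ should read $z=z'\oplus f(y)$.
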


\begin{proof}
    For every $z \in \{f(y-1)\oplus x\mid0\le x < y\},$ there exists $z'$ such that $0\le z'<y$ and $z=z'\oplus f(y-1)$. Then, by Lemma~\ref{lemma:yf(x)f(x-1)}, $0\le z'\oplus f(y-1)\oplus f(y)<y$  and $z = f(y) \oplus (z'\oplus f(y-1)\oplus f(y))\in \{f(y)\oplus x\mid0\le x < y\}$.

    For every $z \in \{f(y)\oplus x\mid0\le x < y\},$ there exists $z'$ such that $0\le z'<y$ and $z=z'\oplus f(y-1)$. Then, by Lemma~\ref{lemma:yf(x)f(x-1)}, $0\le z'\oplus f(y-1)\oplus f(y)<y$  and $z = f(y-1) \oplus (z'\oplus f(y-1)\oplus f(y)) \in \{f(y-1)\oplus x\mid0\le x < y\}$. 
    
    Therefore, $$\{f(y-1)\oplus x \mid 0\le x < y\}=\{f(y)\oplus x\mid0\le x < y\}$$
    holds.
\end{proof}

\begin{lemma}
\label{lemma:equality_f(x)f(y)}
    For any $y \in \mathbb{Z}_{\ge 0}$, we have
    $$\{f(x)\oplus x\mid0\le x < y\}=\{f(y)\oplus x\mid0\le x < y\}.$$
\end{lemma}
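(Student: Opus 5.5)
Let $A_y=\{f(x)\oplus x\mid 0\le x<y\}$ and $B_y=\{f(y)\oplus x\mid 0\le x<y\}$. I will prove $A_y=B_y$ by induction on $y$, using Lemma~\ref{lemma:setequality} as the only nontrivial ingredient.

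For the base case $y=0$, both sets are empty, so there is nothing to prove.

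For the inductive step, assume $A_{y-1}=B_{y-1}$ for some $y\ge 1$. Splitting off the last element gives
\begin{align*}
A_y &= A_{y-1}\cup\{f(y-1)\oplus(y-1)\} \\
&= \{f(y-1)\oplus x\mid 0\le x<y-1\}\cup\{f(y-1)\oplus(y-1)\} \\
&= \{f(y-1)\oplus x\mid 0\le x<y\}.
\end{align*}
Here the middle equality is the induction hypothesis, with $B_{y-1}$ written out explicitly. Applying Lemma~\ref{lemma:setequality} to the last set replaces $f(y-1)$ by $f(y)$, which gives exactly $B_y$. This completes the induction.

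The argument is routine once Lemma~\ref{lemma:setequality} is available, and the only step needing care is the bookkeeping in the inductive step. The induction hypothesis concerns $B_{y-1}$, which is built with $f(y-1)$ and the range $x<y-1$. Adjoining the new element $f(y-1)\oplus(y-1)$ to it produces precisely the set with $f(y-1)$ over the range $x<y$. The indices match, and no extra condition on $f$ beyond the tasty condition, already used inside Lemma~\ref{lemma:setequality}, is needed.
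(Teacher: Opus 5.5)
Your proof is correct and is essentially the paper's own argument. You induct on $y$ with the empty base case, use the induction hypothesis after splitting off $f(y-1)\oplus(y-1)$ to rewrite the left-hand set as $\{f(y-1)\oplus x\mid 0\le x<y\}$, and then invoke Lemma~\ref{lemma:setequality} to replace $f(y-1)$ by $f(y)$.
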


\begin{proof}
    We prove this by induction on $y$.
    It is trivial that when $y=0$, the both sets are empty sets and the equation holds. From the induction hypothesis, we have 
    \begin{align*}
\{f(x)\oplus x\mid0\le x < y\}&=\{f(y-1)\oplus x\mid0\le x < y-1\} \cup \{f(y-1)\oplus(y-1)\}  \\
    &=\{f(y-1)\oplus x\mid0\le x < y\}.
    \end{align*}
    By Lemma~\ref{lemma:setequality}, $$\{f(y-1)\oplus x\mid0\le x < y\}=\{f(y)\oplus x\mid0\le x < y\}.$$
    Therefore, for any $y \in \mathbb{Z}_{\ge 0}$, $$\{f(x)\oplus x\mid0\le x < y\}=\{f(y)\oplus x\mid0\le x < y\}$$ holds. 
    
\end{proof}

\begin{theorem}
\label{th:sufficient}
   For any $n,m \in \mathbb{Z}_{\ge 0}$, we have $$g(\CHCG(f,n,m))=\min(m,f(n))\oplus n.$$
\end{theorem}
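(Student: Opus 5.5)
We argue by strong induction on $n+m$ (equivalently, on the pair $(n,m)$ ordered by the option relation), with claim $g(\CHCG(f,n,m))=\min(m,f(n))\oplus n$. Write $h=\min(m,f(n))$. As in the Remark for $\DCG$, the rule that a square of the rightmost column must be removed makes the options of $\CHCG(f,n,m)$ exactly $\CHCG(f,n',m)$ for $0\le n'<n$ (vertical cuts) and $\CHCG(f,n,m')$ for $0\le m'<h$ (horizontal cuts below the current height of the last column). By the induction hypothesis their values are $\min(m,f(n'))\oplus n'$ and $m'\oplus n$. We must show that the set of these values omits $h\oplus n$ and contains every $v<h\oplus n$.

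\emph{Omission.} A horizontal option gives $m'\oplus n\neq h\oplus n$ since $m'\neq h$. For a vertical option, write $f_m(z)=\min(m,f(z))$. The constant $m$ satisfies the tasty condition by Theorem~\ref{theorem:csttasty}, so $f_m$ does too by Theorem~\ref{theorem:mintasty}. The lemmas preceding the statement were proved for an arbitrary $f$ with the tasty condition, so we may apply them to $f_m$. Lemma~\ref{lemma:equality_f(x)f(y)} with $y=n$ then gives
\[
\{f_m(n')\oplus n' \mid 0\le n'<n\}=\{f_m(n)\oplus x\mid 0\le x<n\}.
\]
Hence the vertical values are exactly the numbers $h\oplus x$ with $x<n$, and none of them equals $h\oplus n$.

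\emph{Reachability.} By the same identity, the option values are
\[
\{h\oplus x\mid x<n\}\cup\{m'\oplus n\mid m'<h\}.
\]
Take $v<h\oplus n$ and let $d$ be the highest bit where $v$ and $h\oplus n$ differ, so that $h\oplus n$ has a $1$ there. That bit is $1$ in exactly one of $h$ and $n$. If it is in $n$, put $x=v\oplus h$; then $x$ agrees with $n$ above bit $d$ and has a $0$ at bit $d$, so $x<n$ and $h\oplus x=v$. If it is in $h$, put $m'=v\oplus n<h$; then $m'\oplus n=v$. This is the standard Nim argument. Hence the mex is $h\oplus n$.

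The only real obstacle is the step that rewrites the vertical options, i.e.\ justifying that Lemma~\ref{lemma:equality_f(x)f(y)} applies to the truncated function $f_m$ rather than to $f$ itself. This is handled by observing that the lemmas only use the tasty condition, which $f_m$ inherits by Theorems~\ref{theorem:csttasty} and~\ref{theorem:mintasty}. Once that is in place, the rest is the usual two-pile Nim mex computation. One should also check that the base case $n=m=0$ (no options, value $0$) and the edge case $h=0$ (no horizontal options) are covered by the same argument, and they are.
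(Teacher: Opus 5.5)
Your proof is correct and follows essentially the paper's route. You apply Lemma~\ref{lemma:equality_f(x)f(y)} to the truncation $\min(m,f)$, which is tasty by Theorems~\ref{theorem:csttasty} and~\ref{theorem:mintasty}, so that the option values become those of $\SCG(n,\min(m,f(n)))$, and you finish with the two-pile Nim mex. Your uniform induction on $n+m$ is in fact slightly more careful than the paper's case split: its case $m\le f(n)$ asserts that $\CHCG(f,n,m)$ is isomorphic to $\SCG(n,m)$, which fails when some earlier column is shorter than $m+1$ (e.g.\ for increasing $f$), although the value identity there still holds by exactly your argument.
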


\begin{proof}
    We prove this by induction on $n$.

    If $m\le f(n)$, then $\CHCG(f,n,m)$ is isomorphic to $ \SCG(n,m)$ and $$g(\CHCG(f,n,m))=g(\SCG(n,m))=m\oplus n= \min(m,f(n))\oplus n. $$ 

    If $m > f(n)$, 
    \begin{align*}
        g(\CHCG(f,n,m))&= {\rm mex} (\{g(\CHCG(f,n',m)), g(\CHCG(f,n,m'))\mid 0\le n'< n, 0 \le m'<f(n) \}) \\
        &={\rm mex} (\{\min(f(n'),m)\oplus n', m' \oplus n\mid 0\le n'< n, 0 \le m'<f(n) \}).
    \end{align*}
    However, by Lemmas~\ref{theorem:mintasty} and ~\ref{lemma:equality_f(x)f(y)}, 
    \begin{align*}
        &\{\min(f(n'),m)\oplus n', m' \oplus n\mid 0\le n'< n, 0 \le m'<f(n) \} \\
        =&\{\min(f(n),m)\oplus n', m' \oplus n\mid 0\le n'< n, 0 \le m'<f(n) \}
    \end{align*} and then 
    \begin{align*}
        g(\CHCG(f,n,m))&={\rm mex} (\{f(n)\oplus n', m' \oplus n\mid 0\le n'< n, 0 \le m'<f(n) \}) \\
        &=g(\SCG(n,f(n))=f(n)\oplus n.
    \end{align*}
\end{proof}

\subsection{Necessary Condition}
Let $f$ be an increasing chocolate function or a decreasing chocolate function. 
In this subsection, we prove that the tasty condition is a necessary condition to have $g(\CHCG(f,n,m))=\min(m,f(n))\oplus n$.

Note that since $f$ is an increasing chocolate function or a decreasing chocolate function, $\CHCG(f,n,m)$ is $\CB(f, n, m)$ or $\DCG(f, n, m).$





\begin{lemma}
\label{lemma:inneguality}
    For any $z,z' \in \mathbb{Z}_{\ge 0}$, if $z\ne z'$ then $z\oplus f(z) \ne z'\oplus f(z')$.
\end{lemma}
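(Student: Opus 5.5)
The plan is a direct contradiction argument using only the standing hypothesis of this subsection, namely that $g(\CHCG(f,n,m))=\min(m,f(n))\oplus n$ holds for all $n,m\in\mathbb{Z}_{\ge 0}$, together with the basic property of ${\rm mex}$: a position never shares its Sprague--Grundy value with any of its options.

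Suppose, for contradiction, that $z\ne z'$ and $z\oplus f(z)=z'\oplus f(z')$. Without loss of generality $z<z'$. I will choose the height parameter $m=\max(f(z),f(z'))$, or any larger integer, so that both minima disappear. With this choice the hypothesis gives
\begin{align*}
g(\CHCG(f,z',m)) &= \min(m,f(z'))\oplus z' = f(z')\oplus z', \\
g(\CHCG(f,z,m)) &= \min(m,f(z))\oplus z = f(z)\oplus z .
\end{align*}

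Next I will check that $\CHCG(f,z,m)$ is an option of $\CHCG(f,z',m)$. Cutting off the columns $z+1,\ldots,z'$ is a vertical cut that keeps the bitter block. It removes at least one square of the rightmost column $z'$, so it respects the extra rule on the rightmost column. The resulting bar has columns $0,\ldots,z$ with heights $\min(f(i),m)+1$, which is exactly $\CHCG(f,z,m)$. This is the same as the option $\DCG(f,x',y)$ with $x'<x$ listed in the remark after the definition of the Decreasing Chocolate Game, and the same description applies to $\CB(f,\cdot,\cdot)$.

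Hence the position $\CHCG(f,z',m)$ has an option whose Sprague--Grundy value equals its own value $z'\oplus f(z')$. This is impossible, since the value is the ${\rm mex}$ of the option values and so is never one of them. Therefore $z\oplus f(z)\ne z'\oplus f(z')$. I do not expect a real obstacle. The only care needed is choosing $m$ large enough that both minima equal the corresponding $f$-values, and confirming that the width-reducing move is legal in both the increasing and decreasing settings. Monotonicity of $f$ is not actually used here; it presumably becomes important in the later lemmas of this subsection.
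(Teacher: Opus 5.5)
Your proof is correct and is essentially the paper's argument. You take $m$ large enough that both minima reduce to the $f$-values, note that the width-reducing cut makes $\CHCG(f,z,m)$ an option of $\CHCG(f,z',m)$, and conclude the two Sprague--Grundy values must differ; your choice $m=\max(f(z),f(z'))$ works as well as the paper's strict inequality, and you are right that monotonicity plays no role here.
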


\begin{proof}
    Let $z,z'  \in \mathbb{Z}_{\ge 0}$ with $z\ne z'$. Suppose without loss of generality that $z>z'$.
    As for all $n,m \in \mathbb{Z}_{\ge 0}$,  $g(\CHCG(f,n,m))=\min(m,f(n))\oplus n$, if we take $m>\max(f(z),f(z'))$ then we have $g(\CHCG(f,z,m))=f(z)\oplus z$ and $g(\CHCG(f,z',m))=f(z')\oplus z'$. However, there is a move from $\CHCG(f,z,m)$ to $\CHCG(f,z',m).$ Then the Sprague--Grundy values must be different and $f(z)\oplus z\ne f(z')\oplus z'$.
\end{proof}

\begin{lemma}
 \label{lemma:divide_by_two}
    If for $x,k,m \in \mathbb{Z}_{\ge 0}$, there exists $y$ such that for all $x2^k\le z<(x+1)2^k$ we have $y2^k\le \min(m,f(z))<(y+1)2^k$, then 
    \begin{itemize}
        \item either for all $x2^k\le z'<(x+1)2^k$, $(2y)2^{k-1}\le \min(m,f(z'))<(2y+1)2^{k-1}$,
        \item or for all $x2^k\le z'<(x+1)2^k$, $(2y+1)2^{k-1}\le \min(m,f(z'))<(2y+2)2^{k-1}$.
    \end{itemize}
\end{lemma}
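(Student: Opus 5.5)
The plan is to turn the hypothesis of this subsection (that $g(\CHCG(f,n,m))=\min(m,f(n))\oplus n$ for all $n,m$) into a bijectivity statement on the block $B=\{z : x2^k\le z<(x+1)2^k\}$. I then show that bijectivity forces bit $k-1$ of $h(z)=\min(m,f(z))$ to be constant on $B$, which is exactly the dichotomy claimed. Throughout, fix $m$ and write $h(z)=\min(m,f(z))$. Since $f$ is monotone (increasing or decreasing), so is $h$. We may assume $k\ge 1$.

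\textbf{Step 1 (injectivity).} First I show $z\mapsto z\oplus h(z)$ is injective, by the same argument as Lemma~\ref{lemma:inneguality} but with $m$ fixed. For $z'<z$ there is a legal move from $\CHCG(f,z,m)$ to $\CHCG(f,z',m)$, since it removes the rightmost column. Hence their Sprague--Grundy values $z\oplus h(z)$ and $z'\oplus h(z')$ differ.

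\textbf{Step 2 (bijection on low bits).} For $z\in B$ we have $\lfloor z/2^k\rfloor=x$ and $\lfloor h(z)/2^k\rfloor=y$. So all values $z\oplus h(z)$, $z\in B$, share the same bits of weight $\ge 2^k$, namely those of $x\oplus y$. Injectivity on the $2^k$ elements of $B$ then says that the map $z\mapsto (z \bmod 2^k)\oplus(h(z)\bmod 2^k)$ is a bijection of $B$ onto $\{0,\ldots,2^k-1\}$. In particular, bit $k-1$ of $z\oplus h(z)$ equals $1$ for exactly $2^{k-1}$ values $z\in B$.

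\textbf{Step 3 (counting).} Let $a(z)$ be bit $k-1$ of $z$, so $a(z)=[z\ge x2^k+2^{k-1}]$, and let $b(z)$ be bit $k-1$ of $h(z)$. Since $h$ is monotone and stays in $[y2^k,(y+1)2^k)$, $b$ is a monotone $0/1$ step function on $B$. It has the form $b(z)=[z\ge x2^k+p]$ in the increasing case, or $b(z)=[z< x2^k+p]$ in the decreasing case, for some $0\le p\le 2^k$. The number of $z\in B$ with $a(z)\ne b(z)$ is then $|p-2^{k-1}|$ in the increasing case and $2^k-|p-2^{k-1}|$ in the decreasing case. By Step 2 this count must equal $2^{k-1}$, which forces $p\in\{0,2^k\}$. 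Hence $b$ is constant on $B$.

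\textbf{Conclusion.} If $b\equiv 0$ on $B$, every $h(z')$ lies in $[(2y)2^{k-1},(2y+1)2^{k-1})$. If $b\equiv 1$, every $h(z')$ lies in $[(2y+1)2^{k-1},(2y+2)2^{k-1})$. These are the two alternatives of the lemma.

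The only delicate step is the counting in Step 3: one has to check that monotonicity really makes $b$ a single step and handle both orientations. Once the bijection of Step 2 is in hand, the rest is routine.
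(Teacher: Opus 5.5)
Your proof is correct and follows essentially the paper's route. You get injectivity of $z\mapsto z\oplus\min(m,f(z))$ from the move to a narrower bar, then run a pigeonhole count on bit $k-1$ over the four quadrants of the block, with monotonicity forcing a single crossing point. Your exact mismatch counts $|p-2^{k-1}|$ and $2^k-|p-2^{k-1}|$ treat the increasing and decreasing cases uniformly, whereas the paper writes out only the decreasing case via a split at $z=(2x+1)2^{k-1}$. Your Step 1 also proves injectivity for each fixed $m$, which is what the lemma actually uses, while Lemma~\ref{lemma:inneguality} is stated only for large $m$.
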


\begin{figure}[!ht]
\centering
\resizebox{\textwidth}{!}{%
\begin{tikzpicture}
  \node at (0,-0.5) {$x2^k$};

  \node at (6,-0.5) {$(2x+1)2^{k-1}$};

  \node at (12,-0.5) {$(x+1)2^k$};

  \node at (-1,0) {$y2^k$};
  \node (node1) at (-1,6) {$(2y+1)2^{k-1}$};
  \node at (-1,12) {$(y+1)2^k$};

\draw  (0,0) rectangle (6,6);
\draw  (0,6) rectangle (6,12);
\draw  (6,6) rectangle (12,12);
\draw  (6,0) rectangle (12,6);
\node at (3,4) {$
    (2x\oplus 2y)2^{k-1}$};
\node at (3,3) {
    $\le z\oplus \min(m,f(z))$  
   };
\node at (3,2) {  
    $<(2x\oplus 2y+1)2^{k-1}$};
\node at (9,4) {
    $(2x\oplus 2y+1)2^{k-1} $};
\node at (9,3) {
    $\le z\oplus \min(m,f(z))$ };
\node at (9,2) {
    $<(2x\oplus 2y+2)2^{k-1}$};
\node at (3,10) {
   $ (2x\oplus 2y+1)2^{k-1} $
};
\node at (3,9) {
    $\le z\oplus \min(m,f(z))$  
};
\node at (3,8) {
    $<(2x\oplus 2y+2)2^{k-1}$
};
\node at (9,10) {
    $(2x\oplus 2y)2^{k-1}$ 
};
\node at (9,9) { 
    $\le z\oplus \min(m,f(z))$ 
};
\node at (9,8) {
    $<(2x\oplus 2y+1)2^{k-1}$
};

  \draw[red] (12,6) -- (0,6);

  \draw[green] (0,11) -- (2,11) -- (2,10.5) -- (3,10.5) -- (3,7.5) -- (8,7.5) --(8,7) -- (12,7) ;

  \draw[red,dashed] (0,9.5) -- (4,9.5) -- (4,8.5) -- (6.5,8.5) -- (6.5,6.5) -- (9,6.5) -- (9,5) -- (11,5) -- (11,4) -- (12,4);

  \draw[red,dashed] (0,7.4) -- (2,7.4) --(2,5.4) -- (4,5.4) -- (4,3.5) -- (8,3.5) -- (8,1.5) -- (10,1.5) -- (10,1) --(12,1);
\end{tikzpicture}
}%
\caption{Graphic representation of the proof of Lemma~\ref{lemma:divide_by_two}}
\label{fig:chocodiagonal}
\end{figure}

Lemma~\ref{lemma:divide_by_two} can be understood as if $\min(m,f(z))$ stays in a box of born $y2^k$ and $(y+1)2^k$ between $x2^k$ and $(x+1)2^k$ then $\min(m,f(z))$ never cross the line separating the upper half and the lower half.
Figure \ref{fig:chocodiagonal} shows this intuition. The red dotted lines represent shapes of chocolates which cannot be tasty, and the green line represents a shape of chocolate which can be tasty.

We can understand the proof as the upper left quarter and the lower right quarter take values in the same set which only contain $2^{k-1}$ values and as $f$ is monotonous we have to pass entirely by one of these two quarters. Thus, as all values of $\min(m,f(z))$ have to be different, when we have pass entirely by one quarter by pigeon hole principle, we cannot enter in the second one.

The rigorous proof is as follows:
\begin{proof}
    We take $x,k,m,y \in \mathbb{Z}_{\ge 0}$ as for all $x2^k\le z<(x+1)2^k,$ we have $y2^k\le \min(m,f(z))<(y+1)2^k$.

    We have two possibilities for $z$ such that $$x2^k\le z < (2x+1)2^{k-1} \text{ and } (2x+1)2^{k-1}\le z < (x+1)2^{k}.$$ We also have two possibilities for $\min(m,f(z))$ such that $$y2^{k} \le \min(m,f(z)) < (2y+1)2^{k-1} \text{ and } (2y+1)2^{k-1} \le \min(m,f(z)) < (y+1)2^{k}.$$ Among the possibilities for $z$ and $\min(m,f(z))$, we note that  for two cases we have the same condition on $z\oplus \min(m,f(z))$.


     If $x2^k\le z < (2x+1)2^{k-1}$ and $(2y+1)2^{k-1} \le \min(m,f(z)) < (y+1)2^{k}$, then as all $z$ have the same binary writing if we exclude the $k-1$ least significant bits and all $\min(m,f(z))$ have the same binary writing if we exclude the $k-1$ least significant bits,
     $$((2x\oplus 2y)+1)2^{k-1}\le z\oplus \min(m,f(z))<((2x\oplus 2y)+2)2^{k-1}.$$
Note that we used $x2^k = (2x)2^{k-1}$ and $(2x) \oplus (2y + 1) = (2x \oplus 2y)  + 1.$
    
    Also, if $(2x+1)2^{k-1}\le z < (x+1)2^{k}$ and $y2^{k} \le \min(m,f(z)) < (2y+1)2^{k-1}$, then as all $z$ have the same binary writing if we exclude the $k-1$ least significant bits and all $\min(m,f(z))$ have the same binary writing if we exclude the $k-1$ least significant bits,
    $$((2x\oplus 2y)+1)2^{k-1}\le z\oplus \min(m,f(z))<((2x\oplus 2y)+2)2^{k-1}.$$
    
    Without loss of generality, we assume that $f$ is decreasing. For the case $f$ is increasing, which is already shown in \cite{MN21}, we can prove this by using a similar argument.\\
    

     Suppose that $\min(m,f((2x+1)2^{k-1}))<(2y+1)2^{k-1}$, then as $f$ is decreasing, for all $(2x+1)2^{k-1}\le z<(x+1)2^{k}$, $y2^{k} \le \min(m,f(z)) < (2y+1)2^{k-1}$ and as mentioned above, $$((2x\oplus 2y)+1)2^{k-1}\le z\oplus \min(m,f(z))<((2x\oplus 2y)+2)2^{k-1}.$$ 

However, there are $2^{k-1}$ different values between $((2x\oplus 2y)+1)2^{k-1}$ and $((2x\oplus 2y)+2)2^{k-1}-1$ and there are also exactly $2^{k-1}$ different possible values for $z$. 

For all $z,z'$, $z\oplus \min(m,f(z)) \ne z'\oplus \min(m,f(z'))$ by Lemma~\ref{lemma:inneguality}. Then as we already have all these values between $(2y)2^{k-1}$ and $(2y+1)2^{k-1}-1$, there is no other $z$ such as $((2x\oplus 2y)+1)2^{k-1}\le z\oplus \min(m,f(z))<((2x\oplus 2y)+2)2^{k-1}$. 

Then there is no $z$ such as $x2^k\le z < (2x+1)2^{k-1}$ and $(2y+1)2^{k-1} \le \min(m,f(z)) < (y+1)2^{k}$ which means that $\min(m,f(x2^{k}))< (2y+1)2^{k-1}$. Then as $f$ is decreasing, for all $x2^k\le z'<(x+1)2^k$, $(2y)2^{k-1}\le \min(m,f(z'))<(2y+1)2^{k-1}.$ \\

    Suppose that $\min(m,f((2x+1)2^{k-1}))\ge(2y+1)2^{k-1}$, then as $f$ is decreasing, for all $x2^k\le z<(2x+1)2^{k-1}$, $(2y+1)2^{k-1} \le \min(m,f(z)) < (y+1)2^{k}$ and as mentioned above,  $$((2x\oplus 2y)+1)2^{k-1}\le z\oplus \min(m,f(z))<((2x\oplus 2y)+2)2^{k-1}.$$ 
    
    However, there are $2^{k-1}$ different values between $((2x\oplus 2y)+1)2^{k-1}$ and $((2x\oplus 2y)+2)2^{k-1}-1$ and there are also exactly $2^{k-1}$ different possible values for $z$. 
    
    As for all $z,z'$, $z\oplus \min(m,f(z)) \ne z'\oplus \min(m,f(z'))$ by Lemma~\ref{lemma:inneguality}. Then as we already have all these value between $(2y+1)2^{k-1}$ and $(2y+2)2^{k-1}-1$ there is no other $z$ such as $$((2x\oplus 2y)+1)2^{k-1}\le z\oplus \min(m,f(z))<((2x\oplus 2y)+2)2^{k-1}.$$
    
    Then there is no $z$ such as $(2x+1)2^{k-1}\le z < (x+1)2^{k}$ and $y2^{k} \le \min(m,f(z)) < (2y+1)2^{k-1}$ which means that $\min(m,f((x+1)2^{k}-1))\ge (2y+1)2^{k-1}$. Then as $f$ is decreasing, $x2^k\le z'<(x+1)2^k$, $(2y+1)2^{k-1}\le \min(m,f(z'))<(2y+2)2^{k-1}$.

\end{proof}


\begin{lemma}
\label{lemma:bornonf(z)}
    For all $x,k$ in $\mathbb{Z}_{\ge 0},$ there exists $y \in \mathbb{Z}_{\ge 0}$ such that for all $x2^k\le z<(x+1)2^k$, we have $y2^{k-1}\le \min(m,f(z))<(y+1)2^{k-1}$.
\end{lemma}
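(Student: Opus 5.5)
We argue by induction on $k$, with $x$ and $m$ fixed arbitrarily; the hypotheses of this subsection stay in force, namely that $f$ is monotone and that $g(\CHCG(f,n,m))=\min(m,f(n))\oplus n$ for all $n,m$. The statement is the \emph{iterated} form of Lemma~\ref{lemma:divide_by_two}. That lemma turns confinement of $\min(m,f(z))$ to one dyadic block of size $2^{k}$, over an interval of length $2^k$, into confinement to one dyadic block of size $2^{k-1}$ over the same interval. So the plan is to first produce confinement at a coarse scale and then shrink the vertical block one step at a time.

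The first step is to establish a base confinement at a scale $K\ge k$. Since we range over a fixed interval $x2^k\le z<(x+1)2^k$ and $0\le \min(m,f(z))\le m$, I would pick $K$ large enough that $m<2^K$ and also $K\ge k$. Then, trivially, for all $z$ in the larger dyadic interval $x'2^K\le z<(x'+1)2^K$ containing our interval (with $x'=\lfloor x2^k/2^K\rfloor$), we have $0\cdot 2^K\le\min(m,f(z))<2^K$. The hypothesis of Lemma~\ref{lemma:divide_by_two} therefore holds with $(x',K,m,y=0)$.

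The second step is the descent. Suppose that at some level $j$ with $k< j\le K$ we have a $y_j$ such that $y_j2^{j}\le \min(m,f(z))<(y_j+1)2^{j}$ for all $z$ in the dyadic interval of length $2^{j}$ containing $[x2^k,(x+1)2^k)$. Lemma~\ref{lemma:divide_by_two} gives $y'\in\{2y_j,2y_j+1\}$ with $y'2^{j-1}\le\min(m,f(z))<(y'+1)2^{j-1}$ on that whole interval of length $2^j$. Restricting to its dyadic half of length $2^{j-1}$ that contains our interval yields the hypothesis at level $j-1$ with $y_{j-1}=y'$.

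Iterating down to level $j=k$ gives $y_k$ with $y_k2^k\le\min(m,f(z))<(y_k+1)2^k$ on $[x2^k,(x+1)2^k)$. One more application of Lemma~\ref{lemma:divide_by_two} at level $k$ then produces the desired $y$ with $y2^{k-1}\le\min(m,f(z))<(y+1)2^{k-1}$, which is exactly the claim. The degenerate case $k=0$ needs a convention for $2^{-1}$. I would handle it directly: the interval is the single point $z=x$, so we can take $y$ with $\lfloor \min(m,f(x))\cdot 2\rfloor$ interpreted appropriately. Alternatively, I would note that the statement is only used for $k\ge1$.

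The main obstacle I anticipate is checking that Lemma~\ref{lemma:divide_by_two} may be applied at each level with the same $m$. Its proof uses Lemma~\ref{lemma:inneguality} in the form of distinctness of $z\oplus\min(m,f(z))$ for fixed $m$. Lemma~\ref{lemma:inneguality} is stated only for $f$ itself, with $m$ above both values, so I would verify the distinctness for $\min(m,f)$ directly. For $z>z'$ there is a move from $\CHCG(f,z,m)$ to $\CHCG(f,z',m)$, whose Grundy values are $\min(m,f(z))\oplus z$ and $\min(m,f(z'))\oplus z'$ by hypothesis, so these values differ. I would also confirm that the monotonicity of $f$ passes to $\min(m,f)$, which it does.
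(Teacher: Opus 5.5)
Your proof is correct and follows the paper's argument. The paper starts from the trivial confinement at a scale where $m<2^{k-1}$ and runs a decreasing induction on $k$: the statement at level $k+1$ (applied to $\lfloor x/2\rfloor$), restricted to the half containing $[x2^k,(x+1)2^k)$, supplies the hypothesis of Lemma~\ref{lemma:divide_by_two} at level $k$, which is exactly your explicit descent. Your extra check that the values $z\oplus\min(m,f(z))$ are pairwise distinct for a fixed $m$, via the move from $\CHCG(f,z,m)$ to $\CHCG(f,z',m)$, is a worthwhile patch. Lemma~\ref{lemma:inneguality} as stated only covers $m$ larger than both values of $f$, yet the paper invokes it for $\min(m,f)$.
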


\begin{proof}
    
We take $x,k \in \mathbb{Z}_{\ge 0}$.
We use decreasing induction on $k$.

If $k>1+\log_2(m)$, then for all $z$, $\min(m,f(z))\le2^{\log_2(m)}\le2^{k-1}$. Then if we take $y=0$, we have for all $z$, $y2^{k-1}=0\le \min(m,f(z))<1\times2^{k-1}$.

If $k\le 1+\log_2(m)$ and the statement holds for $k+1$, then there exists $y'$ in $\mathbb{Z}_{\ge 0}$ such that for all $\left\lfloor \dfrac{x}{2} \right\rfloor 2^{k+1}\le z<\left(\left\lfloor \dfrac{x}{2}\right \rfloor+1\right)2^{k+1}$, we have $y'2^{k}\le \min(m,f(z))<(y'+1)2^{k}$. If $x2^{k}\le z<(x+1)2^{k}$ then $\left \lfloor \dfrac{x}{2} \right\rfloor 2^{k+1}\le z<\left(\left \lfloor \dfrac{x}{2} \right\rfloor+1\right)2^{k+1}$ and by Lemma~\ref{lemma:divide_by_two}, either for all $x2^{k}\le z<(x+1)2^{k}$,  $(2y')2^{k-1}\le \min(m,f(z))<(2y'+1)2^{k-1}$ or for all $x2^{k}\le z<(x+1)2^{k}$, $(2y'+1)2^{k-1}\le \min(m,f(z))<(2y'+2)2^{k-1}$ giving in both case what we want.

By decreasing induction, the statement holds. 
\end{proof}

\begin{theorem}
\label{th:necessary}
    $f$ satisfies the tasty condition.
\end{theorem}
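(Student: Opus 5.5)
We derive the tasty condition directly from Lemma~\ref{lemma:bornonf(z)}, applied with a value of $m$ large enough that the truncation by $m$ is invisible on the relevant inputs. Fix $z,z'\in\mathbb{Z}_{\ge 0}$ and a positive integer $i$ with $\left\lfloor \frac{z}{2^i}\right\rfloor=\left\lfloor \frac{z'}{2^i}\right\rfloor$. Put $x=\left\lfloor \frac{z}{2^i}\right\rfloor$, so that both $z$ and $z'$ lie in the block $x2^i\le t<(x+1)2^i$. The goal is
\[
\left\lfloor \frac{f(z)}{2^{i-1}}\right\rfloor=\left\lfloor \frac{f(z')}{2^{i-1}}\right\rfloor .
\]

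Choose
\[
m\ge \max\{f(t)\mid x2^i\le t<(x+1)2^i\}.
\]
This maximum is finite, since the block is finite; if $f$ is monotone it is simply $f(x2^i)$ or $f((x+1)2^i-1)$. With this choice, $\min(m,f(t))=f(t)$ for every $t$ in the block.

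We then invoke Lemma~\ref{lemma:bornonf(z)} with $k=i$, this $x$, and this $m$. The lemma rests on Lemma~\ref{lemma:inneguality} and Lemma~\ref{lemma:divide_by_two}, both of which use the standing hypothesis that $g(\CHCG(f,n,m))=\min(m,f(n))\oplus n$ for all $n,m$. It gives some $y$ such that $y2^{i-1}\le f(t)<(y+1)2^{i-1}$ for all $t$ in the block. Hence $\left\lfloor \frac{f(z)}{2^{i-1}}\right\rfloor=y=\left\lfloor \frac{f(z')}{2^{i-1}}\right\rfloor$, which is exactly the tasty condition of Definition~\ref{def:conditionCB} in the one-dimensional case.

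The main point to check is that Lemma~\ref{lemma:bornonf(z)} may be used with an arbitrary fixed $m$, in particular one as large as we like. Its decreasing induction starts at $k>1+\log_2 m$, where everything lies in a single block, and descends through Lemma~\ref{lemma:divide_by_two}. Lemma~\ref{lemma:divide_by_two} only needs $\min(m,f)$ to be monotone (true for any $m$ because $f$ is monotone) and the injectivity of $t\mapsto t\oplus\min(m,f(t))$. That injectivity should follow from the argument of Lemma~\ref{lemma:inneguality}: there is a move from $\CHCG(f,t,m)$ to $\CHCG(f,t',m)$ for $t'<t$, so the two Grundy values $\min(m,f(t))\oplus t$ and $\min(m,f(t'))\oplus t'$ differ. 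If the cited statements turn out to be tied to a specific $m$, I would rerun the same decreasing induction with the large $m$ above, which costs nothing because on the block $\min(m,f)=f$.
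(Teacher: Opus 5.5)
Your proof is correct and follows the paper's argument. Fix $z,z',i$, apply Lemma~\ref{lemma:bornonf(z)} with $k=i$ and an $m$ large enough that $\min(m,f)=f$ where it matters, and read off the common value $y$ of both floors. Your choice of $m$, dominating $f$ on the whole block $[x2^i,(x+1)2^i)$, is slightly cleaner than the paper's $m>\max\{f(t)\mid 0\le t\le \max(z,z')\}$, which only ensures $\min(m,f)=f$ at $z$ and $z'$ (though that is all that is needed). Your remark that the injectivity of $t\mapsto t\oplus\min(m,f(t))$ holds for every $m$ also correctly patches how the paper cites Lemma~\ref{lemma:inneguality} inside Lemma~\ref{lemma:divide_by_two}.
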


\begin{proof}


     We take $z,z' \in \mathbb{Z}_{\ge 0} $ and an integer $i$ such that $\left\lfloor \dfrac{z}{2^i} \right\rfloor=\left\lfloor \dfrac{z'}{2^i} \right\rfloor$.

    By Lemma~\ref{lemma:bornonf(z)} and by taking $m>\max(\{f(x)\mid0\le x \le \max(z,z')\})$, there exists $y$ such that for all $\left\lfloor \dfrac{z}{2^i} \right\rfloor2^i\le z''<\left(\left\lfloor \dfrac{z}{2^i} \right\rfloor+1\right)2^i$, $$y2^{i-1}\le f(z'')<(y+1)2^{i-1}.$$ We have $\left\lfloor \dfrac{z}{2^i} \right\rfloor2^i\le z,z'<\left(\left\lfloor \dfrac{z}{2^i} \right\rfloor+1\right)2^i$ as $\left\lfloor \dfrac{z}{2^i} \right\rfloor=\left\lfloor \dfrac{z'}{2^i} \right\rfloor$. Then $y2^{i-1}\le f(z),f(z')<(y+1)2^{i-1}$ and $ \left\lfloor \dfrac{h(z)}{2^{i-1}} \right\rfloor=\left\lfloor \dfrac{h(z')}{2^{i-1}} \right\rfloor=y.$ 
    
\end{proof}

\begin{theorem}
\label{th:necessary_and_sufficientCB}
For all $x,y$ in $\mathbb{Z}_{\ge 0}$, $g(\CHCG(f, x, y))=g(\SCG(x,y))$ if $f$ satisfies the tasty condition.

    For all monotonous function $f$, $g(\CHCG(f, x, y))=g(\SCG(x,y))$ for all nonnegative integers $x,y$ if and only if $f$ satisfies the tasty condition.
\end{theorem}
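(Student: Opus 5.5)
The theorem is a packaging of Theorems~\ref{th:sufficient} and \ref{th:necessary}. We read ``$g(\CHCG(f,x,y))=g(\SCG(x,y))$'' in the sense of the tasty definition: the value of $\CHCG(f,x,y)$ equals that of the rectangle actually present, namely $x\oplus\min(f(x),y)=g(\SCG(x,\min(f(x),y)))$.

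\emph{First statement.} Assume $f$ satisfies the tasty condition. Theorem~\ref{th:sufficient} gives $g(\CHCG(f,x,y))=\min(y,f(x))\oplus x$ for all $x,y$. This equals $g(\SCG(x,\min(f(x),y)))$, since rectangular chocolate is two-pile Nim. No monotonicity is needed here, because the proof of Theorem~\ref{th:sufficient} uses only the tasty condition, via Lemmas~\ref{lemma:yf(x)f(x-1)}--\ref{lemma:equality_f(x)f(y)}.

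\emph{Second statement.} Let $f$ be monotonous. If $f$ is nondecreasing it is an increasing chocolate function; if nonincreasing, a decreasing one. So we are in the setting of the Necessary Condition subsection, and $\CHCG(f,x,y)$ coincides with $\CB(f,x,y)$ or $\DCG(f,x,y)$.

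The ``if'' direction is the first statement. For ``only if'', suppose $g(\CHCG(f,x,y))=x\oplus\min(f(x),y)$ for all $x,y$. This is exactly the standing hypothesis used in Lemma~\ref{lemma:inneguality}, which gives injectivity of $z\mapsto z\oplus f(z)$ when $m$ is large. It is then carried through Lemma~\ref{lemma:divide_by_two} and Lemma~\ref{lemma:bornonf(z)}. Theorem~\ref{th:necessary} then yields that $f$ satisfies the tasty condition. For the increasing case we may alternatively cite Theorem~\ref{thm:inctasty}.

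The main point needing care is that the proof of Lemma~\ref{lemma:divide_by_two} is written for decreasing $f$, with the increasing case only sketched. If one wants a self-contained argument, the increasing case is handled symmetrically.

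Suppose $f$ is increasing and $\min(m,f((2x+1)2^{k-1}-1))\ge(2y+1)2^{k-1}$. Then every $z$ in the right half lies in the upper-right quarter. The left half must then avoid the upper-left quarter, which carries the same $2^{k-1}$ Nim-sum values as the lower-right quarter. Monotonicity pushes any point of the left half that lies in the upper-left quarter into a conflict, so the left half avoids the top, and by the pigeonhole count of Lemma~\ref{lemma:inneguality} this produces a contradiction unless the whole interval stays in one half.

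Otherwise, every $z$ in the left half lies in the lower-left quarter. The pigeonhole argument then forbids the lower-right quarter for the right half, so the right half also stays in the bottom half.

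With this case settled, Lemma~\ref{lemma:bornonf(z)} and Theorem~\ref{th:necessary} go through unchanged, completing the equivalence.
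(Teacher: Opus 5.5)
Your main line matches the paper's proof. The first sentence and the ``if'' direction are Theorem~\ref{th:sufficient}, and the ``only if'' direction is Theorem~\ref{th:necessary} (or Theorem~\ref{thm:inctasty} for increasing $f$). Reading $g(\SCG(x,y))$ as $x\oplus\min(f(x),y)$ is the right repair of the literal statement. The problem is the part you add beyond the paper, the self-contained increasing case of Lemma~\ref{lemma:divide_by_two}. As written, it pairs the quarters wrongly and contradicts its own case hypotheses.

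Write $L,R$ for the left and right halves of $[x2^k,(x+1)2^k)$, $B,T$ for the bottom and top halves of $[y2^k,(y+1)2^k)$, and $v=\min(m,f(z))$. Then $\lfloor z/2^{k-1}\rfloor\oplus\lfloor v/2^{k-1}\rfloor$ equals $2x\oplus 2y$ on $L\times B$ and $R\times T$, and equals $(2x\oplus 2y)+1$ on $L\times T$ and $R\times B$. So the quarters sharing Nim-sum values are lower-left with upper-right, and upper-left with lower-right.

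\emph{First case.} The last point of $L$ has value in $T$, so $v\in T$ for every $z\in R$. These $2^{k-1}$ distinct values exhaust the block shared with $L\times B$. Hence $L$ must avoid the \emph{bottom}, and so its values lie in $T$. Your sentence ``the left half avoids the top'' is incompatible with the case hypothesis that its last point is in $T$.

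\emph{Second case.} All of $L$ lies in $L\times B$, which exhausts the block shared with $R\times T$. So the \emph{upper}-right quarter is forbidden, and that is what puts $R$ in the bottom half. Forbidding the lower-right quarter, as you wrote, would force the opposite conclusion.

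The pigeonhole step also needs injectivity of $z\mapsto z\oplus\min(m,f(z))$ for the fixed finite $m$ of Lemma~\ref{lemma:bornonf(z)}. For unbounded increasing $f$ you cannot choose $m$ above all values of $f$. This injectivity follows from the same one-move argument as Lemma~\ref{lemma:inneguality}, but it is not literally that lemma's statement. With the sketch corrected this way, or dropped in favour of citing Theorem~\ref{thm:inctasty}, your proof is complete.
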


\begin{proof}
    By Theorem~\ref{th:necessary}, if $f$ is monotonous, the tasty condition is a necessary condition to have for all $x,y$ in $\mathbb{Z}_{\ge 0}$, $g(\CHCG(f, x, y))=g(\SCG(x,y))$.

    By Theorem~\ref{th:sufficient}, the tasty condition is a sufficient condition to have for all $x,y$ in $\mathbb{Z}_{\ge 0}$, $g(\CHCG(f, x, y))=g(\SCG(x,y))$. 
\end{proof}

Next, we show that, if an increasing chocolate bar is tasty, there exists a corresponding decreasing chocolate bar, and the cross-sections of the two pieces of chocolate fit together perfectly to form a single rectangle.

\begin{theorem}
\label{thm:fitting}
    For any increasing chocolate function $f$ and nonnegative integer $x$, let     $F_x(z) = 2^{\lceil \log_2 f(x) \rceil}-f(z)-1$. If $\CB(f,x,y)$ is tasty for all nonnegative integers $x,y$, then $\DCG(F_x,x,y)$ is tasty for all nonnegative integers $x,y$.
\end{theorem}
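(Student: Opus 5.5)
The plan is to chain the earlier results: Theorem~\ref{thm:inctasty}, then Theorem~\ref{theorem:bounded_fitting}, then Theorem~\ref{th:sufficient}. Fix the increasing chocolate function $f$ and the column index $x$, and write $K=\lceil \log_2 f(x)\rceil$. First, since $\CB(f,x,y)$ is tasty for all $x,y$, Theorem~\ref{thm:inctasty} shows that $f$ satisfies the tasty condition.

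Next I want to apply Theorem~\ref{theorem:bounded_fitting}, which needs a function bounded by $2^K-1$. The only values of $f$ that matter are $f(z)$ for $0\le z\le x$, and by monotonicity these are at most $f(x)\le 2^K$. Two repairs are needed.
\begin{itemize}
\item The edge case $f(x)=2^K$ exactly (including $f(x)=1$, $K=0$) would make $F_x$ negative. I would handle it by reading the exponent as the number of bits of $f(x)$, i.e.\ the smallest $K$ with $f(x)\le 2^K-1$, which I believe is the intended meaning.
\item The truncated function $\tilde f(z)=\min(f(z),2^K-1)$ satisfies the tasty condition by Theorems~\ref{theorem:mintasty} and \ref{theorem:csttasty}, and it agrees with $f$ on $[0,x]$.
\end{itemize}
Theorem~\ref{theorem:bounded_fitting} then shows that $\tilde F(z)=2^K-1-\tilde f(z)$ satisfies the tasty condition. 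Since $f$ is increasing, $\tilde F$ is decreasing, so it is a decreasing chocolate function, and it coincides with $F_x$ on $[0,x]$.

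Then I apply Theorem~\ref{th:sufficient} to $\tilde F$. It gives $g(\CHCG(\tilde F,n,m))=\min(m,\tilde F(n))\oplus n$ for all $n,m$. Because $\tilde F$ is decreasing, $\CHCG(\tilde F,\cdot,\cdot)$ is exactly $\DCG(\tilde F,\cdot,\cdot)$. Tastiness of $\DCG(F_x,x,y)$ only involves positions $\DCG(F_x,i,j)$ with $i\le x$, and their whole game trees only involve columns $\le x$, where $F_x=\tilde F$. So the game values agree and $\DCG(F_x,x,y)$ is tasty for every $y$.

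The remaining issue is the quantifier "for all $x$" in the conclusion, since $F_x$ itself depends on $x$. I read the statement with $x$ fixed in the definition of $F_x$ and the tastiness claimed for all positions up to column $x$. For larger columns, $F_x$ as literally defined may become negative, so I would restrict to that interpretation.

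The main obstacle is bookkeeping rather than mathematics. One must justify replacing $F_x$ by the globally defined, nonnegative, tasty function $\tilde F$, and fix the exponent convention so that the boundedness hypothesis of Theorem~\ref{theorem:bounded_fitting} actually holds. The game-theoretic content is entirely carried by Theorem~\ref{th:sufficient}.
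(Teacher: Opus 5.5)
Your proposal is correct and follows the paper's proof essentially step for step: obtain the tasty condition for $f$ from the increasing-case characterization, truncate by a constant via Theorem~\ref{theorem:mintasty}, complement via Theorem~\ref{theorem:bounded_fitting}, apply the sufficiency result, and transfer to $F_x$ because the two functions agree on columns $0,\ldots,x$. The paper truncates at $f(x)$ rather than at $2^K-1$, which makes no difference; your handling of the case where $f(x)$ is a power of two is a genuine improvement, since there the paper's own use of Theorem~\ref{theorem:bounded_fitting} with exponent $\lceil \log_2 f(x)\rceil$ violates the boundedness hypothesis and gives $F_x(x)=-1$.
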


\begin{proof}
    If for all $x,y$ in $\mathbb{Z}_{\ge 0}$, $\CB(f,x,y)$ is tasty then by Theorem~\ref{th:necessary_and_sufficientCB}, $f$ satisfies the tasty condition.

    Let $f'(z)=\min(f(z),f(x))$ and $F'_x(z)=2^{\lceil \log_2 \max(f') \rceil}-f'(z)-1$ then $f'$ is bounded and by Theorem~\ref{theorem:mintasty} satisfies the tasty condition and by Theorem~\ref{theorem:bounded_fitting}, $F'_x$ satisfies the tasty condition too.
    
    By Theorem~\ref{th:necessary_and_sufficientCB}, $\DCG(F'_x,x,y)$ is tasty but for $z\le x$, $\max(2^{\lceil \log_2 f(x) \rceil}-f(z)-1, 2^{\lceil \log_2 f(x) \rceil}-f(x)-1)=2^{\lceil \log_2 f(x) \rceil}-f(z)-1$ and then $\DCG(F_x,x,y)$ is tasty.
\end{proof}

\subsection{Sufficient Condition with many functionnal dimension}

In this subsection, we prove that the tasty condition is a sufficient condition to have \begin{align*}
    &g(\MCHN(f_1,\ldots  ,f_{\ell},n_1,\ldots  ,n_k,m_1,\ldots  ,m_{\ell})) \\
    &=\min(m_1,f_1(n_1,\ldots  ,n_k))\oplus\cdots  \oplus \min(m_{\ell},f_{\ell}(n_1,\ldots  ,n_k))\oplus n_1\oplus\cdots  \oplus n_k.
\end{align*}
In this subsection, we take functions $f_1,\ldots  ,f_{\ell}$ that satisfy the tasty condition.

\begin{lemma}
\label{lemma:equality_f(x)f(y)multif}
    For any $y,i \in \mathbb{Z}_{\ge 0}$ and $m_1,...,m_{\ell},x_1,\ldots  ,x_{i-1},x_{i+1},\ldots  ,x_k$, 
    \begin{align*}
        &\{\min(m_1,f_1(x_1,\ldots  ,x_i,\ldots  ,x_k))\oplus \cdots   \oplus \min(m_{\ell},f_{\ell}(x_1,\ldots  ,x_i,\ldots  ,x_k)) \\
        &\oplus x_1 \oplus \cdots   \oplus x_i \oplus \cdots   \oplus x_k\mid0\le x_i < y\} \\
        =&\{\min(m_1,f_1(x_1,\ldots  ,y,\ldots  ,x_k))\oplus \cdots   \oplus \min(m_{\ell},f_{\ell}(x_1,\ldots  ,y,\ldots  ,x_k)) \\
        &\oplus x_1 \oplus \cdots   \oplus x_i \oplus \cdots   \oplus x_k\mid0\le x_i < y\}
    \end{align*}
\end{lemma}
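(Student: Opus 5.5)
The plan is to reduce this lemma to the one-dimensional Lemma~\ref{lemma:equality_f(x)f(y)} by freezing every coordinate except $x_i$, together with the bounds $m_1,\ldots,m_\ell$. Fix $i$, the other coordinates $x_j$ for $j\ne i$, and $m_1,\ldots,m_\ell$. Define the one-variable function
$$h(z)=\min(m_1,f_1(x_1,\ldots,x_{i-1},z,x_{i+1},\ldots,x_k))\oplus\cdots\oplus\min(m_\ell,f_\ell(x_1,\ldots,x_{i-1},z,x_{i+1},\ldots,x_k))\oplus c,$$
where $c=\bigoplus_{j\ne i}x_j$ is a constant. The left-hand set is then $\{h(x_i)\oplus x_i\mid 0\le x_i<y\}$, and the right-hand set is $\{h(y)\oplus x_i\mid 0\le x_i<y\}$. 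This is exactly the form of Lemma~\ref{lemma:equality_f(x)f(y)} with $f$ replaced by $h$.

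It remains to check that $h$ satisfies the tasty condition in one variable. First, each restriction $z\mapsto f_j(x_1,\ldots,z,\ldots,x_k)$ satisfies it, because Definition~\ref{def:conditionCB} for a function of $n$ variables is imposed coordinatewise for every choice of the remaining arguments. The constant functions $m_j$ and $c$ satisfy it by Theorem~\ref{theorem:csttasty}. Theorem~\ref{theorem:mintasty} then gives that each $\min(m_j,f_j(\ldots))$ satisfies it, and repeated use of Theorem~\ref{theorem:nimsumtasty} gives that the Nim--sum $h$ does too. These closure theorems are stated for functions of one variable $z$, which is exactly our situation once the other coordinates are frozen.

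With this in hand, I would apply Lemmas~\ref{lemma:yf(x)f(x-1)}, \ref{lemma:setequality} and \ref{lemma:equality_f(x)f(y)} verbatim to $h$. The only property of $f$ those proofs use is the tasty condition. Lemma~\ref{lemma:equality_f(x)f(y)} then yields $\{h(x)\oplus x\mid 0\le x<y\}=\{h(y)\oplus x\mid 0\le x<y\}$, which is the claimed equality.

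There is no real obstacle. The one point needing care is to say explicitly that the one-dimensional lemmas never used monotonicity or any special structure of $f$, only the tasty condition, so they apply to the composite function $h$. I would also note the convention that the set is empty when $y=0$.
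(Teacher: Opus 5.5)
Your proposal is correct and follows the paper's route. You freeze every coordinate except $x_i$, use Theorems~\ref{theorem:csttasty}, \ref{theorem:mintasty} and \ref{theorem:nimsumtasty} to show the resulting one-variable function is tasty, and invoke Lemma~\ref{lemma:equality_f(x)f(y)}. The only cosmetic difference is that you absorb the constant $\bigoplus_{j\ne i}x_j$ into $h$, while the paper applies the one-dimensional lemma without it and then Nim-adds that constant to both sets elementwise.
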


\begin{proof}

    We take $y,i \in \mathbb{Z}_{\ge 0}$ and $m_1,...,m_{\ell},x_1,\ldots  ,x_{i-1},x_{i+1},\ldots  ,x_k.$

    By Lemma~\ref{lemma:equality_f(x)f(y)},  Theorem~\ref{theorem:mintasty} and Theorem~\ref{theorem:nimsumtasty}, 
    \begin{align*}
        &\{\min(m_1,f_1(x_1,\ldots  ,x_i,\ldots  ,x_k))\oplus \cdots   \oplus \min(m_{\ell},f_{\ell}(x_1,\ldots  ,x_i,\ldots  ,x_k))  \oplus x_i \mid0\le x_i < y\} \\
        =&\{\min(m_1,f_1(x_1,\ldots  ,y,\ldots  ,x_k)\oplus \cdots   \oplus \min(m_{\ell},f_{\ell}(x_1,\ldots  ,y,\ldots  ,x_k)) \oplus x_i \mid0\le x_i < y\}
    \end{align*}

    Then, for any $0\le x_i < y$, there exists $0 \le x'_i < y$ such that

    $\min(m_1,f_1(x_1,\ldots  ,x_i,\ldots  ,x_k))\oplus \cdots   \oplus \min(m_{\ell},f_{\ell}(x_1,\ldots  ,x_i,\ldots  ,x_k))  \oplus x_i=\min(m_1,f_1(x_1,\ldots  ,y,\ldots  ,x_k))\oplus \cdots   \oplus \min(m_{\ell},f_{\ell}(x_1,\ldots  ,y,\ldots  ,x_k)) \oplus x'_i$. Thus, we have 
    \begin{align*}
        &\min(m_1,f_1(x_1,\ldots  ,x_i,\ldots  ,x_k))\oplus \cdots   \oplus \min(m_{\ell},f_{\ell}(x_1,\ldots  ,x_i,\ldots  ,x_k)) \oplus x_1 \oplus \cdots   \oplus x_i \oplus \cdots   \oplus x_k \\
        =&\min(m_1,f_1(x_1,\ldots  ,y,\ldots  ,x_k))\oplus \cdots   \oplus \min(m_{\ell},f_{\ell}(x_1,\ldots  ,y,\ldots  ,x_k)) \oplus x_1 \oplus \cdots   \oplus x'_i \oplus \cdots   \oplus x_k
    \end{align*}
    giving us that 
    \begin{align*}
        &\{\min(m_1,f_1(x_1,\ldots  ,x_i,\ldots  ,x_k))\oplus \cdots   \oplus \min(m_{\ell},f_{\ell}(x_1,\ldots  ,x_i,\ldots  ,x_k)) \\
      &\oplus x_1 \oplus \cdots   \oplus x_i \oplus \cdots   \oplus x_k\mid0\le x_i < y\} \\
        \subseteq &\{\min(m_1,f_1(x_1,\ldots  ,y,\ldots  ,x_k))\oplus \cdots   \oplus \min(m_{\ell},f_{\ell}(x_1,\ldots  ,y,\ldots  ,x_k)) \\
        &\oplus x_1 \oplus \cdots   \oplus x_i \oplus \cdots   \oplus x_k\mid0\le x_i < y\}.
    \end{align*}
    By a similar way,
    \begin{align*}
        &\{\min(m_1,f_1(x_1,\ldots  ,x_i,\ldots  ,x_k))\oplus \cdots   \oplus \min(m_{\ell},f_{\ell}(x_1,\ldots  ,x_i,\ldots  ,x_k)) \\
        &\oplus x_1 \oplus \cdots   \oplus x_i \oplus \cdots   \oplus x_k\mid0\le x_i < y\} \\
        \supseteq &\{\min(m_1,f_1(x_1,\ldots  ,y,\ldots  ,x_k))\oplus \cdots   \oplus \min(m_{\ell},f_{\ell}(x_1,\ldots  ,y,\ldots  ,x_k)) \\
        &\oplus x_1 \oplus \cdots   \oplus x_i \oplus \cdots   \oplus x_k\mid0\le x_i < y\}.
    \end{align*}
    Thus, we have
    \begin{align*}
        &\{\min(m_1,f_1(x_1,\ldots  ,x_i,\ldots  ,x_k))\oplus \cdots   \oplus \min(m_{\ell},f_{\ell}(x_1,\ldots  ,x_i,\ldots  ,x_k)) \\
        &\oplus x_1 \oplus \cdots   \oplus x_i \oplus \cdots   \oplus x_k\mid0\le x_i < y\} \\
        = &\{\min(m_1,f_1(x_1,\ldots  ,y,\ldots  ,x_k))\oplus \cdots   \oplus \min(m_{\ell},f_{\ell}(x_1,\ldots  ,y,\ldots  ,x_k)) \\
        &\oplus x_1 \oplus \cdots   \oplus x_i \oplus \cdots   \oplus x_k\mid0\le x_i < y\}.
    \end{align*}
\end{proof}

\begin{theorem}
\label{th:sufficientmultif}
   For any $n_1, \ldots  , n_k$, $m_1, \ldots  , m_{\ell} \in \mathbb{Z}_{\ge 0}$,  
   \begin{align*}
      &g(\MCHN(f_1,\ldots  ,f_{\ell},n_1,\ldots  ,n_k,m_1,\ldots  ,m_{\ell})) \\
       =&\min(m_1,f_1(n_1,\ldots  ,n_k))\oplus \cdots  \oplus  \min(m_{\ell},f_{\ell}(n_1,\ldots  ,n_k))\oplus n_1\oplus \cdots   \oplus n_k.
   \end{align*}
\end{theorem}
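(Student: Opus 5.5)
We would prove Theorem~\ref{th:sufficientmultif} by strong induction on $n_1+\cdots+n_k+m_1+\cdots+m_\ell$, mirroring the proof of Theorem~\ref{th:sufficient}. Write $h_j=\min(m_j,f_j(n_1,\ldots,n_k))$ for $1\le j\le\ell$.

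The options of $\MCHN(f_1,\ldots,f_\ell,n_1,\ldots,n_k,m_1,\ldots,m_\ell)$ are of two kinds. In the first kind, one of the piles $n_i$ is reduced to some $n'_i<n_i$ while all $m_j$ are kept. In the second kind, one head pile is reduced to some $m'_j<h_j$; this is the same position as replacing $m_j$ by $m'_j$, since then $\min(m'_j,f_j(n_1,\ldots,n_k))=m'_j$. In both cases the index sum strictly decreases, so the induction hypothesis gives the Sprague--Grundy value of each option. The options of the first kind have values
\[
\min(m_1,f_1(\ldots,n'_i,\ldots))\oplus\cdots\oplus\min(m_\ell,f_\ell(\ldots,n'_i,\ldots))\oplus n_1\oplus\cdots\oplus n'_i\oplus\cdots\oplus n_k ,
\]
and the options of the second kind have values $h_1\oplus\cdots\oplus m'_j\oplus\cdots\oplus h_\ell\oplus n_1\oplus\cdots\oplus n_k$.

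The key step is Lemma~\ref{lemma:equality_f(x)f(y)multif}, applied with $y=n_i$ and the other coordinates fixed. For each $i$, it shows that the set of values of the first kind coming from pile $i$ equals the set obtained by freezing every head at its current value $h_j$:
\[
\{h_1\oplus\cdots\oplus h_\ell\oplus n_1\oplus\cdots\oplus n'_i\oplus\cdots\oplus n_k\mid 0\le n'_i<n_i\}.
\]
Taking the union over all $i$ together with the options of the second kind, the full set of option values coincides with the set of option values of the ordinary Nim position $\MRCG(n_1,\ldots,n_k,h_1,\ldots,h_\ell)$. The mex of that set is $n_1\oplus\cdots\oplus n_k\oplus h_1\oplus\cdots\oplus h_\ell$ by the Nim formula, which is exactly the claim. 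The base case, where every index is $0$, is the terminal position with value $0$.

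The main obstacle is checking that Lemma~\ref{lemma:equality_f(x)f(y)multif} really applies. It relies on the map $x_i\mapsto\bigoplus_j\min(m_j,f_j(\ldots,x_i,\ldots))$, viewed as a one-variable function of $x_i$ with the other coordinates fixed, satisfying the tasty condition. This follows from Theorems~\ref{theorem:csttasty}, \ref{theorem:mintasty} and~\ref{theorem:nimsumtasty}, because the tasty condition of Definition~\ref{def:conditionCB} is imposed coordinatewise, so each restriction of $f_j$ is tasty in one variable. I would also note why this works even though the heads may rise when a pile $n_i$ is reduced: the induction uses actual values $\min(m_j,f_j(\cdot))$ with the caps $m_j$ unchanged, rather than current head sizes, so no case split between $m_j\le f_j$ and $m_j>f_j$ is needed.
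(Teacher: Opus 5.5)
Your proposal is correct and matches the paper's proof essentially step for step. It uses strong induction on $\sum_i n_i+\sum_j m_j$ and splits the options into pile moves and head moves. It then applies Lemma~\ref{lemma:equality_f(x)f(y)multif} to freeze the heads at $h_j=\min(m_j,f_j(n_1,\ldots,n_k))$, and finishes with the Nim formula for $\MRCG(n_1,\ldots,n_k,h_1,\ldots,h_\ell)$. Your justification of that lemma via Theorems~\ref{theorem:csttasty}, \ref{theorem:mintasty} and~\ref{theorem:nimsumtasty} on the one-variable restrictions is what the paper does; you also state explicitly that the constant $m_j$ is tasty, which the paper leaves implicit.
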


\begin{proof}

     We prove this by induction on $\sum_{j=1}^{\ell} m_j+\sum_{i=1}^k n_i$.

    By ${\rm mex}$ rule and induction hypothesis, we have
    \begin{align*}
        &g(\MCHN(f_1,\ldots  ,f_{\ell},n_1,\ldots  ,n_k,m_1,\ldots  ,m_{\ell})) \\
        =& {\rm mex} (\{g(\MCHN(f_1,\ldots  ,f_{\ell},n_1,\ldots  ,n_i',\ldots  ,n_k,m_1,\ldots  ,m_{\ell})), \\
        &g(\MCHN(f_1,\ldots  ,f_{\ell},n_1,\ldots  ,n_k,m_1,\ldots  ,m_j',\ldots  ,m_{\ell})) \\
        &\mid 1\le i \le k, 0\le n_i'< n_i, 1 \le j \le {\ell}, 0 \le m_j'<\min(m_j,f_j(n_1,\ldots  ,n_k) )\}) \\
         =&{\rm mex} (\{\min(m_1,f_1(n_1,\ldots  ,n_i',\ldots  ,n_k))\oplus \cdots   \oplus \min(m_{\ell},f_{\ell}(n_1,\ldots  ,n_i',\ldots  ,n_k))\oplus n_1\oplus \cdots   \oplus n'_i \oplus \cdots   \oplus n_k, \\
         &\min(m_1,f_1(n_1,\ldots  ,n_i,\ldots  ,n_k))\oplus \cdots   \oplus m_j' \oplus \cdots   \oplus \min(m_{\ell},f_{\ell}(n_1,\ldots  ,n_i,\ldots  ,n_k)) \oplus n_1\oplus \cdots   \oplus n_k \\
         &\mid 1\le i \le k, 0\le n_i'< n_i,  1 \le j \le {\ell}, 0 \le m_j'<\min(m_j,f_j(n_1,\ldots  ,n_k)) \}).
    \end{align*}

    By Lemma~\ref{lemma:equality_f(x)f(y)multif}, 
    \begin{align*}
        &\{\min(m_1,f_1(n_1,\ldots  ,n_i',\ldots  ,n_k))\oplus \cdots   \oplus \min(m_{\ell},f_{\ell}(n_1,\ldots  ,n_i',\ldots  ,n_k))\oplus n_1\oplus \cdots   \oplus n'_i \oplus \cdots   \oplus n_k, \\
        &\min(m_1,f_1(n_1,\ldots  ,n_i,\ldots  ,n_k))\oplus \cdots   \oplus m_j' \oplus \cdots   \oplus \min(m_{\ell},f_{\ell}(n_1,\ldots  ,n_i,\ldots  ,n_k)) \oplus n_1\oplus \cdots   \oplus n_k \\
        &\mid 1\le i \le k, 0\le n_i'< n_i, 1 \le j \le {\ell}, 0 \le m_j'<\min(m_j,f_j(n_1,\ldots  ,n_k)) \} \\
        =&\{\min(m_1,f_1(n_1,\ldots  ,n_i,\ldots  ,n_k))\oplus \cdots   \oplus \min(m_{\ell},f_{\ell}(n_1,\ldots  ,n_i,\ldots  ,n_k)) \oplus n_1\oplus \cdots   \oplus n'_i \oplus \cdots   \oplus n_k, \\
        &\min(m_1,f_1(n_1,\ldots  ,n_i,\ldots  ,n_k))\oplus \cdots   \oplus m_j' \oplus \cdots   \oplus \min(m_{\ell},f_{\ell}(n_1,\ldots  ,n_i,\ldots  ,n_k)) \oplus n_1\oplus \ldots   \oplus n_k \\
        &\mid 1\le i \le k, 0\le n_i'< n_i, 1 \le j \le {\ell}, 0 \le m_j'<\min(m_j,f_j(n_1,\ldots  ,n_k)) \}, 
    \end{align*}
    and then 
    \begin{align*}
        &g(\MCHN(f_1,\ldots  ,f_{\ell},n_1,\ldots  ,n_k,m_1,\ldots  ,m_{\ell})) \\
    =&{\rm mex} (\{\min(m_1,f_1(n_1,\ldots  ,n_k))\oplus \cdots   \oplus \min(m_{\ell},f_{\ell}(n_1,\ldots  ,n_k)) \oplus n_1\oplus \cdots   \oplus n'_i \oplus \cdots   \oplus n_k, \\
    &\min(m_1,f_1(n_1,\ldots  ,n_i,\ldots  ,n_k))\oplus \cdots   \oplus m_j' \oplus \cdots   \oplus \min(m_{\ell},f_{\ell}(n_1,\ldots  ,n_i,\ldots  ,n_k)) \oplus n_1\oplus \cdots   \oplus n_k \\
    &\mid 1\le i \le k, 0\le n_i'< n_i, 1 \le j \le {\ell}, 0 \le m_j'<\min(m_j,f_j(n_1,\ldots  ,n_k))  \}) \\
    =&g(\MRCG(\min(m_1,f_1(n_1,\ldots  ,n_k)),\ldots  ,\min(m_{\ell},f_{\ell}(n_1,\ldots  ,n_k)),n_1,\ldots  ,n_k)) \\
    =&\min(m_1,f_1(n_1,\ldots  ,n_k))\oplus \cdots   \oplus \min(m_{\ell},f_{\ell}(n_1,\ldots  ,n_k))\oplus n_1\oplus \cdots   \oplus n_k.
    \end{align*}

\end{proof}

\subsection{Necessary Condition in higher dimension}

In order to prove the necessary condition in the case where we have multiple functional dimensions, it is enough to prove it in the case where we have only one functional dimension.

In this subsection, we prove that the tasty condition is a necessary condition to have $$g(\CHN(f,n_1,\ldots  ,n_k,m))=\min(m,f(n_1,\ldots  ,n_k))\oplus n_1\oplus \cdots   \oplus n_k.$$

In this subsection, we take a function $f$, monotonous according to all dimensions such that for all $n_1,\ldots  ,n_k$,$m$ in $\mathbb{Z}_{\ge 0}$,  $$g(\CHN(f,n_1,\ldots  ,n_k,m))=\min(m,f(n_1,\ldots  ,n_k))\oplus n_1 \oplus \cdots   \oplus n_k.$$






\begin{lemma}
\label{lemma:innegualitymultid}
    For any $i,x_1,\ldots  ,x_i,x_i',\ldots  ,x_k,y \in \mathbb{Z}_{\ge 0}$,if $x_i\ne x_i'$ then 
    $$\min(y,f(x_1,\ldots  ,x_i,\ldots  ,x_k))\oplus x_i \ne \min(y,f(x_1,\ldots  ,x_i',\ldots  ,x_k))\oplus x_i'.$$
\end{lemma}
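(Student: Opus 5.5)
We argue exactly as in Lemma~\ref{lemma:inneguality}: the two positions are linked by a single move, so the standing hypothesis on Sprague--Grundy values forces the two quantities to differ. Fix $i$, the coordinates $x_1,\ldots,x_{i-1},x_{i+1},\ldots,x_k$, the value $y$, and $x_i\ne x_i'$. Without loss of generality $x_i>x_i'$.

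Write $P=\CHN(f,x_1,\ldots,x_i,\ldots,x_k,y)$ and $P'=\CHN(f,x_1,\ldots,x_i',\ldots,x_k,y)$. Reducing the $i$-th pile from $x_i$ to $x_i'$ is a legal move from $P$. This is because in Changing Head Nim one may remove any positive number of tokens from any of the first $k$ piles. Doing so leaves $y$ untouched, and the head pile simply becomes $\min(f(\ldots,x_i',\ldots),y)$. So $P'$ is an option of $P$, and by the mex definition of the Sprague--Grundy value, $g(P)\ne g(P')$.

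By the standing assumption of this subsection,
\begin{align*}
g(P)&=\min(y,f(x_1,\ldots,x_i,\ldots,x_k))\oplus x_1\oplus\cdots\oplus x_i\oplus\cdots\oplus x_k,\\
g(P')&=\min(y,f(x_1,\ldots,x_i',\ldots,x_k))\oplus x_1\oplus\cdots\oplus x_i'\oplus\cdots\oplus x_k.
\end{align*}
Both expressions contain the common factor $\bigoplus_{j\ne i}x_j$. Nim-summing both sides with this factor cancels it, since $\oplus$ is a group operation in which every element is its own inverse. This leaves exactly the claimed inequality
\[
\min(y,f(x_1,\ldots,x_i,\ldots,x_k))\oplus x_i\ne\min(y,f(x_1,\ldots,x_i',\ldots,x_k))\oplus x_i'.
\]

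Unlike Lemma~\ref{lemma:inneguality}, no large choice of $y$ is needed, because the statement keeps the $\min$ with $y$. The only step needing care is checking that the move is legal in $\CHN$, i.e.\ that changing a coordinate pile is always allowed regardless of the head pile. The definition of Changing Head Nim gives this directly, so I expect no real obstacle.
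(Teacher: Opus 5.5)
Your proposal is correct and is essentially the paper's proof. The paper also notes that reducing the $i$-th pile gives a legal move from $\CHN(f,x_1,\ldots,x_i,\ldots,x_k,y)$ to $\CHN(f,x_1,\ldots,x_i',\ldots,x_k,y)$, so the assumed Sprague--Grundy values must differ, and then removes the common term $\bigoplus_{j\ne i}x_j$ because $x\mapsto x\oplus z$ is a bijection.
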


\begin{proof}
    Let $i,x_1,\ldots  ,x_i,x_i',\ldots  ,x_k,y\in \mathbb{Z}_{\ge 0}$ with $x_i\ne x_i'$. Suppose without loss of generality that $x_i>x_i'$.
    As for all $n_1,\ldots  ,n_k, m \in \mathbb{Z}_{\ge 0}$,  
    $$g(\CHN(f,n_1,\ldots  ,n_k,m))=\min(m,f(n_1,\ldots  ,n_k))\oplus n_1 \oplus \cdots   \oplus n_k,$$
    we have 
    $$g(\CHN(f,x_1,\ldots  ,x_i,\ldots  ,x_k,y))=\min(y,f(x_1,\ldots  ,x_i,\ldots  ,x_k))\oplus x_1 \oplus \cdots  \oplus x_i \oplus \cdots   \oplus x_k$$
    and 
    $$g(\CHN(f,x_1,\ldots  ,x_i',\ldots  ,x_k,y))=\min(y,f(x_1,\ldots  ,x_i',\ldots  ,x_k))\oplus x_1 \oplus \cdots  \oplus x_i' \oplus \cdots  \oplus x_k.$$ 
    However, there is a move from $\CHN(f,x_1,\ldots  ,x_i,\ldots  ,x_k,y)$ to $\CHN(f,x_1,\ldots  ,x_i',\ldots  ,x_k,y)$ then the Sprague--Grundy values must be different and $$\min(y,f(x_1,\ldots  ,x_i,\ldots  ,x_k))\oplus x_1 \oplus \cdots  \oplus x_i \oplus \cdots   \oplus x_k \ne \min(y,f(x_1,\ldots  ,x_i',\ldots  ,x_k))\oplus x_1 \oplus \cdots  \oplus x_i' \oplus \cdots  \oplus x_k.$$
    Since $f:x \xrightarrow{} x\oplus z$ is a bijection for all $z$, $$\min(y,f(x_1,\ldots  ,x_i,\ldots  ,x_k))\oplus x_i \ne \min(y,f(x_1,\ldots  ,x_i',\ldots  ,x_k))\oplus x_i'.$$
\end{proof}

\begin{lemma}
 \label{lemma:divide_by_twomultid}
    If for $x_1,\ldots  ,x_k,m,j,i \in \mathbb{Z}_{\ge 0}$,there exists $y$ such as for all $x_i2^j\le z<(x_i+1)2^j$ we have $$y2^j\le \min(f(x_1,\ldots  ,x_{i-1},z,x_{i+1}\ldots  ,x_k),m)<(y+1)2^j,$$ then 
    \begin{itemize}
        \item either for all $x_i2^j\le z'<(x_i+1)2^j$, $$(2y)2^{j-1}\le \min(f(x_1,\ldots  ,x_{i_1},z',x_{i+1}\ldots  ,x_k),m)<(2y+1)2^{j-1},$$
        \item or for all $x_i2^j\le z'<(x_i+1)2^j$, $$(2y+1)2^{j-1}\le \min(f(x_1,\ldots  ,x_{i-1},z',x_{i+1}\ldots  ,x_k),m)<(2y+2)2^{j-1}.$$
    \end{itemize}
\end{lemma}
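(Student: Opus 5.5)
We follow the proof of Lemma~\ref{lemma:divide_by_two} almost verbatim. The other coordinates $x_1,\ldots,x_{i-1},x_{i+1},\ldots,x_k$ and $m$ stay fixed throughout. Write $h(z)=\min(f(x_1,\ldots,x_{i-1},z,x_{i+1},\ldots,x_k),m)$. Because $f$ is monotonous according to all dimensions, $h$ is monotonous in $z$; taking the minimum with the constant $m$ preserves this. Lemma~\ref{lemma:innegualitymultid}, applied with $y=m$, states that $z\mapsto z\oplus h(z)$ is injective. These two facts are exactly the ingredients used in the one-dimensional proof, with $\min(m,f(z))$ replaced by $h(z)$.

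First, split the interval $x_i2^j\le z<(x_i+1)2^j$ into a left half $x_i2^j\le z<(2x_i+1)2^{j-1}$ and a right half $(2x_i+1)2^{j-1}\le z<(x_i+1)2^j$. Split the value range $[y2^j,(y+1)2^j)$ into a lower half and an upper half in the same way. The bit computation from the proof of Lemma~\ref{lemma:divide_by_two} goes through unchanged. For (left half, upper half) and for (right half, lower half) alike, we get
\[
((2x_i\oplus 2y)+1)2^{j-1}\le z\oplus h(z)<((2x_i\oplus 2y)+2)2^{j-1}.
\]
This window contains exactly $2^{j-1}$ integers, which equals the length of each half-interval of $z$.

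Next, handle the decreasing case; the increasing case is symmetric. Suppose first that $h((2x_i+1)2^{j-1})$ lies in the lower half. By monotonicity, every $z$ in the right half has $h(z)$ in the lower half. These $2^{j-1}$ values of $z$ produce $2^{j-1}$ values of $z\oplus h(z)$ in the window, and by injectivity they are pairwise distinct, so they fill the window completely. Hence no $z$ in the left half can have $h(z)$ in the upper half, since that would produce a value in the same window and contradict injectivity. So $h(x_i2^j)$ is in the lower half, and by monotonicity every value on the interval lies in the lower half.

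Otherwise $h((2x_i+1)2^{j-1})$ is in the upper half. Then the whole left half maps into the upper half and fills the window. So the right half cannot map into the lower half, $h((x_i+1)2^j-1)$ is in the upper half, and every value on the interval lies in the upper half.

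The main point needing care is the increasing case: the roles of the two halves swap, and the filling argument is run on the opposite quarter. We also have to make sure injectivity is invoked only between points that differ in the $i$-th coordinate with all other coordinates and $m$ held fixed. That is precisely the form in which Lemma~\ref{lemma:innegualitymultid} is stated, so no extra input is needed.
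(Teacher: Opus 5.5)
Your proposal is correct and follows the paper's proof essentially step for step. It uses the same split into quarters, the same shared window $((2x_i\oplus 2y)+1)2^{j-1}\le z\oplus h(z)<((2x_i\oplus 2y)+2)2^{j-1}$ for the (left, upper) and (right, lower) quarters, the same pigeonhole filling argument via Lemma~\ref{lemma:innegualitymultid} in the decreasing case, and the same appeal to symmetry for the increasing case. If you write out the increasing case, it uses the other pair of quarters, (left, lower) and (right, upper), whose common window is $(2x_i\oplus 2y)2^{j-1}\le z\oplus h(z)<((2x_i\oplus 2y)+1)2^{j-1}$; the left half fills this window when $h((2x_i+1)2^{j-1})$ is in the lower half, and the right half fills it otherwise.
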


\begin{proof}
    We take $x_1,\ldots  ,x_k,m,j,i,y \in \mathbb{Z}_{\ge 0}$ such as for all $x_i2^j\le z<(x_i+1)2^j$ we have $$y2^j\le \min(f(x_1,\ldots  ,x_{i-1},z,x_{i+1}\ldots  ,x_k),m)<(y+1)2^j.$$

    If $x_i2^j\le z < (2x_i+1)2^{j-1}$ and $$(2y+1)2^{j-1} \le \min(f(x_1,\ldots  ,x_{i-1},z,x_{i+1}\ldots  ,x_k),m) < (y+1)2^{j},$$ then $$((2x_i\oplus 2y)+1)2^{j-1}\le z\oplus \min(f(x_1,\ldots  ,x_{i-1},z,x_{i+1}\ldots  ,x_k),m)<((2x_i\oplus 2y)+2)2^{j-1}.$$ 
    
    If $(2x_i+1)2^{j-1}\le z < (x_i+1)2^{j}$ and $$y2^{j} \le \min(f(x_1,\ldots  ,x_{i-1},z,x_{i+1}\ldots  ,x_k),m) < (2y+1)2^{j-1},$$ then $$((2x_i\oplus 2y)+1)2^{j-1}\le z\oplus \min(f(x_1,\ldots  ,x_{i-1},z,x_{i+1}\ldots  ,x_k),m)<((2x_i\oplus 2y)+2)2^{j-1}.$$


Without loss of generality, we assume that $f$ is decreasing according to the $i$-th coordinate. For the case $f$ is increasing according to the $i$-th coordinate, we can prove this by using a similar argument. \\

     Suppose that $$\min(f(x_1,\ldots  ,x_{i-1},(2x_i+1)2^{j-1},x_{i+1}\ldots  ,x_k),m)<(2y+1)2^{j-1},$$ then as $f$ is decreasing according to $x_i$ for all $(2x_i+1)2^{j-1}\le z<(x_i+1)2^{j}$, $$y2^{j} \le \min(f(x_1,\ldots  ,x_{i-1},z,x_{i+1}\ldots  ,x_k),m) < (2y+1)2^{j-1}$$ and then $$((2x_i\oplus 2y)+1)2^{j-1}\le z\oplus \min(f(x_1,\ldots  ,x_{i-1},z,x_{i+1}\ldots  ,x_k),m)<((2x_i\oplus 2y)+2)2^{j-1}.$$
     
     However, there are $2^{j-1}$ different values between $((2x_i\oplus 2y)+1)2^{j-1}$ and $((2x_i\oplus 2y)+2)2^{j-1}-1$ and there are also exactly $2^{j-1}$ different possible values for $z$.
     
     As for all $z,z'$, $$z\oplus \min(f(x_1,\ldots  ,x_{i-1},z,x_{i+1}\ldots  ,x_k),m) \ne z'\oplus \min(f(x_1,\ldots  ,x_{i-1},z',x_{i+1}\ldots  ,x_k),m)$$ by Lemma~\ref{lemma:innegualitymultid}, there is no other $z$ such as $$((2x_i\oplus 2y)+1)2^{j-1}\le z\oplus \min(f(x_1,\ldots  ,x_{i-1},z,x_{i+1}\ldots  ,x_k),m)<((2x_i\oplus 2y)+2)2^{j-1}.$$

     Then there is no $z$ such as $x_i2^j\le z < (2x_i+1)2^{j-1}$ and $$(2y+1)2^{j-1} \le \min(f(x_1,\ldots  ,x_{i-1},z,x_{i+1}\ldots  ,x_k),m) < (y+1)2^{j},$$ which means that $f(x_1,\ldots,x_i2^{j},\ldots,x_k)< (2y+1)2^{j-1}$. Then as $f$ is decreasing according to the $i$-th coordinate, for all $x_i2^j\le z'<(x_i+1)2^j$, $$(2y)2^{j-1}\le \min(f(x_1,\ldots  ,x_{i-1},z',x_{i+1}\ldots  ,x_k),m)<(2y+1)2^{j-1}.$$\\

    Suppose that $$\min(f(x_1,\ldots  ,x_{i-1},(2x_i+1)2^{j-1},x_{i+1}\ldots  ,x_k),m)\ge(2y+1)2^{j-1},$$ then as $f$ is decreasing for all $x_i2^j\le z<(2x_i+1)2^{j-1}$, $$(2y+1)2^{j-1} \le \min(f(x_1,\ldots  ,x_{i-1},z,x_{i+1}\ldots  ,x_k),m) < (y+1)2^{j}$$ and then $$((2x_i\oplus 2y)+1)2^{j-1}\le z\oplus \min(f(x_1,\ldots  ,x_{i-1},z,x_{i+1}\ldots  ,x_k),m)<((2x_i\oplus 2y)+2)2^{j-1}.$$
    
    However, there are $2^{j-1}$ different values between $((2x_i\oplus 2y)+1)2^{j-1}$ and $((2x_i\oplus 2y)+2)2^{j-1}-1$ and there are also exactly $2^{j-1}$ different possible values for $z$. 
    
    As for all $z,z'$, $$z\oplus \min(f(x_1,\ldots  ,x_{i-1},z,x_{i+1}\ldots  ,x_k),m) \ne z'\oplus \min(f(x_1,\ldots  ,x_{i-1},z',x_{i+1}\ldots  ,x_k),m)$$ by Lemma~\ref{lemma:innegualitymultid}, there are no other $z$  as $$((2x_i\oplus 2y)+1)2^{j-1}\le z\oplus \min(f(x_1,\ldots  ,x_{i-1},z,x_{i+1}\ldots  ,x_k),m)<((2x_i\oplus 2y)+2)2^{j-1}.$$ 
    
    Then there are no $z$  as $(2x_i+1)2^{j-1}\le z < (x_i+1)2^{j}$ and $$y2^{j} \le \min(f(x_1,\ldots  ,x_{i-1},z,x_{i+1}\ldots  ,x_k),m) < (2y+1)2^{j-1},$$ which means that $f(x_1,\ldots,x_{i-1},(x_i+1)2^{j}-1,x_{i+1},\ldots,x_k)\ge (2y+1)2^{j-1}$. Then as $f$ is decreasing according to the $i$-th coordinate, for all $x_i2^j\le z'<(x_i+1)2^j$, $$(2y+1)2^{j-1}\le \min(f(x_1,\ldots  ,x_{i-1},z',x_{i+1}\ldots  ,x_k),m)<(2y+2)2^{j-1}.$$

\end{proof}

\begin{lemma}
\label{lemma:bornonf(z)multid}
    For any $x_1,\ldots  x_k,i,j,m \in \mathbb{Z}_{\ge 0},$ there exists $y \in \mathbb{Z}_{\ge 0}$ such as for all $x_i2^j\le z<(x_i+1)2^j$, we have $$y2^{j-1}\le \min(f(x_1,\ldots  ,x_{i-1},z,x_{i+1}\ldots  ,x_k),m)<(y+1)2^{j-1}.$$
\end{lemma}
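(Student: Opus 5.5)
We mirror the proof of Lemma~\ref{lemma:bornonf(z)} exactly, fixing $x_1,\ldots,x_{i-1},x_{i+1},\ldots,x_k$, $i$ and $m$, and arguing by decreasing induction on $j$ (for every $x_i$ simultaneously). The only input needed is Lemma~\ref{lemma:divide_by_twomultid}, which plays the role of Lemma~\ref{lemma:divide_by_two}. Throughout, write $h(z)=\min(f(x_1,\ldots,x_{i-1},z,x_{i+1},\ldots,x_k),m)$.

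\textbf{Base case.} Take $j$ large enough that $2^{j-1}>m$, for instance $j>1+\log_2(m)$ (or any $j\ge 2$ if $m=0$). Every value of $h$ lies in $[0,m]\subseteq[0,2^{j-1})$, so $y=0$ works for every $x_i$.

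\textbf{Inductive step.} Assume the statement holds for $j+1$ and for all values of the $i$-th coordinate. Apply it with $\lfloor x_i/2\rfloor$ in place of $x_i$. This gives $y'$ with
\[
y'2^{j}\le h(z)<(y'+1)2^{j}
\]
for all $z$ with $\lfloor x_i/2\rfloor 2^{j+1}\le z<(\lfloor x_i/2\rfloor+1)2^{j+1}$. This interval contains $[x_i2^j,(x_i+1)2^j)$, so the hypothesis of Lemma~\ref{lemma:divide_by_twomultid} holds on that subinterval with $y=y'$. The lemma then gives, for all $z$ in $[x_i2^j,(x_i+1)2^j)$, either
\[
(2y')2^{j-1}\le h(z)<(2y'+1)2^{j-1}
\quad\text{or}\quad
(2y'+1)2^{j-1}\le h(z)<(2y'+2)2^{j-1}.
\]
We therefore take $y=2y'$ or $y=2y'+1$ accordingly.

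\textbf{Subtle point.} Lemma~\ref{lemma:divide_by_twomultid} must be applied with its own parameter equal to $j$, on the block $[x_i2^j,(x_i+1)2^j)$. Its hypothesis requires $h$ to lie in a window of width $2^j$ on that block, and this is exactly what the induction hypothesis at level $j+1$ supplies after restriction. So the indices line up, with the statement at level $j$ bounding $h$ within width $2^{j-1}$. The small cases $j=0,1$, where $2^{j-1}$ is fractional or the halving is degenerate, are covered by reading Lemma~\ref{lemma:divide_by_twomultid} as stated, just as in the one-dimensional proof.

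Finally, monotonicity of $f$ in the $i$-th coordinate is used only through Lemma~\ref{lemma:divide_by_twomultid}. Likewise, the standing assumption that $\CHN$ is tasty enters only through Lemma~\ref{lemma:innegualitymultid}, inside that lemma. No new obstacle arises here; the real work was already done in Lemma~\ref{lemma:divide_by_twomultid}.
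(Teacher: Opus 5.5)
Your proof is correct and is essentially the paper's own argument. It uses decreasing induction on $j$, with base case $y=0$ once $2^{j-1}>m$. The inductive step applies the level-$(j+1)$ statement at $\lfloor x_i/2\rfloor$, restricts it to the block $[x_i2^j,(x_i+1)2^j)$, and then invokes Lemma~\ref{lemma:divide_by_twomultid} with parameter $j$. You also make two things explicit that the paper leaves implicit: the induction must range over all values of the $i$-th coordinate simultaneously, and the case $m=0$ needs its own base.
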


\begin{proof}
    
We take $x_1,\ldots  ,x_k,i,j,m \in \mathbb{Z}_{\ge 0}$.
We prove this by decreasing induction on $j$.

If $j>\log_2(m)+1$, then for all $z$, $$\min(f(x_1,\ldots  ,x_{i-1},z,x_{i+1}\ldots  ,x_k),m)\le m < 2^{j-1}.$$ 

If we take $y=0$, we have for all $z$, $$y2^{j-1}=0\le \min(f(x_1,\ldots  ,x_{i-1},z,x_{i+1}\ldots  ,x_k),m)<1\times2^{j-1}.$$

If $j\le \log_2(m)+1$ and the lemma works for $j+1$,  then there exists $y' \in \mathbb{Z}_{\ge 0}$ such that for all $\left\lfloor \frac{x_i}{2} \right\rfloor 2^{j+1}\le z<(\left\lfloor \frac{x_i}{2} \right\rfloor+1)2^{j+1},$ we have $$y'2^{j}\le \min(f(x_1,\ldots  ,x_{i-1},z,x_{i+1}\ldots  ,x_k),m)<(y'+1)2^{j}.$$  Then by Lemma~\ref{lemma:divide_by_twomultid}, either for all $x_i2^{j}\le z<(x_i+1)2^{j}$, $$(2y')2^{j-1}\le \min(f(x_1,\ldots  ,x_{i-1},z,x_{i+1}\ldots  ,x_k),m)<(2y'+1)2^{j-1}$$ or for all $x_i2^{j}\le z<(x_i+1)2^{j}$, $$(2y'+1)2^{j-1}\le \min(f(x_1,\ldots  ,x_{i-1},z,x_{i+1}\ldots  ,x_k),m)<(2y'+2)2^{j-1}$$ giving in both cases what we want.

By decreasing induction, the lemma holds. 
\end{proof}

\begin{theorem}
\label{th:necessarymultid}
    $f$ satisfies the tasty condition.
\end{theorem}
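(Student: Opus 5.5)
The plan is to mirror the proof of Theorem~\ref{th:necessary}, applied coordinate by coordinate. The tasty condition (Definition~\ref{def:conditionCB}) varies one coordinate $k$ at a time. So fix an index $i$, fixed values $x_1,\ldots,x_{i-1},x_{i+1},\ldots,x_k$, integers $z,z'$, and a positive integer $j$ with $\left\lfloor \frac{z}{2^j}\right\rfloor=\left\lfloor \frac{z'}{2^j}\right\rfloor$. Write $q=\left\lfloor \frac{z}{2^j}\right\rfloor$. We must show
\[
\left\lfloor \frac{f(x_1,\ldots,z,\ldots,x_k)}{2^{j-1}}\right\rfloor=\left\lfloor \frac{f(x_1,\ldots,z',\ldots,x_k)}{2^{j-1}}\right\rfloor .
\]

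First, remove the truncation by $m$. Choose $m$ strictly larger than $\max\{f(x_1,\ldots,x_{i-1},w,x_{i+1},\ldots,x_k)\mid 0\le w<(q+1)2^j\}$. This is a finite maximum, so such an $m$ exists. Then for every $w$ in the block $q2^j\le w<(q+1)2^j$ we have $\min(f(x_1,\ldots,w,\ldots,x_k),m)=f(x_1,\ldots,w,\ldots,x_k)$.

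Next, apply Lemma~\ref{lemma:bornonf(z)multid} with the $i$-th coordinate set to $q$ (the lemma's $x_i$), exponent $j$, and this $m$. It gives a $y$ such that for all $q2^j\le w<(q+1)2^j$,
\[
y2^{j-1}\le f(x_1,\ldots,x_{i-1},w,x_{i+1},\ldots,x_k)<(y+1)2^{j-1}.
\]
Since both $z$ and $z'$ lie in this block, both $\left\lfloor f(\ldots,z,\ldots)/2^{j-1}\right\rfloor$ and $\left\lfloor f(\ldots,z',\ldots)/2^{j-1}\right\rfloor$ equal $y$, which is the required identity.

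All the substance sits in Lemmas~\ref{lemma:innegualitymultid}, \ref{lemma:divide_by_twomultid} and \ref{lemma:bornonf(z)multid}, which may be assumed. Those lemmas use the standing hypotheses: that $g(\CHN(f,\ldots))$ equals the Nim-sum formula, and that $f$ is monotone in every coordinate. The only points needing care are bookkeeping. The lemma's block variable must be $q$, not $z$ itself. The value of $m$ must be chosen after $z,z',i,j$ are fixed, so that the truncation is inactive on the whole block. Beyond that, the argument is a direct instantiation, exactly as in the two-dimensional Theorem~\ref{th:necessary}.
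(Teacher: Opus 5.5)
Your proposal is correct and follows the paper's proof: fix the block index $q=\lfloor z/2^j\rfloor$, choose $m$ large enough that the truncation $\min(\cdot,m)$ is inactive, apply Lemma~\ref{lemma:bornonf(z)multid} to obtain a common $y$, and conclude that $\lfloor f/2^{j-1}\rfloor=y$ at both $z$ and $z'$. Your choice of $m$ above the maximum of $f$ on the block is the right one. The paper writes $m>\min(\ldots)$, which is a slip, because $m$ must exceed both $f(x_1,\ldots,z,\ldots,x_k)$ and $f(x_1,\ldots,z',\ldots,x_k)$ for the truncation to disappear.
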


\begin{proof}


     We take $z,z' \in \mathbb{Z}_{\ge 0} $ and a positive integer $j$ such that $\left\lfloor \frac{z}{2^j} \right\rfloor=\left\lfloor \frac{z'}{2^j} \right\rfloor$.

     We take $x_1,\ldots  ,x_k,i \in \mathbb{Z}_{\ge 0}$ and also take $$m>\min(f(x_1,\ldots  ,x_{i-1},z,x_{i+1}\ldots  ,x_k),f(x_1,\ldots  ,x_{i-1},z',x_{i+1}\ldots  ,x_k)).$$

    By Lemma~\ref{lemma:bornonf(z)multid}, there exists $y$ such as for all $\left\lfloor \frac{z}{2^j} \right\rfloor2^j\le z''<(\left\lfloor \frac{z}{2^j} \right\rfloor+1)2^j$, $y2^{j-1}\le f(z'')<(y+1)2^{j-1}$.
    
    We have $\left\lfloor \frac{z}{2^j} \right\rfloor2^j\le z,z'<(\left\lfloor \frac{z}{2^j} \right\rfloor+1)2^j$ as $\left\lfloor \frac{z}{2^j} \right\rfloor=\left\lfloor \frac{z'}{2^j} \right\rfloor$. 
    Then $$y2^{j-1}\le \min(f(x_1,\ldots  ,x_{i-1},z,x_{i+1}\ldots  ,x_k),m),\min(f(x_1,\ldots  ,x_{i-1},z',x_{i+1}\ldots  ,x_k),m)<(y+1)2^{j-1}$$ and as $$m>\min(f(x_1,\ldots  ,x_{i-1},z,x_{i+1}\ldots  ,x_k),f(x_1,\ldots  ,x_{i-1},z',x_{i+1},\ldots  ,x_k)),$$ $$ \left\lfloor \frac{f(x_1,\ldots  ,x_{i-1},z,x_{i+1},\ldots  ,x_k)}{2^{j-1}} \right\rfloor=\left\lfloor \frac{f(x_1,\ldots  ,x_{i-1},z',x_{i+1},\ldots  ,x_k)}{2^{j-1}} \right\rfloor=y.$$ 
    
\end{proof}

\subsection{Necessary Condition with many functional dimension}

In this subsection, we prove that the tasty condition is a necessary condition to have 
\begin{align*}
    &g(\MCHN(f_1,\ldots  ,f_{\ell},n_1,\ldots  ,n_k,m_1,\ldots  ,m_{\ell})) \\
    =&\min(m_1,f_1(n_1,\ldots  ,n_k))\oplus \cdots   \oplus \min(m_{\ell},f_{\ell}(n_1,\ldots  ,n_k))\oplus n_1\oplus \cdots   \oplus n_k.
\end{align*}

For all this subsection, we take functions $f_j$ monotonous in all dimensions such that for all $n_1,\ldots  ,n_k$,$m_1,\ldots  ,m_{\ell}$ in $\mathbb{Z}_{\ge 0}$,  
\begin{align*}
    &g(\MCHN(f,n_1,\ldots  ,n_k,m_1,\ldots  ,m_{\ell})) \\
    =&\min(m_1,f_1(n_1,\ldots  ,n_k))\oplus \cdots   \oplus \min(m_{\ell},f_{\ell}(n_1,\ldots  ,n_k))\oplus n_1\oplus \cdots   \oplus n_k .
\end{align*}

\begin{theorem}
\label{th:necessarymultif}
    For any $j\le {\ell},$ $f_j$ satisfies the tasty condition.
\end{theorem}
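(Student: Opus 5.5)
The plan is to reduce to the single-function case of Theorem~\ref{th:necessarymultid}, as the subsection introduction suggests. Fix $j\le \ell$. We want a specialisation of the Multiple Changing Head Nim in which every head except the $j$-th is frozen at $0$. The game should then coincide with $\CHN(f_j,n_1,\ldots,n_k,m_j)$.

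Concretely, set $m_{j'}=0$ for all $j'\ne j$. Then $\min(m_{j'},f_{j'}(n_1,\ldots,n_k))=0$ for every choice of $n_1,\ldots,n_k$, so these piles stay empty forever. Every move on them is unavailable, because there is no $m'_{j'}<0$. Moves on the piles $n_i$ change only the $j$-th head, via $\min(m_j,f_j(\cdot))$. The game tree of $\MCHN(f_1,\ldots,f_\ell,n_1,\ldots,n_k,0,\ldots,m_j,\ldots,0)$ is therefore isomorphic to that of $\CHN(f_j,n_1,\ldots,n_k,m_j)$. The isomorphism is position by position, so the Sprague--Grundy values agree.

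The hypothesis then gives, for all $n_1,\ldots,n_k,m_j$,
\[
g(\CHN(f_j,n_1,\ldots,n_k,m_j))=\min(m_j,f_j(n_1,\ldots,n_k))\oplus n_1\oplus\cdots\oplus n_k ,
\]
since each zero term vanishes from the Nim--sum. Moreover $f_j$ is monotonous according to all dimensions. So $f_j$ meets exactly the standing assumptions of the previous subsection, and Theorem~\ref{th:necessarymultid} applied to $f=f_j$ shows that $f_j$ satisfies the tasty condition.

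The only step needing care is the isomorphism claim. I will check it directly from the definition of options used in the proof of Theorem~\ref{th:sufficientmultif}. The options of the $\MCHN$ position are obtained by decreasing some $n_i$, or by decreasing some head $m_{j'}$ to a value below $\min(m_{j'},f_{j'}(n_1,\ldots,n_k))$. With $m_{j'}=0$ for $j'\ne j$, these options are exactly the options of the $\CHN$ position, with the frozen zeros attached. The correspondence also preserves the parameter structure, so induction on the game tree yields equal Grundy values. No genuine obstacle is expected.
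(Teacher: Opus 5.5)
Your proposal is correct and follows the paper's own proof: you set $m_{j'}=0$ for all $j'\neq j$, which reduces the game to $\CHN(f_j,n_1,\ldots,n_k,m_j)$, and then you invoke Theorem~\ref{th:necessarymultid}. Your explicit check that the option sets correspond, and that $f_j$ meets the standing hypotheses, simply spells out what the paper's one-line argument leaves implicit.
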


\begin{proof}
    If we take $m_{j'}=0$ for $j' \ne j$ then by Theorem~\ref{th:necessarymultid}, $f_j$ satisfies the tasty condition.
    
\end{proof}

\begin{theorem}
\label{th:necessary_and_sufficientMCHN}
For all $x_1,\ldots  ,x_k,y_1,\ldots  ,y_{\ell}$ in $\mathbb{Z}_{\ge 0}$, $$g(\MCHN(f_1,\ldots  ,f_{\ell}, x_1,\ldots  ,x_k,y_1,\ldots  ,y_{\ell}))=g(\MRCG(x_1,\ldots  ,x_k,y_1,\ldots  ,y_{\ell}))$$ if $f_1,\ldots,f_\ell$ satisfy the tasty condition.

    If all $f_j$ are monotonous according to all dimensions, $$g(\MCHN(f_1,\ldots  ,f_{\ell}, x_1,\ldots  ,x_k,y_1,\ldots  ,y_{\ell}))=g(\MRCG(x_1,\ldots  ,x_k,y_1,\ldots  ,y_{\ell}))$$ for all $x_1,\ldots  ,x_k,y_1,\ldots  ,y_{\ell}$ in $\mathbb{Z}_{\ge 0}$ if and only if $f_1,\ldots,f_\ell$ satisfy the tasty condition.
\end{theorem}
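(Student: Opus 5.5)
The statement is essentially a packaging of the two halves proved in the preceding subsections, in the same way that Theorem~\ref{th:necessary_and_sufficientCB} packages Theorems~\ref{th:sufficient} and~\ref{th:necessary}. The plan is to derive the sufficiency part from Theorem~\ref{th:sufficientmultif} and the necessity part from Theorem~\ref{th:necessarymultif}, after noting that the right-hand side $g(\MRCG(\cdot))$ is meant as the Nim--sum of the effective pile sizes. That is, $x_1\oplus\cdots\oplus x_k\oplus\min(y_1,f_1(x_1,\ldots,x_k))\oplus\cdots\oplus\min(y_\ell,f_\ell(x_1,\ldots,x_k))$, which is the value of the Nim position whose piles are the current sizes; this matches the tasty identity of Definition~\ref{def:tastyhigher}.

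For the first claim, assume $f_1,\ldots,f_\ell$ satisfy the tasty condition. Theorem~\ref{th:sufficientmultif} then gives, for every $x_1,\ldots,x_k,y_1,\ldots,y_\ell$, that the Sprague--Grundy value of $\MCHN(f_1,\ldots,f_\ell,x_1,\ldots,x_k,y_1,\ldots,y_\ell)$ equals the Nim--sum above. Since $n$-pile Nim has value equal to the Nim--sum of its piles, this is exactly $g(\MRCG)$ of the corresponding position. No monotonicity is needed here, because the proof of Theorem~\ref{th:sufficientmultif} (via Lemma~\ref{lemma:equality_f(x)f(y)multif}) used only the tasty condition.

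For the second claim, the ``if'' direction is the first claim. For ``only if'', assume the $f_j$ are monotonous in all dimensions and the identity holds for all arguments. These are precisely the standing hypotheses of the subsection containing Theorem~\ref{th:necessarymultif}, which yields that each $f_j$ satisfies the tasty condition. Its proof specializes $y_{j'}=0$ for $j'\ne j$, which reduces to $\CHN(f_j,\ldots)$, and then invokes Theorem~\ref{th:necessarymultid}.

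The only delicate point is in that reduction. When $y_{j'}=0$, the piles $\min(0,f_{j'})=0$ are frozen, so the game is genuinely isomorphic to $\CHN(f_j,x_1,\ldots,x_k,y_j)$. I would verify this explicitly so that the hypothesis of Lemma~\ref{lemma:innegualitymultid} (the move-comparison argument) is legitimately available for $f_j$ alone. Once that is checked, the theorem follows by combining the two directions.
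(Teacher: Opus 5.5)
Your proposal is correct and follows the paper's own proof: sufficiency comes from Theorem~\ref{th:sufficientmultif}, and necessity comes from Theorem~\ref{th:necessarymultif}, which reduces to Theorem~\ref{th:necessarymultid} by freezing $y_{j'}=0$ for $j'\neq j$. Your remark that $g(\MRCG(\cdot))$ must be read with the effective head sizes $\min(y_j,f_j(x_1,\ldots,x_k))$ is a worthwhile clarification: the literal reading with $y_j$ fails, e.g.\ for $f_1\equiv 0$, and the effective-size reading is the one the paper's proof of Theorem~\ref{th:sufficientmultif} actually establishes.
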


\begin{proof}
    By Theorem~\ref{th:necessarymultif}, if $f_1,\ldots,f_\ell$ are monotonous according to all dimensions, the tasty condition is a necessary condition to have for all $x_1,\ldots  ,x_k,y_1,\ldots  ,y_{\ell}\in \mathbb{Z}_{\ge 0}$, $$g(\MCHN(f_1,\ldots  ,f_{\ell}, x_1,\ldots  ,x_k,y_1,\ldots  ,y_{\ell}))=g(\MRCG(x_1,\ldots  ,x_k,y_1,\ldots  ,y_{\ell})).$$

    By Theorem~\ref{th:sufficientmultif}, the tasty condition is a sufficient condition to have for all $x_1,\ldots  ,x_k,y_1,\ldots  ,y_{\ell}\in \mathbb{Z}_{\ge 0}$, $$g(\MCHN(f_1,\ldots  ,f_{\ell}, x_1,\ldots  ,x_k,y_1,\ldots  ,y_{\ell}))=g(\MRCG(x_1,\ldots  ,x_k,y_1,\ldots  ,y_{\ell})).$$ 
\end{proof}


Finally, we extend Theorem \ref{thm:fitting} to  multidimensional cases.

\begin{theorem}
    For all monotonous functions $g,f_2,\ldots f_\ell$  according to all the dimensions, let $$G_{x_1,\ldots,x_k}(z_1,\ldots, z_k)=2^{\left\lceil \log_2 \left(\max_{x_i'\le x_i}(g(x'_1,\ldots ,x'_k)) \right)\right\rceil}-g(z_1,\ldots, z_k)-1.$$ If $\MCHN(g,f_2,\ldots f_\ell,x_1,\ldots,x_k,y_1,\ldots,y_{\ell})$ is tasty for all nonnegative integers $x_1,\ldots,x_k,y_1,\ldots,y_{\ell}$, then $\MCHN(G_{x_1,\ldots,x_k},f_2,\ldots f_\ell,x_1,\ldots,x_k,y_1,\ldots,y_{\ell})$ is tasty for all nonnegative integers  $x_1,\ldots,x_k,y_1,\ldots,y_{\ell}$.
\end{theorem}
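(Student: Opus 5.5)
We follow the proof of Theorem~\ref{thm:fitting}, with Theorem~\ref{th:necessary_and_sufficientMCHN} in place of Theorem~\ref{th:necessary_and_sufficientCB}. Fix $x_1,\ldots,x_k$ and put
\[
M=\max_{x_i'\le x_i} g(x'_1,\ldots,x'_k), \qquad K=\lceil \log_2 M\rceil .
\]
We will interpret $2^K$ so that it exceeds $M$; if $M$ is a power of two, take $K$ one larger, or note that $2^K-1\ge M$ is all that is needed.

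The first step extracts the tasty condition. Since $\MCHN(g,f_2,\ldots,f_\ell,\ldots)$ is tasty for all arguments and all the functions are monotonous in every dimension, the necessity direction (Theorem~\ref{th:necessarymultif}, via Theorem~\ref{th:necessary_and_sufficientMCHN}) shows that $g,f_2,\ldots,f_\ell$ all satisfy the tasty condition.

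The second step builds a bounded version of $g$ and complements it. Let
\[
g'(z_1,\ldots,z_k)=\min\bigl(g(z_1,\ldots,z_k),\,M\bigr).
\]
By Theorem~\ref{theorem:csttasty} and Theorem~\ref{theorem:mintasty}, applied coordinatewise (the proofs only ever vary one coordinate, so they carry over verbatim), $g'$ satisfies the tasty condition and is bounded by $2^K-1$. The multidimensional analogue of Theorem~\ref{theorem:bounded_fitting}, that is, Theorem~\ref{theorem:nimsumtasty} with the constant $2^K-1$ together with the identity $(2^K-1)\oplus g'=2^K-1-g'$, then shows that
\[
G'=2^K-1-g'
\]
satisfies the tasty condition. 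The sufficiency part of Theorem~\ref{th:necessary_and_sufficientMCHN} (Theorem~\ref{th:sufficientmultif}) now gives that $\MCHN(G',f_2,\ldots,f_\ell,x'_1,\ldots,x'_k,y_1,\ldots,y_\ell)$ is tasty for all arguments.

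The third step transfers the result to $G_{x_1,\ldots,x_k}$. Tastiness of $\MCHN(\cdot,x_1,\ldots,x_k,\ldots)$ only involves positions with $x'_i\le x_i$, and the game only evaluates the functions at such points. On these points $g\le M$, so $g'=g$ and $G'=G_{x_1,\ldots,x_k}$. The two games therefore have identical position sets, move sets and claimed values, so tastiness transfers.

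The main obstacle is bookkeeping rather than mathematics. First, $G_{x_1,\ldots,x_k}$ depends on $x_1,\ldots,x_k$, so the claim "for all $x_i$" must be read as: for each fixed $(x_1,\ldots,x_k)$, the bar indexed by that tuple is tasty. Second, $2^{\lceil\log_2 M\rceil}-1$ can be smaller than $M$ when $M$ is a power of two, which would make $G$ negative. One resolves this by reading the exponent so that $2^K>M$; the same subtlety is already implicit in Theorem~\ref{thm:fitting}. Finally, $G$ is monotonous in every dimension since $g$ is, although monotonicity is not even needed for sufficiency.
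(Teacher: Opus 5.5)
Your proposal is correct and follows the paper's proof step for step. You extract the tasty condition for $g,f_2,\ldots,f_\ell$ via the necessity direction, truncate $g$ at $M$ and complement it against $2^K-1$ using Theorems~\ref{theorem:mintasty} and~\ref{theorem:bounded_fitting}, apply sufficiency, and note that the result agrees with $G_{x_1,\ldots,x_k}$ on the box $x'_i\le x_i$. Your remark that $2^{\lceil\log_2 M\rceil}-1<M$ when $M$ is a power of two (and that the exponent is undefined for $M=0$) points to a real defect that the paper's proof, like that of Theorem~\ref{thm:fitting}, passes over; reading the exponent as e.g.\ $\lceil\log_2(M+1)\rceil$ is the right fix.
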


\begin{proof}
    If for all $x_1,\ldots,x_k,y_1,\ldots,y_{\ell}$ in $\mathbb{Z}_{\ge 0}$, $\MCHN(g,f_2,\ldots f_\ell,x_1,\ldots,x_k,y_1,\ldots,y_{\ell})$ is tasty then by Theorem~\ref{th:necessary_and_sufficientMCHN}, $g,f_1,\ldots,f_\ell$ satisfy the tasty condition.

    Let $g'_{x_1,\ldots,x_k}(z_1,\ldots,z_k)=\min(g(z_1,\ldots, z_k),\max_{x_i'\le x_i}(g(x'_1,\ldots ,x'_k)))$ and $$G'_{x_1,\ldots,x_k}(z_1,\ldots,z_k)=2^{\left\lceil \log_2 \left(\max_{x_i'\le x_i}(g'_{x_1,\ldots,x_k}(x'_1,\ldots ,x'_k)) \right)\right\rceil}-g'_{x_1,\ldots,x_k}(z_1,\ldots, z_k)-1$$ then $g'$ is bounded and by Theorem~\ref{theorem:mintasty} satisfies the tasty condition and by Theorem~\ref{theorem:bounded_fitting}, $G'_x$ satisfies the tasty condition too.
    
    By Theorem~\ref{th:necessary_and_sufficientMCHN}, $\MCHN(G'_{x_1,\ldots,x_k},f_2,\ldots f_\ell,x_1,\ldots,x_k,y_1,\ldots,y_{\ell})$  is tasty but for $z_1\le x_1,\ldots,z_k \le x_k$, $$G'_{x_1,\ldots,x_k}(z_1,\ldots, z_k)=G_{x_1,\ldots,x_k}(z_1,\ldots, z_k)$$ and then $\MCHN(G_{x_1,\ldots,x_k},f_2,\ldots f_\ell,x_1,\ldots,x_k,y_1,\ldots,y_{\ell})$ is tasty.
\end{proof}

\section*{Acknowledgments}

The authors are deeply grateful to Professor Takeaki Uno for providing this valuable research opportunity and an excellent environment in which to conduct the research.
This work is partially
supported by The INOUE ENRYO Memorial Grant, TOYO
University.


\end{document}